\numberwithin{equation}{section}
\newtheorem{theorem}{Theorem}[section]
\newtheorem{lemma}{Lemma}[section]
\newtheorem{corollary}{Corollary}[section]
\newtheorem{prop}{Proposition}[section]
\theoremstyle{definition}
\newtheorem{definition}{Definition}[section]
\theoremstyle{remark}
\newtheorem*{remark}{Remark}
\newcommand{\rchi}{\protect\raisebox{2pt}{$ \chi $}}
\newcommand{\ov}{\overline{v}}
\newcommand{\ow}{\overline{w}}
\newcommand{\oF}{\overline{F}}
\newcommand{\oFF}{\overline{\mathbb{F}}}
\newcommand{\ol}{\overline{\lambda}}
\newcommand{\tFF}{\tilde{\mathbb{F}}}
\newcommand{\FF}{\mathbb{F}}
\newcommand{\KK}{\mathbb{K}}
\newcommand{\eps}{\varepsilon}
\newcommand\pef[1]{(\ref{#1})}
\title{On a kinetic equation describing the behavior of a gas interacting mainly with radiation}
\author{Elena Demattè}
\begin{document}

\maketitle

\begin{abstract}
In this article we study a kinetic model which describes the interaction between a gas and radiation. Specifically, we consider a scaling limit in which the interaction between the gas and the photons takes place much faster than the collisions between the gas molecules themselves. We prove in the homogeneous case that the solutions of the limit problem solve a kinetic equation for which a well-posedness theory is considered. The proof of the convergence to a new kinetic equation is obtained analyzing the dynamics of the gas-photon system near the slow manifold of steady states.\\

\textbf{Acknowledgments:} The author thanks Juan J.L. Velázquez for the suggestion of
the problem and helpful discussions. The author has been supported by the Deutsche Forschungsgemeinschaft (DFG, German Research Foundation) under Germany’s Excellence Strategy – EXC-2047/1 – 390685813 as a member of the Bonn International Graduate School of Mathematics (BIGS). Furthermore, the work was supported by the
DFG through the collaborative research centre \textit{The mathematics of emerging effects} (CRC 1060, Project-ID 211504053). 
\end{abstract}
\tableofcontents
\section{Introduction}
In this paper we study a kinetic model that describes the interaction between the molecules of a gas and radiation. In the considered model it is assumed that the gas molecules can be in a ground and in an excited state. The transition between these two states takes place either due to nonelastic collisions between two molecules in the ground state, or by the absorption and emission of photons. The evolution of the radiation density is described also by means of a kinetic equation, specifically the radiative transfer equation in which the emission and absorption by the gas molecules are included. In spite of its simplicity the model provides valuable insights about the behavior of radiative gases.

We will denote as $ F^1=F^1(t,v) $ the density in the phase space of gas molecules in the ground state, and as $ F^2=F^2(t,v) $ the density of gas molecules in the excited state. We consider in this paper only homogeneous distributions in space (due to that $ F^1 $ and $ F^2 $ are independent of $ x $).

The most peculiar feature of the model considered in this paper is that it is assumed that the fastest kinetic process is the interaction between the photons and the gas molecules. As a consequence we have the following relation
\begin{equation}\label{introduction1}
F^2=\lambda F^1,
\end{equation} 
where $ \lambda=\lambda(t) $ is a function that depends on the intensity of radiation, which in fact depends on the initial amount of radiation present on the system. In the limit described above, which from now on will be referred to as the fast radiation limit, we will obtain that \pef{introduction1} holds approximately during most of the evolution. We will derive also equations describing the dynamic of $ \lambda(t)$ and $ F^1(t,v) $ which change in the time scale of the mean free time between the collisions of the gas.

\subsection{The model}
The model considered in the paper, which has been already introduced in \cite{rossanipolew,rossanispiga, paper}, is a simple model of gas-photon interaction. In this model it is assumed that the gas molecules can be in two states, namely the ground state $ A $ and the excited state $ \overline{A} $. The difference of energy between these states is given by $ \eps_0=h\nu_0 $, where $ \nu_0 $ is the frequency of the photons $ \gamma $. The radiative process in the system is assumed to be monochromatic. In particular all the effect yielding frequency dispersion like Doppler effect and line width are neglected. The considered system is also open with the exterior to the exchange of photons but not to the exchange of gas molecules. This is for example the situation of stellar atmospheres. 


In our model we only deal with three possible types of interaction: elastic collisions, nonelastic collisions and radiative processes. In this paper we neglect all the possible processes which involve three or more particles. 

The elastic collision is one of the following processes
\begin{equation}\label{elcoll}
\begin{split}
A\;+\;A\;&\rightleftarrows\;A\;+\;A,\\
A\;+\;\overline{A}\;&\rightleftarrows\;A\;+\;\overline{A},\\
\overline{A}\;+\;\overline{A}\;&\rightleftarrows\;\overline{A}\;+\;\overline{A}.\\
\end{split}
\end{equation}
The nonelastic collisions are only between ground state gas molecules, which colliding lose kinetic energy. This is absorbed by an electron yielding one of the molecule to be in the excited state. They can be schematically represented by
\begin{equation}\label{nonelcoll}
A\;+\;A\;\rightleftarrows\;A\;+\;\overline{A}.
\end{equation}
Finally, the radiative processes are only of the following ones
\begin{equation}\label{rad}
A\;+\;\gamma\;\rightleftarrows\;\overline{A}.
\end{equation}
In this case the absorption of a photon by a ground state molecule creates an excited state molecule. Also the emission (spontaneous or stimulated) of a photon due to the de-excitation of a molecule might happen. This is the so called bound-bound transition as explained in \cite{mihalas, oxenius, Rutten}. We neglect the momentum of the photons in all the interactions. Moreover, we assume that for every interaction the inverse process is also possible.

Elastic collisions as in \pef{elcoll} are characterized by the relation between pre-collisional velocities ($ v_1 $ and $ v_2 $) and post-collisional velocities ($ v_3 $ and $ v_4 $). This is given by the conservation of momentum and of kinetic energy
\begin{equation}\label{elasticrel1}
v_1+v_2=v_3+v_4\;\;\;\;\;\;\;\;\;\;\text{ and }\;\;\;\;\;\;\;\;\;\;\;\arrowvert v_1\arrowvert^2+\arrowvert v_2\arrowvert^2=\arrowvert v_3\arrowvert^2+\arrowvert v_4\arrowvert^2.
\end{equation}
Therefore we conclude also
\begin{equation}\label{elasticrel2}
v_{3,4}=\frac{v_1+v_2}{2}\pm\frac{\arrowvert v_1-v_2\arrowvert}{2} \omega \;\;\;\;\;\;\;\;\text{ for }\omega\in\mathbb{S}^2.
\end{equation}

Similarly, nonelastic collisions as in \pef{nonelcoll} are also characterized by the relation between pre-collisional velocities ($ \ov_1 $ and $ \ov_2 $) and post-collisional velocities ($ \ov_3 $ and $ \ov_4 $). From the conservation of momentum and total energy we compute
\begin{equation}\label{nonelasticrel1}
\ov_1+\ov_2=\ov_3+\ov_4\;\;\;\;\;\;\;\;\;\;\text{ and }\;\;\;\;\;\;\;\;\;\;\;\arrowvert \ov_1\arrowvert^2+\arrowvert \ov_2\arrowvert^2=\arrowvert \ov_3\arrowvert^2+\arrowvert \ov_4\arrowvert^2+2\varepsilon_0.
\end{equation}
and therefore the relation between pre- and post-collisional velocities is given by
\begin{equation}\label{nonelasticrel2}
\ov_{3,4}=\frac{\ov_1+\ov_2}{2}\pm\omega\sqrt{\frac{\arrowvert \ov_1-\ov_2\arrowvert^2}{4}-\varepsilon_0}  \;\;\;\;\;\;\;\;\text{ for }\omega\in\mathbb{S}^2
\end{equation}
and 
\begin{equation}\label{nonelasticrel3}
\ov_{1,2}=\frac{\ov_3+\ov_4}{2}\pm\omega\sqrt{\frac{\arrowvert \ov_3-\ov_4\arrowvert^2}{4}+\varepsilon_0}  \;\;\;\;\;\;\;\;\text{ for }\omega\in\mathbb{S}^2.
\end{equation}

We have already introduced the gas density functions. The behavior of the photons is usually described by the intensity of radiation $ I(t,x,n) $, where $ n\in\mathbb{S}^2 $. This is the energy flux due to the radiative process. However, for the description of this model we will use the photon number density $ Q(t,x,n) $, which is the number of photons per unit volume per unit frequency. This latter function is related to the former one by the identity $ I= ch\nu_0 Q $ (cf. \cite{oxenius}), where $ c $ is the speed of light and $ h $ the Planck constant.  

In this paper we will restrict ourselves to the homogeneous problem only, so $ F^i $ and $ Q $ would be independent of $ x $. 
We combine the kinetic equation for the elastic and nonelastic collisions, which involve several Boltzmann operators, with the radiative transfer equation. According to paper \cite{paper} in the homogeneous case the following system of kinetic equation is then obtained, which describes the behavior of the gas densities and the photons in the model under consideration.
\begin{equation}\label{kinsystem}
\begin{split}
\frac{\partial F^1 (v)}{\partial t} = & \sum_{i=1}^{2} \int_{\mathbb{R}^3} dv_2 \int_{\mathbb{S}^2} d\omega\; B_{el}(v,v_2)\left(F^1(v_3)F^i(v_4)-F^1(v)F^i(v_2)\right)\\
\;\; & +2\int_{\mathbb{R}^3} d\ov_2 \int_{\mathbb{S}^2} d\omega\; B_{ne}^{12}(v,\ov_2)\left(F^2(\ov_3)F^1(\ov_4)-F^1(v)F^1(\ov_2)\right)\\
\;\; & +\int_{\mathbb{R}^3} d\ov_3 \int_{\mathbb{S}^2} d\omega\; B_{ne}^{34}(\ov_3,v)\left(F^1(\ov_1)F^1(\ov_2)-F^2(\ov_3)F^1(v)\right)\\
\;\; & + \int_{\mathbb{S}^2} dn\; A_0 F^2(v)+B_0 Q(n)\left(F^2(v)-F^1(v) \right)\\
\frac{\partial F^2 (v)}{\partial t} = & \sum_{i=1}^{2} \int_{\mathbb{R}^3} dv_2 \int_{\mathbb{S}^2} d\omega\; B_{el}(v,v_2)\left(F^2(v_3)F^i(v_4)-F^2(v)F^i(v_2)\right)\\
\;\; & +\int_{\mathbb{R}^3} d\ov_4 \int_{\mathbb{S}^2} d\omega\; B_{ne}^{34}(v,\ov_4)\left(F^1(\ov_1)F^1(\ov_2)-F^2(v)F^1(\ov_4)\right)\\
\;\; & - \int_{\mathbb{S}^2} dn\; A_0 F^2(v)+B_0 Q(n)\left(F^2(v)-F^1(v) \right)\\
\frac{\partial Q (n)}{\partial t} =& \int_{\mathbb{R}^3} dv\; A_0 F^2(v)+B_0 Q(n)\left(F^2(v)-F^1(v) \right)\\
\end{split}
\end{equation}
We define $ \mathbb{F}:= (F^1,F^2,Q) $ and we consider $ F^i(t,v)\geq 0 $, $ Q(t,n)\geq 0 $. Moreover, the collision kernels in the case of hard spheres are given by
\begin{equation}\label{kernels}
\begin{split}
B_{el}(v_1,v_2):=&\arrowvert v_1-v_2\arrowvert,\\
B_{ne}^{12}(\ov_1,\ov_2):=&C_0 \frac{\sqrt{\arrowvert \ov_1-\ov_2\arrowvert^2-4\varepsilon_0}}{2\arrowvert \ov_1-\ov_2\arrowvert}\arrowvert \ov_1-\ov_2\arrowvert,\\
B_{ne}^{34}(\ov_3,\ov_4):=&C_0 \frac{\sqrt{\arrowvert \ov_3-\ov_4\arrowvert^2+4\varepsilon_0}}{2\arrowvert \ov_3-\ov_4\arrowvert}\arrowvert\ov_3-\ov_4\arrowvert.\\
\end{split}
\end{equation}
For their full derivation we refer to \cite{paper}.

The constants in the radiative terms are defined as $ A_0=\frac{B_{12} 2 \varepsilon_0 \nu_0^2}{4\pi c^2}\geq 0 $ and $ B_0= \frac{B_{12} c \varepsilon_0}{4\pi}\geq 0 $, where $ B_{12} $ is the Einstein's absorption coefficient. For a detailed derivation of the radiative transfer equation we refer to \cite{mihalas,oxenius,Rutten}.\\

\begin{remark}
	It is well known that
	\begin{equation}\label{usual estimate 3}
	\arrowvert v-w\arrowvert\leq (1+\arrowvert v\arrowvert^2)^{1/2}(1+\arrowvert w\arrowvert^2)^{1/2}.
	\end{equation}
	This implies the following bounds for the kernels for the nonelastic collisions
	\begin{equation}\label{usual estimate}
	\begin{split}
	B_{ne}^{12}(\ov_1,\ov_2)\leq&C_0 \frac{\sqrt{\arrowvert \ov_1-\ov_2\arrowvert^2+4\varepsilon_0}}{2}\\
	\leq&C_0 \frac{\arrowvert \ov_1-\ov_2\arrowvert+2\sqrt{\varepsilon_0}}{2}\\
	\leq& \frac{C_0}{2}(1+2\sqrt{\varepsilon_0})(1+\arrowvert \ov_1\arrowvert^2)^{1/2}(1+\arrowvert \ov_2\arrowvert^2)^{1/2}, \\
	\end{split}
	\end{equation}
	\begin{equation}\label{usual estimate 2}
	B_{ne}^{34}(\ov_3,\ov_4)\leq\frac{C_0}{2}(1+2\sqrt{\varepsilon_0})(1+\arrowvert \ov_3\arrowvert^2)^{1/2}(1+\arrowvert \ov_4\arrowvert^2)^{1/2}.
	\end{equation}
	\\
\end{remark}
\begin{remark}
	In this paper we assume that the interaction of the gas molecules is described by means of hard spheres. We can obtain the same results also in the case of more general hard potentials. In that case instead of $ \arrowvert v_1-v_2\arrowvert $ we consider for the elastic kernel the function $ b(\cos \theta)\arrowvert v_1-v_2\arrowvert^\gamma $ for $ \gamma\in(0,2] $ and for the nonelastic kernels expressions of the form
	\begin{equation*}
	B_{ne}(\ov,\ow):=C_0 \frac{\sqrt{\arrowvert \ov-\ow\arrowvert^2\pm4\varepsilon_0}}{2\arrowvert \ov-\ow\arrowvert}b(\cos \theta)\arrowvert \ov-\ow\arrowvert^\gamma,
	\end{equation*}
	where $ \theta $ is the angle between the difference of pre- and post-collisional velocities.
\end{remark}
In order to simplify the reading we introduce the following notation
\begin{equation}\label{coll1}
\begin{split}
\mathbb{K}_1 \left[\FF,\FF\right](v)=& \sum_{i=1}^{2} \int_{\mathbb{R}^3} dv_2 \int_{\mathbb{S}^2} d\omega\; B_{el}(v,v_2)\left(F^1(v_3)F^i(v_4)-F^1(v)F^i(v_2)\right)\\
\;\; & +2\int_{\mathbb{R}^3} d\ov_2 \int_{\mathbb{S}^2} d\omega\; B_{ne}^{12}(v,\ov_2)\left(F^2(\ov_3)F^1(\ov_4)-F^1(v)F^1(\ov_2)\right)\\
\;\; & +\int_{\mathbb{R}^3} d\ov_3 \int_{\mathbb{S}^2} d\omega\; B_{ne}^{34}(\ov_3,v)\left(F^1(\ov_1)F^1(\ov_2)-F^2(\ov_3)F^1(v)\right)\\
\end{split}
\end{equation}
and similarly
\begin{equation}\label{coll2}
\begin{split}
\mathbb{K}_2\left[\FF,\FF\right](v)=&\sum_{i=1}^{2} \int_{\mathbb{R}^3} dv_2 \int_{\mathbb{S}^2} d\omega\; B_{el}(v,v_2)\left(F^2(v_3)F^i(v_4)-F^2(v)F^i(v_2)\right)\\
\;\; & +\int_{\mathbb{R}^3} d\ov_4 \int_{\mathbb{S}^2} d\omega\; B_{ne}^{34}(v,\ov_4)\left(F^1(\ov_1)F^1(\ov_2)-F^2(v)F^1(\ov_4)\right).\\
\end{split}
\end{equation}
Moreover we define the following operators
\begin{equation}\label{KR}
\mathbb{K}[\FF,\FF]:=\begin{pmatrix}\mathbb{K}_1[\FF,\FF]\\\mathbb{K}_2[\FF,\FF]\\0\end{pmatrix}\;\;\;\;\;\text{ and }\;\;\;\;\;\mathcal{R}[\FF]=\begin{pmatrix}\int_{\mathbb{S}^2} dn\; A_0 F^2(v)+B_0 Q(n)\left(F^2(v)-F^1(v) \right)\\-\int_{\mathbb{S}^2} dn\; A_0 F^2(v)+B_0 Q(n)\left(F^2(v)-F^1(v) \right)\\\int_{\mathbb{R}^3} dv\; A_0 F^2(v)+B_0 Q(n)\left(F^2(v)-F^1(v) \right)\end{pmatrix}
\end{equation}
so that the kinetic system can be written as 
\begin{equation}\label{compactequation}
\partial_t\FF(t,v)=\KK[\FF,\FF](t,v)+\mathcal{R}[\FF](t,v).
\end{equation}
In this paper we focus on the setting, when the gas is so much rarefied, that the collisions between gas molecules are much less frequent if compared to the radiative processes. We impose the following three conditions
\begin{equation}\label{conditions}
\begin{split}
(1)&\:\: m\arrowvert v\arrowvert^2 \approx h\nu_0,\\
(2)& \:\:Q\approx \frac{2\nu^2}{c^3},
\\
(3)& \:\:\alpha \int_{\mathbb{R}^3} \left(F^1+F^2 \right)\;dv\ll B_{12}\frac{2h\nu_0^3}{c^2},
\end{split}
\end{equation}
where $ \alpha $ is a parameter which measures the order of magnitude for the cross section for the collisions of the gas particles. With the relation $ \approx $ we mean that the quantities are of the same order. 
 
Condition $ (1) $ in \pef{conditions} means that the kinetic energy of the particles is comparable to the energy of the photons. Due to this assumption the Boltzmann ratio $ e^{-\frac{2\eps_0}{K_B T}} $ is of order one. 

The physical meaning of $ (2) $ is that for the photon density in our model the number of occupied quantum states is of the same order of available quantum states. It indicates that we need to use the Bose-Einstein quantum statistic. In order to give a more mathematical meaning of condition $ (2) $, we recall the form of the radiative transfer equation in both the terms for the evolution of the density of gas molecules and of the photons. This is given by
\begin{equation}\label{radiation}
\begin{split}
\partial_t F^i(t,v)&= \mathbb{K}_i\left[\FF,\FF\right] \pm \int_{0}^\infty \int_{\mathbb{S}^2} \frac{B_{12}}{4\pi}\delta(\nu-\nu_0)\left(\frac{2h\nu^3}{c^2}F^2\left(1+\frac{c^3}{2\nu^2} Q \right)-ch\nu Q F^1\right)\;dn\;d\nu\\
\partial_t Q(t,n,\nu)&=\int_{\mathbb{R}^3}\frac{B_{12}}{4\pi}\left(\frac{2h\nu^3}{c^2}F^2\left(1+\frac{c^3}{2\nu^2} Q \right)-ch\nu Q F^1\right)dv
\end{split}
\end{equation}
Imposing on $ Q $ the second condition yields the fact that both terms in $ \left(1+\frac{c^3}{2\nu^2} Q \right) $ have the same order of magnitude. The term $ 1 $ is associated to the spontaneous emission, while $ \frac{c^3}{2\nu^2} Q $ is related to the stimulated emission process. Assumption $ (2) $ means that both gas molecule densities $ F^1 $ and $ F^2 $ contribute equally to the radiative process. Most likely this assumption is not needed in order to derive the model but it simplifies the third and most important assumption in \pef{conditions}. In a situation where $ Q\ll \frac{2\nu^2}{c^3} $ the spontaneous radiation would be the dominant contribution, while whenever $ Q\gg \frac{2\nu^2}{c^3} $ the stimulated radiation would be the most important term. Also in these limit situations we could obtain similar results as those we will obtain in this paper.

Finally, condition $ (3) $ implies exactly that the collision terms between gas molecules are much smaller than the terms describing the interaction between gas and radiation, i.e. \\$ \left\arrowvert\mathbb{K}_i\left[\FF,\FF\right]\right\arrowvert\ll\left\arrowvert\int_{0}^\infty\;d\nu\delta(\nu-\nu_0)\int_{\mathbb{S}^2} dn\; A_0 F^2(v)+B_0 Q(n)\left(F^2(v)-F^1(v) \right)\right\arrowvert$. This is true, since with $ (2) $ in \pef{conditions} we have 
\begin{equation}
\int_{0}^\infty\;d\nu\delta(\nu-\nu_0)\int_{\mathbb{S}^2} dn\; A_0 F^2(v)+B_0 Q(n)\left(F^2(v)-F^1(v) \right)\approx B_{12}\frac{2h\nu_0^3}{c^2}F^i(v).
\end{equation}
Moreover, if $ \alpha $ is the order of magnitude of the cross section, for the Boltzmann operators we see
\begin{equation}
\mathbb{K}_i\left[\FF,\FF\right]\approx\alpha \left(\int_{\mathbb{R}^3} F^1+F^2 \;dv\right)F^i(v).
\end{equation}
This concludes the justification for condition $ (3) $ in \pef{conditions}, due to which we are assuming that the gas is very rarefied.

\begin{remark}
	We are not aware of a specific physical system, which can be described by these assumptions. Anyway, these conditions apply for example to systems of very rarefied hydrogen with an incoming radiation of frequency of visible light (more precisely $ \nu_0\approx10^{15}\;s^{-1} $ corresponding to the wavelength $ 122 \;nm $) and with temperature below $ 7000 \;K $. Already the sun photosphere, which is by far not very rarefied, with its density of order $ 10^{-4}\;kg\; m^{-3} $ yields the ratio $ \frac{\alpha \int_{\mathbb{R}^3} \left(F^1+F^2 \right)\;dv}{B_{12}\frac{2h\nu_0^3}{c^2}} $ of order $ 10^{-28} $. We remark also that the temperature in the photosphere is lower than $ 6000\;K $ and therefore the hydrogen is largely non-ionized.
\end{remark}

It is convenient to introduce a scaling which defines more mathematically the model under the assumption in \pef{conditions}. Hence, we rewrite the equation \pef{kinsystem} under the scaling $ \mathbb{K}_i\mapsto\varepsilon\mathbb{K}_i  $. The dynamic of the pair $ \left(\lambda(t),\; F^1(t)\right) $, introduced in \pef{introduction1}, will evolve with a time scale given by the relation between the collision and the radiative terms. Therefore we scale the time like $ t\mapsto \varepsilon t $ and, setting $ A_0=B_0=1 $ for simplicity, we study the equation
\begin{equation}\label{epsequation}
\begin{split}
\partial_t F^1(v)&= \mathbb{K}_1\left[\FF,\FF\right](v)+ \frac{1}{\varepsilon}\int_{\mathbb{S}^2} \left[F^2(v) +Q(n)(F^2(v)-F^1(v))\right]\;dn\\
\partial_t F^2(v)&= \mathbb{K}_2\left[\FF,\FF\right](v)- \frac{1}{\varepsilon}\int_{\mathbb{S}^2} \left[F^2(v) +Q(n)(F^2(v)-F^1(v))\right]\;dn\\
\partial_t Q(n)&= \frac{1}{\varepsilon}\int_{\mathbb{R}^3} \left[F^2(v) +Q(n)(F^2(v)-F^1(v))\right]\;dv
\end{split}
\end{equation}
Our goal is to find a new kinetic equation which defines the solution of this system as $ \eps\to0 $.\\

The mathematical properties of the homogeneous Boltzmann equation have been intensively studied (cf. \cite{cercignani}). The techniques for the homogeneous Boltzmann equation can be adapted for the case of nonelastic collision terms. The radiative transfer equation is linear in $ Q $ and has good properties. Therefore \pef{kinsystem} has all usual conservation laws of mass and momentum of the gas and of the energy of the system. We recall again that we neglect the momentum of the photons. Moreover it is possible to define an entropy related to the equation \pef{kinsystem}, which satisfies an H-theorem.

In this paper we will not prove the well-posedness theory for \pef{kinsystem}, which has been developed in \cite{tesi}, but we focus on the analysis of the solution of \pef{epsequation} when $ \eps $ tends to zero. We notice that formally if $ \eps=0 $ in equation \pef{epsequation} the possible solutions should satisfy the relation $ F^2=\lambda F^1 $ and $ Q=\frac{\lambda}{1-\lambda} $ for some $ \lambda(t)\in [0,1) $. We can call this set $ \mathcal{M} $ and it will be referred to as the manifold of steady states. We can expect that the solutions are close to $ \mathcal{M} $ for every time.

With a perturbative expansion we will be able to derive initially formally a new equation for the vector $ (\lambda, F) $, from which we can recover the interaction of the gas molecules with the photons by the relation $ \mathbb{F}=\begin{pmatrix}F\\\lambda F\\ \frac{\lambda}{1-\lambda}\end{pmatrix} $. The vector $ (\lambda, F) $ takes value in $ [0,1)\times L^1_2\left(\mathbb{R}^3\right) $. The kinetic equation we will derive is defined as follows
\begin{equation}\label{newsystem}
\begin{split}
\partial_t \lambda&= -\frac{(1-\lambda)^2(1+\lambda)}{(1+\lambda)+(1-\lambda)^2\int_{\mathbb{R}^3}F}\int_{\mathbb{R}^3} \mathbb{K}_1\left[F,\lambda\right]dv\\
\partial_t F&= \frac{1}{1+\lambda}\left(\mathbb{K}_1\left[F,\lambda\right]+\mathbb{K}_2\left[F,\lambda\right]\right)+\frac{(1-\lambda)^2F}{(1+\lambda)+(1-\lambda)^2\int_{\mathbb{R}^3}F}\int_{\mathbb{R}^3} \mathbb{K}_1\left[F,\lambda\right]dv\\
\end{split}
\end{equation}
Notice that this method can be thought as a generalized Chapman-Enskog expansion (cf. \cite{chap}). An important difference with the classical Chapman-Enskog expansion is that we do not derive an equation for the densities (mass, momentum and energy) but we construct a new equation for $ F $ and $ \lambda $. Moreover, the manifold of steady states for the homogeneous Boltzmann equation is only five dimensional, whereas here it is infinite dimensional. It is indeed characterized by the function $ F $ and $ \lambda $. Another way to understand the result of this paper is in terms of the theory of dynamical system. More precisely, we can understand the solution described in this paper by means of the dynamics of a system near a slow manifold. In the context of mathematical physics this approach wa used to described the dynamic of a small charges particle in an electromagnetic field (cf. \cite{spohn}). 

\subsection{Definitions}
Before stating the main results of this paper we shall give some important definitions and notation. First of all, we will assume throughout the paper without loss of generality that the measure of the sphere is $ 1 $, i.e. $ \mathcal{L}(\mathbb{S}^2)=1 $. As in the usual theory for the Boltzmann equation we will also have to work with some weighted spaces.
\begin{definition}
	We define the Banach Space $ L_k^1\left(\mathbb{R}^3\right) $ as the space of all integrable functions $ f$ such that $ \Arrowvert f \Arrowvert_{L_k^1}:=\int_{\mathbb{R}^3} dx \arrowvert f\arrowvert (1+|x|^2)^{k/2}<\infty $
\end{definition}
This is the subspace of integrable functions used also in the well-posedness theory for the classical Boltzmann equation. We will also say that a function $ f\in L_k^1\left(\mathbb{R}^3\right) $ has a bounded k-th moment. The next lemma gives us a useful interpolation result for these spaces.
\begin{lemma}\label{lem0}
	Let $ k\leq m $. If  $ f\in L_m^1\left(\mathbb{R}^3\right) $ then also $ f\in L_k^1\left(\mathbb{R}^3\right) $ with $ \Arrowvert f \Arrowvert_{L_k^1}\leq\Arrowvert f \Arrowvert_{L_m^1} $. Moreover the following interpolation formula holds 
	\begin{equation}
	\Arrowvert f \Arrowvert_{L_3^1}\leq\Arrowvert f \Arrowvert_{L_2^1}^{1/2}\Arrowvert f \Arrowvert_{L_4^1}^{1/2}.
	\end{equation}
	\begin{proof}
		This is a consequence of the H\"older Inequality. See for example \cite{Bressan}.
	\end{proof}
\end{lemma}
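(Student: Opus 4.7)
The plan is straightforward since both assertions reduce to elementary pointwise and Hölder-type inequalities. For the monotonicity in $k$, I first observe that $1 + |x|^2 \geq 1$ on $\mathbb{R}^3$, so whenever $k \leq m$ the pointwise bound $(1+|x|^2)^{k/2} \leq (1+|x|^2)^{m/2}$ holds everywhere. Multiplying by $|f|\geq 0$ and integrating immediately gives $\|f\|_{L_k^1} \leq \|f\|_{L_m^1}$; in particular, finiteness of the right-hand side transfers to the left-hand side, so $f \in L_m^1(\mathbb{R}^3)$ implies $f \in L_k^1(\mathbb{R}^3)$.

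For the interpolation inequality, I would split the weight $(1+|x|^2)^{3/2}$ as the geometric mean $\bigl[(1+|x|^2)^{1}\bigr]^{1/2}\bigl[(1+|x|^2)^{2}\bigr]^{1/2}$, so that the two factors match the weights appearing in $\|f\|_{L_2^1}$ and $\|f\|_{L_4^1}$ respectively. Writing
\[
\int_{\mathbb{R}^3} |f|(1+|x|^2)^{3/2}\, dx = \int_{\mathbb{R}^3} \bigl(|f|(1+|x|^2)\bigr)^{1/2} \bigl(|f|(1+|x|^2)^{2}\bigr)^{1/2}\, dx
\]
and applying the Cauchy--Schwarz inequality (equivalently, Hölder with conjugate exponents $p = q = 2$) yields
\[
\|f\|_{L_3^1} \leq \Bigl(\int_{\mathbb{R}^3} |f|(1+|x|^2)\, dx\Bigr)^{1/2} \Bigl(\int_{\mathbb{R}^3} |f|(1+|x|^2)^{2}\, dx\Bigr)^{1/2} = \|f\|_{L_2^1}^{1/2}\,\|f\|_{L_4^1}^{1/2}.
\]

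There is no genuine obstacle. The only minor point to verify is that the two factors to which Cauchy--Schwarz is applied are square-integrable, but this follows precisely from the assumed finiteness of $\|f\|_{L_2^1}$ and $\|f\|_{L_4^1}$; if either norm is infinite the inequality is trivially true. The same argument generalizes at no cost to any symmetric interpolation of the form $\|f\|_{L_k^1} \leq \|f\|_{L_{k-\theta}^1}^{1/2}\|f\|_{L_{k+\theta}^1}^{1/2}$ with $\theta \geq 0$, which is why the author simply cites \textbf{H\"older's inequality} as the essential ingredient.
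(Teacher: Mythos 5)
Your proof is correct and takes exactly the approach the paper indicates: the monotonicity follows from the pointwise bound $(1+|x|^2)^{k/2}\leq(1+|x|^2)^{m/2}$, and the interpolation is Cauchy--Schwarz (i.e.\ H\"older with $p=q=2$) applied after splitting the weight $(1+|x|^2)^{3/2}$ into the geometric mean of $(1+|x|^2)$ and $(1+|x|^2)^2$. The paper simply cites H\"older's inequality and a reference; your write-up supplies the same argument in full detail.
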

Another helpful notation that will be used in order to simplify the reading, is that given a vector $ \mathbb{F}=\left(F^1,F^2,Q\right) $ we will denote $ F $ as the vector of the first two components: $ F=\left(F^1,F^2\right) $. So that we can also write $ \mathbb{F}=(F,Q) $. Moreover we will also write for simplicity $ \Arrowvert F\Arrowvert_{L_2^1}= \Arrowvert F^1\Arrowvert_{L_2^1}+\Arrowvert F^2\Arrowvert_{L_2^1} $. With this notation we can define the space from where we consider the initial data.
\begin{definition}\label{X}
	We define the Banach space $ \mathcal{X}=\left\{\FF= \begin{pmatrix}F^1\\F^2\\Q\end{pmatrix}\in L^1_2\left(\mathbb{R}^3\right)\times L^1_2\left(\mathbb{R}^3\right)\times L^1\left(\mathbb{S}^2\right)\right\}  $ and we call $ \mathcal{X}_+=\left\{ \FF\in\mathcal{X}: \FF\geq0\right\}\subset\mathcal{X} $. 
\end{definition}We briefly give the definition of solutions for the kinetic equation \pef{kinsystem}.
\begin{definition}
	We call a non-negative vector $ \mathbb{F}=\left(F^1,F^2,Q\right)\in C^1\left([0,\infty), \mathcal{X}\right) $ a strong solution if $ \FF(t)\in\mathcal{X}_+ $ for all $ t $ and it solves \pef{kinsystem} almost everywhere.
\end{definition}
Weak solutions are defined testing \pef{kinsystem} against some proper test-functions.
\begin{definition}
	We say that a vector of non negative functions $ \mathbb{F}\in C\left([0,\infty), \mathcal{X}\right) $ is a weak solution of the kinetic system \pef{kinsystem}, if for all $ \varphi^i(v)\in C_c^{\infty}(\mathbb{R}^3)$ ,  $i=1,2 $ and $ \varphi^3(n)\in C_c^{\infty}(\mathbb{S}^2) $ the following holds
	\begin{equation}\label{weak}
	\begin{split}
	&\partial_t\left[\int_{\mathbb{R}^3} dv\; \left(F^1\varphi^1(v)+F^2\varphi^2(v)\right)+\int_{\mathbb{S}^2}dn \;Q\varphi^3(n)\right]\\
	&= \frac{1}{2}\sum_{i=1}^2\sum_{j=1}^2\int_{\mathbb{R}^3}dv_1\int_{\mathbb{R}^3}dv_2\int_{\mathbb{S}^2}d\omega\; \arrowvert\omega\cdot(v_1-v_2)\arrowvert F^i(v_1)F^j(v_2)\left(\varphi^i(v_3)+\varphi^i(v_4)-\varphi^i(v_1)-\varphi^i(v_2)\right)\\
	&+\int_{\mathbb{R}^{12}} d\ov_1 d\ov_2 d\ov_3 d\ov_4 \delta(\ov_1+\ov_2-\ov_3-\ov_4)\delta(\arrowvert\ov_1\arrowvert^2+\arrowvert\ov_2\arrowvert^2-\arrowvert\ov_3\arrowvert^2-\arrowvert\ov_4\arrowvert^2-2\varepsilon_0)W_{ne}(\ov_1,\ov_2;\ov_3,\ov_4)\\
	&\left[F^1(\ov_1)F^1(\ov_2)\left(\varphi^2(\ov_3)+\varphi^1(\ov_4)-\varphi^1(\ov_1)-\varphi^1(\ov_2)\right)+F^2(\ov_3)F^1(\ov_4)\left(\varphi^1(\ov_1)+\varphi^1(\ov_2)-\varphi^2(\ov_3)-\varphi^1(\ov_4)\right)	\right]\\
	&+\int_{\mathbb{R}^3}dv\int_{\mathbb{S}^2}dn\;h_{rad}\left[\mathbb{F}\right]\left[\varphi^1(v)-\varphi^2(v)+\varphi^3(n)\right], 
	\end{split}
	\end{equation}
	where $ W_{ne}(\ov_1,\ov_2;\ov_3,\ov_4)= C_0 \frac{\arrowvert\omega \cdot (\ov_1-\ov_2)\arrowvert}{\arrowvert \ov_1-\ov_2\arrowvert}  $.
\end{definition}
We obtained this definition testing each side of the system \pef{kinsystem} with the proper choice of $ \varphi^i $ and then integrating both sides with respect to $ v $ and $ n $. As it can be read in the Appendix B of \cite{paper} evaluating the delta functions with respect to $ \ov_1,\ov_2 $ correspondingly $ \ov_3,\ov_4 $ we obtain the desired nonelastic kernels \pef{kernels}. We will refer to \pef{weak} often as to the weak formulation of \pef{kinsystem}.\\

\begin{remark}\label{rem1}
	Using the inequalities \pef{usual estimate 3}, \pef{usual estimate} and \pef{usual estimate 2} we see that for \linebreak$ \mathbb{F}\in L^{\infty}\left( [0,\infty), L_4^1\left(\mathbb{R}^3\right)\times L_4^1\left(\mathbb{R}^3\right)\times L^1\left(\mathbb{S}^2\right)\right) $ the right hand side of the equation \pef{weak} is well defined also for $ \varphi^i(v)=C (1+\arrowvert v\arrowvert^2) $ if $ i=1,2 $ and $ \varphi^3(n)=C $ for all $ C>0 $.
\end{remark}

Moreover solutions with the control of at least the fourth moment satisfy also the conservation of mass, momentum and energy, which can be deduced by an approximating argument
\begin{equation}
\begin{split}
\partial_t\left[\int_{\mathbb{R}^3}dv \left(F^1+F^2\right)\right]=0 & \;\;\;\;\;\;\;\; \;\;\;\;\;\;\;\;\text{ (mass),}\\
\text{for }i=1,2,3\;\;\;\;\;\;\partial_t\left[\int_{\mathbb{R}^3}dv \left(F^1+F^2\right)v_i\right]=0 &\; \;\;\;\;\;\;\;\;\;\;\;\;\;\;\;\text{ (momentum),}\\
\partial_t\left[\int_{\mathbb{R}^3} \left(F^1\arrowvert v\arrowvert^2+F^2\left(\arrowvert v\arrowvert^2+2\varepsilon_0\right)\right)dv\;+2\varepsilon_0\int_{\mathbb{S}^2}Q\;dn\right]=0&\; \;\;\;\;\;\;\;\;\;\;\;\;\;\;\;\text{ (energy).}\\
\end{split}
\end{equation}
Finally, we can also define an entropy.
\begin{definition}\label{entropydef}
	We define for $ \mathbb{F} $ a solution to the kinetic equation \pef{kinsystem} the entropy at time $ t<\infty $ by
	\begin{equation}\label{entropy}
	\mathcal{H}(t):=\int_{\mathbb{R}^3} dv \; \left(F^1\log(F^1)+F^2\log(F^2)\right)+\int_{\mathbb{S}^2}dn \;\left(Q\log(Q)-\left(\frac{A_0}{B_0}+Q\right)\log\left(\frac{A_0}{B_0}+Q\right)\right).
	\end{equation}
\end{definition}
The entropy and its dissipation are important tools for the development of the well-posedness theory. We will be able to define them also for the new equation we will derive later.	
\subsection{Main results of the paper}
We conclude the introduction with the following two theorems, which are the main results of this paper. The first one is about the well-posedness theory for the kinetic system \pef{newsystem}.
\begin{theorem}\label{infinityexistence}
	Let $ (\lambda_0,F_0)\in [0,1)\times L^1_2\left(\mathbb{R}^3\right) $ with $ F_0\log(F_0)\in L^1\left(\mathbb{R}^3\right) $ and $ F_0\in L^1_4\left(\mathbb{R}^3\right) $. Let the initial energy $ E_0 $ be finite. Assume $ (\lambda_0,F_0) $ satisfies $ F_0(v)\geq Ce^{-\beta\arrowvert v\arrowvert^2} $. Let $ T>0 $ then there exists a weak solution $ (\lambda, F)\in C\left([0,T], \mathbb{R}\times L^1_2\left(\mathbb{R}^3\right)\right)$ with $ \lambda(t)\in[0,1) $ of the kinetic system \pef{newsystem} for the initial value $ (\lambda_0, F_0) $. This is the unique solution with $ F\in L^{\infty}\left([0,T], L^1_4\left(\mathbb{R}^3\right)\right) $. 
\end{theorem}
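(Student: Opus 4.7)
The plan is to adapt the classical Boltzmann well-posedness scheme (truncation, fixed point, uniform a priori estimates, weak compactness, Gronwall uniqueness) to the reduced system. The structural input to exploit is that \pef{newsystem} is the projection of \pef{kinsystem} onto the slow manifold $\mathcal{M}=\{\FF=(F,\lambda F,\lambda/(1-\lambda))\}$: this lets us pull back the mass and energy conservation laws of \pef{weak} (choosing $\varphi^1=\varphi^2=1$ and $\varphi^i=|v|^2$) and the entropy \pef{entropy} to obtain corresponding identities directly for $(\lambda,F)$.

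First I would truncate $\KK_i\mapsto\KK_i^R$ by cutting the kernels at $|v|\leq R$. Since $(1+\lambda)+(1-\lambda)^2\int F\geq 1$ whenever $\lambda\in[0,1]$ and $F\geq 0$, the truncated right-hand side of \pef{newsystem} is Lipschitz on bounded sets of $\mathbb{R}\times L^1_2$, and a Banach fixed point in $C([0,\tau];\mathbb{R}\times L^1_2(\mathbb{R}^3))$ gives a unique local truncated solution $(\lambda_R,F_R)$. The pivotal estimate comes directly from the first line of \pef{newsystem}: because $\frac{(1-\lambda)^2(1+\lambda)}{(1+\lambda)+(1-\lambda)^2\int F}\leq (1-\lambda)^2$ and $|\int\KK_1\,dv|\leq C\|F\|_{L^1_2}^2$ by \pef{usual estimate}--\pef{usual estimate 2}, one has $|\partial_t\lambda|\leq C(T)(1-\lambda)^2$, and ODE comparison yields
\begin{equation*}
1-\lambda(t)\geq\frac{1-\lambda_0}{1+C(T)T(1-\lambda_0)}>0,\qquad t\in[0,T].
\end{equation*}
I would then propagate, uniformly in $R$: (i) the mass $(1+\lambda)\int F\,dv$ and energy integrals, which together with the $\lambda$-bound give a uniform $L^\infty_tL^1_2$ bound for $F$; (ii) the $L^1_4$ moment via Povzner-type inequalities for the elastic part of $\KK_i$, the nonelastic contributions being absorbed thanks to \pef{usual estimate}--\pef{usual estimate 2}; (iii) $L\log L$ control through the reduced entropy; (iv) a Mouhot-type Maxwellian lower bound $F_R(t,v)\geq C'e^{-\beta'|v|^2}$, propagated from the hypothesis $F_0\geq Ce^{-\beta|v|^2}$, which is what guarantees that the slow-manifold parametrization $F^2=\lambda F^1$ makes pointwise sense. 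Weak $L^1$ compactness (Dunford--Pettis) combined with $t$-equicontinuity coming from (i)--(ii) yields a subsequence converging to a weak solution $(\lambda,F)$ on $[0,T]$.

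For uniqueness in the class $F\in L^\infty([0,T],L^1_4)$, take two such solutions $(\lambda,F)$ and $(\tl,\tF)$ and apply Gronwall to $\delta(t)=|\lambda(t)-\tl(t)|+\|F(t)-\tF(t)\|_{L^1_2}$. The quantitative barrier $\lambda\leq 1-c(T)$ from the ODE comparison keeps the prefactors of \pef{newsystem} smooth in $\lambda$ on a compact interval, so they are uniformly Lipschitz; the bilinear operators $\KK_i$ are Lipschitz from $L^1_4$ to $L^1_2$ with constants controlled by $\|F\|_{L^1_4}+\|\tF\|_{L^1_4}$; the resulting inequality $\delta(t)\leq C\int_0^t\delta(s)\,ds$ closes by Gronwall. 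The main obstacle throughout is the degenerate factor $(1-\lambda)^2$ in the $\partial_t\lambda$ equation: without the a priori ODE comparison keeping $\lambda$ uniformly bounded away from $1$, the slow-manifold representation becomes singular and the right-hand side of \pef{newsystem} loses Lipschitz regularity; once that barrier is in place, the remainder reduces to standard Povzner/entropy machinery for Boltzmann-type equations.
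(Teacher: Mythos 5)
Your overall scheme---truncate, Lipschitz local theory, a priori bounds, Dunford--Pettis compactness, Gronwall uniqueness---is the same as the paper's (Section 3). However, two of your steps contain genuine gaps and one detail is missing.

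The truncation you propose, cutting the kernels ``at $|v|\leq R$,'' is not compatible with the pre/post-collisional change of variables, and this matters more than it might appear. The paper truncates in \emph{relative} velocity: $B_{el,n}=\min(|v_1-v_2|,n)$, $B^{12}_{ne,n}=B^{12}_{ne}\,\rchi_{|\ov_1-\ov_2|\leq n}$ and $B^{34}_{ne,n}=B^{34}_{ne}\,\rchi_{|\ov_3-\ov_4|\leq\sqrt{n^2-4\varepsilon_0}}$, with the two nonelastic cutoff levels chosen precisely so that Proposition \ref{propcutoff} holds and the truncated kernel is invariant under the pre/post change of variables. That invariance is what makes the truncated weak formulation \pef{nonelasticcutoff4} and hence Lemma \ref{lemcutoff1} (conservation of mass, momentum, energy for the cut-off system) go through, and those conservation laws are exactly what feed your uniform $L^1_2$ bound, which in turn controls $\left|\int\KK_1\right|$ in your $\lambda$-estimate. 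A truncation localized in $|v|$ does not satisfy $\widetilde B(v_1,v_2)=\widetilde B(v_3,v_4)$ (the individual $|v_i|$ are not conserved under collision, only $|v_1|^2+|v_2|^2$), so conservation fails at the truncated level and the chain of a priori estimates collapses. Your ODE-comparison barrier $1-\lambda(t)\geq (1-\lambda_0)/(1+C(T)T(1-\lambda_0))$ is correct and is a valid, more elementary alternative to the paper's energy-conservation bound $\lambda\leq E_0/(2\varepsilon_0+E_0)$ in Proposition \ref{prop1}, but it degrades with $T$ whereas the paper's is $T$-uniform and enters the explicit constants of Lemma \ref{lemcutoff3}. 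You also never verify $\lambda(t)\geq 0$, which is needed ($\log\lambda$ appears in the entropy); the paper obtains it from a sign check at $\lambda=0$ in Proposition \ref{prop1} and, at the cut-off level, from the structure of the fixed-point map \pef{Positivity4}.

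The uniqueness claim is the more serious gap. You assert that $\KK_i$ is ``Lipschitz from $L^1_4$ to $L^1_2$ with constants controlled by $\|F\|_{L^1_4}+\|\tF\|_{L^1_4}$'' and that this closes $\delta(t)\leq C\int_0^t\delta$. As stated this does not close: a direct estimate of $\|\KK_i[F,\lambda]-\KK_i[\tF,\lambda]\|_{L^1_2}$ forces, in some of the gain terms, both the kernel weight $(1+|v|^2)^{1/2}$ and the moment weight $(1+|v|^2)$ onto the \emph{differenced} factor, producing $\|F-\tF\|_{L^1_3}$ (or worse) rather than $\|F-\tF\|_{L^1_2}$; by the interpolation of Lemma \ref{lem0} this yields at best $\delta'\lesssim\delta^{1/2}$, which does not give uniqueness. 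The paper avoids this by testing the weak formulation with $\phi(v)=\text{sign}(F-G)(1+|v|^2)$ (Theorem \ref{infinityuniqueness}, estimates \pef{uniq1}--\pef{uniq2}); after symmetrizing over pre/post velocities, the extra weight always lands on an undifferenced factor (controlled by $\|F\|_{L^1_4},\|G\|_{L^1_4}$) while $|F-G|$ carries only the $(1+|v|^2)$-weight, giving $\partial_t\|F-G\|_{L^1_2}\lesssim \sup_t\max(\|F\|_{L^1_4},\|G\|_{L^1_4})\,\|F-G\|_{L^1_2}$, which closes Gronwall. You need this sign-function argument, or an equivalent symmetrization, to finish the uniqueness step.
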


We will prove also rigorously that the kinetic system \pef{newsystem} really describes the behavior of the solutions to \pef{epsequation} as $ \eps\to0 $.

\begin{theorem}\label{thmrigorous}
	Let $ \eps>0 $ and $ T>0 $. Let $ \FF_0 $ defined by $ (\lambda_0,F_0) $ and with $ \FF_0>0 $ and $ F_0(v)\geq Ce^{-\beta\arrowvert v\arrowvert^2} $. Assume it has mass $ \kappa_0>0 $, energy $ E_0:=E\left(\FF_0\right)\leq \frac{1}{2} $ and $ F_0\in L^1_4 $.
	
	Let $ \delta<\delta_0 $ small enough for some $ \delta_0 $ depending on the initial value $ \FF_0 $. We define $ \tFF_0=\FF_0+W_0$ such that $ \tFF_0>0 $ and $ \Arrowvert W_0\Arrowvert<\delta $. 
	
	Let $ \FF^\eps\in C^1\left([0,\infty),L^1_2\left(\mathbb{R}^3\right)\times L^1_2\left(\mathbb{R}^3\right)\times L^1\left(\mathbb{S}^2\right)\right) $ be the solution to the equation \pef{epsequation} for the initial value $ \tFF_0 $. 
	
	Let $ \oFF $ be defined by $ (\ol,\oF)\in C\left([0,\infty), \mathbb{R}\times L^1_2\left(\mathbb{R}^3\right)\right) $, which are the solution of the derived equation \pef{newsystem} for the initial value $ \FF_0 $.
	
	Then for all $ s>0 $ we have
	\begin{equation}
	\lim\limits_{\eps\to 0}\sup_{s\leq t\leq T}\Arrowvert \FF^\eps-\oFF\Arrowvert_\mathcal{X}= 0.
	\end{equation}
\end{theorem}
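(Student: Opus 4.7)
The plan is a two-time-scale analysis: the fast part $\tfrac{1}{\eps}\mathcal{R}[\FF]$ of \pef{epsequation} has $\mathcal{M}$ as its set of equilibria, so the solution $\FF^\eps$ should relax onto $\mathcal{M}$ on the time scale $\eps$, after which the slow motion along $\mathcal{M}$ is governed by \pef{newsystem}. Concretely, I would decompose
\begin{equation*}
\FF^\eps(t)\;=\;\Psi(\lambda^\eps(t),F^\eps(t))+W^\eps(t),\qquad\Psi(\lambda,F):=\bigl(F,\,\lambda F,\,\tfrac{\lambda}{1-\lambda}\bigr),
\end{equation*}
choosing $(\lambda^\eps,F^\eps)$ so that $W^\eps$ is transverse to $\mathcal{M}$, for instance by requiring $W^\eps$ to carry zero mass, momentum and energy, the invariants of the fast flow. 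The target is to prove $\Arrowvert W^\eps(t)\Arrowvert_\mathcal{X}\to 0$ and $(\lambda^\eps,F^\eps)\to(\ol,\oF)$ uniformly on $[s,T]$, from which the theorem follows by continuity of $\Psi$.

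First I would derive uniform-in-$\eps$ a priori estimates. Since $\mathcal{R}$ preserves mass, momentum and energy, these conservation laws hold exactly for each $\eps>0$; combined with $E_0\leq\tfrac12$ and the smallness of $W_0$, they keep $\lambda^\eps$ inside a fixed compact subinterval of $[0,1)$. Propagation of the fourth moment follows from the standard Boltzmann estimates applied to $\mathbb{K}$ together with the bounds \pef{usual estimate}, \pef{usual estimate 2} on the nonelastic kernels, while the Gaussian pointwise lower bound $F^\eps\geq Ce^{-\beta\arrowvert v\arrowvert^2}$ is propagated by the argument used to prove Theorem \ref{infinityexistence}. Second, I would prove fast relaxation via the radiative part of the entropy dissipation,
\begin{equation*}
D_{rad}[\FF^\eps]\;=\;\frac{1}{\eps}\int_{\mathbb{R}^3}\!dv\int_{\mathbb{S}^2}\!dn\,\bigl(F^{2,\eps}+Q^\eps(F^{2,\eps}-F^{1,\eps})\bigr)\,\log\frac{F^{2,\eps}(1+Q^\eps)}{F^{1,\eps}Q^\eps}\;\geq\;0,
\end{equation*}
which vanishes precisely on $\mathcal{M}$. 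Using the moment and positivity bounds from the first step, I would establish a coercive lower bound $D_{rad}[\FF^\eps]\geq(c/\eps)\Phi[\FF^\eps]$, where $\Phi$ is equivalent to a squared $\mathcal{X}$-norm of $W^\eps$ and $c>0$ is $\eps$-independent. Since the contribution of $\mathbb{K}$ to $\partial_t\Phi$ is $O(1)$, Gronwall gives $\Phi[\FF^\eps(t)]\leq e^{-ct/\eps}\Phi[\tFF_0]+O(\eps)$, hence $\Arrowvert W^\eps(t)\Arrowvert_\mathcal{X}\to 0$ uniformly on $[s,T]$ for every $s>0$.

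Finally I would identify the slow equation. Projecting \pef{epsequation} onto the tangent space of $\mathcal{M}$ — writing the equations for the scalars $\int F^{2,\eps}dv$ and $\int Q^\eps dn$ (whose stiff radiative parts cancel after using the manifold relations), solving algebraically for $\partial_t\lambda^\eps$, and substituting into the equation for $F^{1,\eps}$ — produces a system for $(\lambda^\eps,F^\eps)$ that agrees with \pef{newsystem} up to an error bounded by $\Arrowvert W^\eps\Arrowvert_\mathcal{X}$ times a polynomial of the moments. By the preceding step this error is $o(1)$ on $[s,T]$, and the uniform moment bounds together with time-equicontinuity yield relative compactness in $C([s,T],\mathbb{R}\times L^1_2)$. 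Every subsequential limit solves \pef{newsystem} with initial datum $(\lambda_0,F_0)$, since the fast relaxation erases $W_0$ before time $s$, so by the uniqueness in Theorem \ref{infinityexistence} the limit is $(\ol,\oF)$ and the whole family converges. The main obstacle is the coercive estimate $D_{rad}\geq(c/\eps)\Phi$ with $c$ independent of $\eps$: the bilinear stimulated-emission term $Q(F^2-F^1)$ forbids a naive quadratic Taylor expansion, so one must expand the integrand along the segment from $\FF^\eps$ to $\Psi(\lambda^\eps,F^\eps)$ and exploit the uniform positive lower bounds to control the resulting integrals.
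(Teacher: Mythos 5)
Your overall two--time--scale strategy---decompose near $\mathcal{M}$, show the transverse component decays like $e^{-ct/\eps}+O(\eps)$, then identify the slow equation---is the same skeleton the paper uses, but two of your central steps do not go through as stated.

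First, the decomposition is underdetermined. Requiring $W^\eps$ to carry zero mass, momentum and energy gives only five scalar conditions. But for each fixed $v$ the fast (radiative) flow conserves $F^1(v)+F^2(v)$ pointwise, and in addition $\int F^2\,dv+\int Q\,dn$, so the genuine invariants of the fast flow are an infinite--dimensional family, not five scalars. Correspondingly, a unique projection onto $\mathcal{M}$ requires the constraints $W^{2,\eps}=-W^{1,\eps}$ (pointwise in $v$) and $\int_{\mathbb{R}^3}W^{1,\eps}\,dv=\int_{\mathbb{S}^2}W^{3,\eps}\,dn$, which is exactly the orthogonal space \pef{orthogonalspace}. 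These conditions do imply vanishing of the five scalar invariants, but the converse fails: your five conditions leave the projection wildly non-unique, so $(\lambda^\eps,F^\eps)$ is not defined. The paper constructs the correct decomposition via the Implicit Function Theorem (Lemma \ref{parametrization} and Corollary \ref{parametrization2}).

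Second, the coercive dissipation bound $D_{rad}[\FF^\eps]\geq(c/\eps)\,\Phi[\FF^\eps]$ with $\Phi$ comparable to $\Arrowvert W^\eps\Arrowvert_\mathcal{X}^2$ is the linchpin of your argument, and you leave it open. It is not a small technicality: to close the Gr\"onwall loop you would need (i) a log--Sobolev--type inequality $D_{rad}\geq(c/\eps)\,\mathcal{H}_{rel}$ for the linearised radiative operator at $\oFF^\eps$, uniform over the $\eps$-- and time--dependent reference state, and (ii) a Csisz\'ar--Kullback--Pinsker step to pass from $\mathcal{H}_{rel}$ back to $\Arrowvert W^\eps\Arrowvert_{L^1}^2$; moreover the bilinear $Q(F^2-F^1)$ term makes the relative entropy expansion genuinely awkward, as you yourself flag. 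The paper sidesteps all of this. It writes the exact equation \pef{step212} for $W_\eps$ and tests it against the vector $(\mathrm{sign}(\alpha_\eps)(1+\arrowvert v\arrowvert^2),\,-\mathrm{sign}(\alpha_\eps)(1+\arrowvert v\arrowvert^2),\,\mathrm{sign}(\theta_\eps))$. Using only the orthogonality structure of $W_\eps$ and the explicit form of the linear operator $L_{\oFF^\eps}$ in \pef{LF}, one computes directly (see \pef{step34}--\pef{step38}) the linear bound $\langle\mathrm{sign},L_{\oFF^\eps}W_\eps\rangle\leq-c\Arrowvert W_\eps\Arrowvert_\mathcal{X}$ with $c>0$ independent of $\eps$. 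This gives $\partial_t\Arrowvert W_\eps\Arrowvert_\mathcal{X}\leq-(c/\eps)\Arrowvert W_\eps\Arrowvert_\mathcal{X}+O(1)$ directly, with no entropy inequality and no CKP step required. Your final compactness-plus-uniqueness identification of the slow limit is a plausible alternative to the paper's direct Gr\"onwall estimate on $\Arrowvert\oFF^\eps-\oFF\Arrowvert_\mathcal{X}$ (equations \pef{step412}--\pef{step415}), but only once the first two steps are repaired.
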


\subsection{A summary of mathematical models for photon-gas systems}
The behavior of a gas interacting with photons is besides its mathematical interest it is also relevant in astrophysical applications. The radiative transfer equation is the main tool for the study of this problem. A detailed explanation of its derivation and its properties can be found in \cite{Chandrasekhar, mihalas,oxenius,Rutten}.

The first studies of the interaction between gas and radiation can be found in \cite{compton, Milne}. These models describe the diffusion of radiation through a gas which is trapping it. However, they do not consider collisions between gas molecules. The only interactions taken into account are the absorption and the successive emission of photons by the gas molecules.

Some papers which have studied the radiative transfer equation interacting with matter in a rigorous mathematical way are the following. In \cite{Bardos, rossanipolew} the interaction between a fluid system and radiation has been considered.
Models close to the spirit of this paper, where the interaction is due to collisions and radiative processes, are developed and discussed in \cite{Burgers,Nouri2,Nouri1,rossanispiga}. In particular in \cite{Nouri2,Nouri1} the author studied also the well-posedness theory for a kinetic equation describing a system where the collisions between the gas molecules are only elastic and where in addition to the radiative process \pef{rad} the photon-gas interaction can be also of the form $ \overline{A} + \gamma \rightarrow A + 2\gamma $. The author of \cite{Burgers} develops a model which considers only two-levels gas molecules and monochromatic radiation, which are assumptions we also make in this paper. In the model in \cite{rossanispiga} all three interactions \pef{elcoll}, \pef{nonelcoll} and \pef{rad} are included. 

In \cite{paper} the authors studied the same model we will work with. They studied different scaling limits yielding either LTE (local thermal equilibrium) or Non-LTE situations and they developed via a hydrodynamic limit Euler equations coupled with the radiative transfer equation.

\subsection{Outline of the paper}   
In Section 2 we formally derive equation \pef{newsystem}. Section 3 deals with the well-posedness theory for this equation, while in Section 4 the rigorous proof for the derivation of equation \pef{newsystem} will be shown. We postpone until Section 4 the proof of Theorem \ref{thmrigorous} since it requires the well-posedness theory developed in Section 3.
\section{The fast radiation limit}

It is possible to develop a good well-posedness theory for the kinetic equation \pef{kinsystem} when the collisions between gas molecules and the radiation processes are of the same order. In this paper we aim to study the case when the collision terms are negligible compared to the terms describing the interaction between the gas and the radiation. In this Section we will formally derive the new kinetic equation as it is stated in \pef{newsystem}.

\subsection{The behavior of the solutions}
We recall the equation we stated in \pef{epsequation}, which describes the behavior of the gas and the photons shortly after the initial time. After times of order $ \eps $, we can expect the distribution of gas molecules and radiation to solve  
\begin{equation}
\begin{split}
0&=\int_{\mathbb{S}^2} \left[F^2(v) +Q(n)(F^2(v)-F^1(v))\right]\;dn\\
0&=\int_{\mathbb{R}^3}\left[F^2(v) +Q(n)(F^2(v)-F^1(v))\right]\;dv
\end{split}
\end{equation}
Hence, $ Q $ is independent of $ n $ with $ Q(n)=\frac{\int_{\mathbb{R}^3} F^2(v)\;dv}{\int_{\mathbb{R}^3}\left[ F^1(v)-F^2(v)\;dv\right]} $. This is possible only assuming \linebreak$ \int_{\mathbb{R}^3} F^2(v)\;dv<\int_{\mathbb{R}^3} F^1(v)\;dv $, since $ Q $ is non-negative. Moreover, substituting this photon number density into the first equation we see
\begin{equation*}
F^2(v)=\frac{\int_{\mathbb{S}^2} Q(n)\;dn}{\int_{\mathbb{S}^2} \left[1+ Q(n)\;dn\right]}F^1(v)=\frac{\int_{\mathbb{R}^3} F^2(v)\;dv}{\int_{\mathbb{R}^3} F^1(v)\;dv}F^1(v).
\end{equation*}
Hence we have to look for solutions living in the manifold given by
\begin{equation}\label{manifold}
\begin{cases}
\overline{F}^2=\lambda\overline{F}^1\\
\overline{Q}=\frac{\lambda}{1-\lambda}
\end{cases},
\end{equation}
where $\overline{F}^1\geq 0 $, $ \overline{F}^1\in L_2^1\left(\mathbb{R}^3\right) $ and $ \lambda\in[0,1) $ for all $ t\geq 0 $.
In order to simplify the notation we define the manifold of steady states $ \mathcal{M} $ as follows.
\begin{definition}\label{manifold1}
	We call $ \mathcal{M}=\left\{\FF\in\mathcal{X}: \FF= \begin{pmatrix}F\\\lambda F\\\frac{\lambda}{1-\lambda}\end{pmatrix}, \lambda<1\right\}  $ and for the non-negative $ F $ we denote \linebreak$ \mathcal{M}_+=\left\{ \FF\in\mathcal{M}: F\geq 0, \lambda\in[0,1)\right\} $.
\end{definition}
$ \mathcal{M}_+ $ plays the role of the so called critical manifold for the perturbative problem considered in \cite{spohn}, but we will call it the manifold of steady states.

We look for solutions of \pef{epsequation} that remain close to the manifold $ \mathcal{M}_+ $ for arbitrary times in powers of $ \eps $. Imposing suitable compatibility conditions we will obtain a kinetic equation which describes the evolution of functions in $ \mathcal{M}_+ $. The procedure is reminiscent to the classical Chapman-Enskog expansion which yields to the hydrodynamic limits of the Boltzmann equation. We call the entire approach a generalized Chapman-Enskog expansion. We will hence study the new kinetic system, its entropy and we will develop a weak well-posedness theory for it. We will then prove Theorem \ref{thmrigorous}.

\subsection{The generalized Chapman-Enskog expansion}
Setting $ \overline{F}^i $, $ \lambda $ and $ \overline{Q} $ as in \pef{manifold}, we look for a perturbative expansion for small variation of these functions. Therefore taking $ H(t,n) $, $ G^i(t,v) $ we define
\begin{equation}\label{expansion1}
\begin{split}
F^1&=\overline{F}^1\left(1+\varepsilon G^1\right),\\
F^2&=\overline{F}^2\left(1+\varepsilon G^2\right),\\
Q&=\frac{\lambda}{1-\lambda}\left(1+\varepsilon H\right).
\end{split}
\end{equation}
Putting \pef{expansion1} into the kinetic system \pef{kinsystem} and neglecting all terms of order $ \varepsilon $ or small we obtain
\begin{equation}\label{expansion2}
\begin{split}
\partial_t \overline{F}^1&=\mathbb{K}_1\left[\oFF,\oFF\right]+\int_{\mathbb{S}^2}\left[\frac{\lambda}{1-\lambda} \overline{F}^1 \left(G^2-G^1\right)-\lambda H \overline{F}^1\right]\;dn\\
\partial_t \overline{F}^2&=\mathbb{K}_2\left[\oFF,\oFF\right]-\int_{\mathbb{S}^2}\left[\frac{\lambda}{1-\lambda} \overline{F}^1 \left(G^2-G^1\right)-\lambda H \overline{F}^1\right]\;dn\\
\partial_t \frac{\lambda}{1-\lambda}&=\int_{\mathbb{R}^3}\left[\frac{\lambda}{1-\lambda} \overline{F}^1 \left(G^2-G^1\right)-\lambda H \overline{F}^1\right]\;dv\\
\end{split}
\end{equation}
We can neglect such terms because the collision terms and all the other functions are of order 1. The collision terms $ \mathbb{K}_i\left[\oFF,\oFF\right] $ are as the one defined in \pef{coll1} and \pef{coll2} for the function $ \oFF=\left(\overline{F}^1,\overline{F}^2,\overline{Q}\right) $. When the vector $ \mathbb{F} $ is an element of the steady manifold $ \mathcal{M}_+ $ we write $ \mathbb{K}_i\left[F,\lambda\right] $ in order to stress out the fact that $ F^1=F $ and $ F^2= \lambda F $.

Let us fix $ t\in [0,\infty) $. Let us consider the space $ \textbf{H}:= L^2\left(\mathbb{R}^3\right)\times L^2\left(\mathbb{R}^3\right)\times L^2\left(\mathbb{S}^2\right) $. Let us fix $ \lambda\in[0,1) $ and $ \overline{F}^1 $. For each $ \lambda$, $ \oF^1 $ we define an operator $ L_{\lambda, \oF^1} $ by means of
\begin{equation}\label{expansion3}
\begin{split}
L_{\lambda,\overline{F}^1}\;\;\; :\;\;\;&\textbf{H}\;\;\;\;\;\;\longrightarrow\;\;\;\;\;\;\;\;\;\;\;\;\;\;\;\;\;\;\;\;\; \textbf{H}\\
&\begin{pmatrix} G^1\\ G^2\\ H\end{pmatrix}\mapsto L_{\lambda,\oF^1}\begin{pmatrix} G^1\\ G^2\\ H\end{pmatrix}=\begin{pmatrix} L_1\left(G^1, G^2, H\right)\\ L_2\left(G^1, G^2, H\right)\\ L_3\left(G^1, G^2, H\right)\end{pmatrix}=\begin{pmatrix}\int_{\mathbb{S}^2}\left[\frac{\lambda}{1-\lambda} \overline{F}^1 \left(G^2-G^1\right)-\lambda H \overline{F}^1\right]\;dn \\-\int_{\mathbb{S}^2}\left[\frac{\lambda}{1-\lambda} \overline{F}^1 \left(G^2-G^1\right)-\lambda H \overline{F}^1\right]\;dn \\ \int_{\mathbb{R}^3}\left[\frac{\lambda}{1-\lambda} \overline{F}^1 \left(G^2-G^1\right)-\lambda H \overline{F}^1\right]\;dv\end{pmatrix}
\end{split}
\end{equation}
On the Hilbert space with inner product $ \left(\textbf{H}, \langle , \rangle_{\mathbb{R}^3}+\langle , \rangle_{\mathbb{R}^3}+ (1-\lambda)\langle , \rangle_{\mathbb{S}^2})\right) $ the operator $ L_{\lambda,\overline{F}^1} $ is self-adjoint, indeed for $\left( \tilde{G}^1,\tilde{G}^2,\tilde{H}\right)\in \textbf{H} $ an easy computation shows
\begin{equation}\label{expansion4}
\begin{split}
&\langle \tilde{G}^1, L_1(G^1,G^2,H)\rangle_{\mathbb{R}^3}+\langle \tilde{G}^2, L_2(G^1,G^2,H)\rangle_{\mathbb{R}^3}+(1-\lambda)\langle \tilde{H}, L_3(G^1,G^2,H)\rangle_{\mathbb{S}^2}\\
=&\langle L_1(\tilde{G}^1,\tilde{G}^2,\tilde{H}), G^1\rangle_{\mathbb{R}^3}+\langle L_2(\tilde{G}^1,\tilde{G}^2,\tilde{H}), G^2\rangle_{\mathbb{R}^3}+(1-\lambda)\langle L_3(\tilde{G}^1,\tilde{G}^2,\tilde{H}), H\rangle_{\mathbb{S}^2}.
\end{split}
\end{equation}
In order to find a new kinetic equation, we need to eliminate the operator $ L_{\lambda, \oF^1} $ in equation \pef{expansion2}. Hence, we start looking at the tangent space of $ \oFF:=\left(\oF^1,\;\oF^2,\; \overline{Q}\right) $ in the Hilbert space $ \textbf{H} $. To do that we consider $ F^i $ and $ \tilde{\lambda} $ given by small perturbation of these $ \overline{F}^1 $ and $ \lambda $ in the manifold.
\begin{equation}\label{expansion5}
\begin{cases}
F^1= \overline{F}^1(1+\xi_1)\\
F^2= \overline{F}^2(1+\xi_2)\\
Q= \frac{\lambda}{1-\lambda}(1+\eta) 
\end{cases}
\end{equation}
Defining $ \tilde{\lambda}=\frac{Q}{1+Q} $ and expanding with Taylor we see therefore that $ \tilde{\lambda}=\lambda(1+(1-\lambda)\eta) $. Hence imposing $ F^2=\tilde{\lambda}F^1 $ we obtain the condition for the tangent space: $ \xi_2=\xi_1+(1-\lambda)\eta $. Therefore we formally define the tangent space $ T_{\oFF}\mathcal{M} $ at $ \oFF $ by all vectors
\begin{equation}\label{tangentspace}
\begin{pmatrix}\xi\\\xi+(1-\lambda)\eta\\ \eta \end{pmatrix} \;\;\;\;\;\;\;\;\;\;\;\;\;\;\;\;\text{for} \;\;\eta\in\mathbb{R}\text{, }\xi\in L^2\left(\mathbb{R}^3\right).
\end{equation}

\begin{remark}
	Since the kernel of the operator $ L_{\lambda,\oF^1} $ are the vector $ \left(\xi_1,\;\xi_2,\;\eta\right)^\top $ satisfying the relation $ \xi_2=\xi_1+(1-\lambda)\eta $, we see that $  L_{\lambda, \overline{F}^1}\left(\xi_1,\xi_1+(1-\lambda)\eta, \eta \right)=0 $. Therefore testing the equation with these vectors we can eliminate the operator in equation \pef{expansion2}.\\
	
\end{remark}
It is interesting also to look at the orthogonal space of $ \oFF:=\left(\oF^1,\;\oF^2,\; \overline{Q}\right) $ in the Hilbert space $ \textbf{H} $. It is not difficult to see that, by the definition of inner-product and the structure of the tangent space, the orthogonal space to $ \mathcal{M}_+ $ is formally given by all vectors
\begin{equation}\label{orthogonalspace}
\begin{pmatrix}\alpha\\-\alpha\\ \theta \end{pmatrix} \;\;\;\;\;\;\;\;\;\;\text{such that} \;\;\int_{\mathbb{R}^3}\alpha\;dv=\int_{\mathbb{S}^2} \theta\;dn\;.
\end{equation}
The structure of the orthogonal space will play an important role for the proof of Theorem \ref{thmrigorous}. We will indeed see that any vector $ \FF>0 $ in a small neighborhood of the manifold $ \mathcal{M} $ can be decomposed as the sum of a vector in the manifold and a remainder, which has the same structure as the vectors in the orthogonal space.
\\

Let us now return to the formal derivation of the kinetic equation for $ \lambda $ and $ \oF^1 $. Let $ \xi_1\in C_c^\infty \left(\mathbb{R}^3\right)$ and $ \eta $ be a constant. Using the property that the self-adjoint operator $ L_{\lambda, \overline{F}^1} $ is zero on the tangent space $ T_{\oFF}\mathcal{M} $ we obtain that for any $ (G^1,G^2,H)\in \textbf{H} $
\begin{equation}\label{expansion6}
\left\langle \begin{pmatrix} \xi_1 \\\xi_1+(1-\lambda)\eta \\\eta\end{pmatrix}, L_{\lambda, \overline{F}^1}\begin{pmatrix} G^1\\ G^2\\ H\end{pmatrix}\right\rangle_\textbf{H}=0.
\end{equation}
Putting that into the initial equation \pef{expansion2} we obtain the following result
\begin{equation}\label{expansion7}
\left\langle \begin{pmatrix} \xi_1\\\xi_1+(1-\lambda)\eta \\\eta\end{pmatrix}, \begin{pmatrix} \partial_t \overline{F}^1\\ \partial_t \overline{F}^2\\ \partial_t \frac{\lambda}{1-\lambda}\end{pmatrix}\right\rangle_\textbf{H}=\left\langle \begin{pmatrix} \xi_1\\x_1+(1-\lambda)\eta \\\eta\end{pmatrix}, \begin{pmatrix} \mathbb{K}_1[\overline{F}^1,\lambda] \\\mathbb{K}_2[\overline{F}^1,\lambda]\\0\end{pmatrix}\right\rangle_\textbf{H}.
\end{equation}
From now on we define $ F:=\overline{F}^1 $.

Suppose now $ \eta=0 $. Keeping in mind that
\begin{equation*}
\int_{\mathbb{R}^3} dv\left(\partial_t (1+\lambda)F\right)\xi_1=\int_{\mathbb{R}^3}dv\; \xi_1\left(\mathbb{K}_1\left[F,\lambda\right]+\mathbb{K}_2\left[F,\lambda\right]\right)
\end{equation*} and since $ \xi_1\in C_c^\infty \left(\mathbb{R}^3\right) $ is arbitrary we conclude that
\begin{equation}\label{expansion8}
\partial_t \left(1+\lambda\right)F=\mathbb{K}_1\left[F,\lambda\right]+\mathbb{K}_2\left[F,\lambda\right].
\end{equation}
Similarly we now consider $ \xi=0 $ and $ \eta=\frac{1}{1-\lambda} $. Putting them in \pef{expansion7} we obtain
\begin{equation}\label{expansion9}
\int_{\mathbb{R}^3} \partial_t (\lambda F) \;dv \;+\partial_t \frac{\lambda}{1-\lambda}=\int_{\mathbb{R}^3} dv\; \mathbb{K}_2\left[F,\lambda\right].
\end{equation}
Using the weak formulation as in Section 1 we have $ \int_{\mathbb{R}^3} dv\left( \mathbb{K}_1\left[F,\lambda\right]+\mathbb{K}_2\left[F,\lambda\right]\right)=0  $, which implies \linebreak$  \int_{\mathbb{R}^3} dv\; \mathbb{K}_2\left[F,\lambda\right]=-\int_{\mathbb{R}^3}dv\; \mathbb{K}_1\left[F,\lambda\right] $. Using this and \pef{expansion8}, we can rewrite \pef{expansion9} as
\begin{equation}\label{expansion10}
\partial_t \frac{\lambda}{1-\lambda}=\partial_t\int_{\mathbb{R}^3} dv\; F -\int_{\mathbb{R}^3} dv\; \mathbb{K}_1\left[F,\lambda\right].
\end{equation}
We now rewrite \pef{expansion10} with the goal to find an equation for the derivative of $ \lambda $ only. Surely, since $ \partial_t \frac{\lambda}{1-\lambda}=\frac{\partial_t\lambda}{(1-\lambda)^2} $ we see that
\begin{equation}\label{expansion11}
\partial_t \lambda= (1-\lambda)^2 \int_{\mathbb{R}^3} \left(\partial_t F-\mathbb{K}_1\left[F,\lambda\right]\right)\;dv.
\end{equation}
By the multiplication rule we obtain also $ \partial_t \left(1+\lambda\right)F=(1+\lambda)\partial_tF+F\partial_t\lambda $. We integrate \pef{expansion8} 
\begin{equation}\label{expansion12}
\int_{\mathbb{R}^3} dv\; \partial_tF= \frac{1}{1+\lambda}\int_{\mathbb{R}^3} dv\;\left( \mathbb{K}_1\left[F,\lambda\right]+\mathbb{K}_2\left[F,\lambda\right]\right)-\frac{\partial_t \lambda}{1+\lambda}\int_{\mathbb{R}^3} dv\; F=-\frac{\partial_t \lambda}{1+\lambda}\int_{\mathbb{R}^3} dv\; F.
\end{equation}
Moreover, \pef{expansion12} in \pef{expansion11} implies 
\begin{equation}\label{expansion13}
\partial_t \lambda= -\frac{(1-\lambda)^2(1+\lambda)}{(1+\lambda)+(1-\lambda)^2\int_{\mathbb{R}^3}F}\int_{\mathbb{R}^3} \mathbb{K}_1\left[F,\lambda\right]dv.
\end{equation}
Substituting $ \partial_t \lambda $ in \pef{expansion8} with \pef{expansion13} we conclude the derivation of the equation for $ F $ and we summarize the new kinetic system we already defined in Section 1.3 and which we will study in the next Sections as follows
\begin{equation}\label{newsystem1}
\begin{split}
\partial_t \lambda&= -\frac{(1-\lambda)^2(1+\lambda)}{(1+\lambda)+(1-\lambda)^2\int_{\mathbb{R}^3}F}\int_{\mathbb{R}^3} \mathbb{K}_1\left[F,\lambda\right]dv\\
\partial_t F&= \frac{1}{1+\lambda}\left(\mathbb{K}_1\left[F,\lambda\right]+\mathbb{K}_2\left[F,\lambda\right]\right)+\frac{(1-\lambda)^2F}{(1+\lambda)+(1-\lambda)^2\int_{\mathbb{R}^3}F}\int_{\mathbb{R}^3} \mathbb{K}_1\left[F,\lambda\right]dv\\
\end{split}
\end{equation}
\subsection{Stability condition for the generalized Chapman-Enskog expansion}
We want now to briefly look at the stability condition for the generalized Chapman-Enskog expansion. To this end we start with the usual kinetic system as in \pef{kinsystem}. We expect $ F^1 $ to change in time $ t $ of order $ 1 $, hence it shall change very slowly on the time scale $ \eps t $ for small $ \eps>0 $. Due to this we assume $ F^1(v,t) $ to be constant in time. Moreover, according to the model we assume $ \mathbb{K}_2\left[F^1,F^2\right] $ to be very small and $ \int_{\mathbb{R}^3}dv\;F^1\geq\int_{\mathbb{R}^3}dv\;F^2 $. Multiplying \pef{kinsystem} by $ \varepsilon $ and defining $ \tau:=t/\varepsilon $ the system becomes
\begin{equation}\label{justification1}
\begin{split}
\partial_\tau F^2&=-\int_{\mathbb{S}^2} dn\;\left[ F^2+ Q\left(F^2-F^1\right)\right]\\
\partial_\tau Q&=\int_{\mathbb{R}^3}dv\;\left[F^2+ Q\left(F^2-F^1\right)\right]
\end{split}
\end{equation}
We would like to show the following proposition, which ensures the stability of the expansion we made in Section 2.2.
\begin{prop}
Let $ F^1 $ be constant in time and $ \int_{\mathbb{R}^3}dv\;F^1\geq\int_{\mathbb{R}^3}dv\;F^2 $. Let $ F^2 $ and $ Q $ satisfy equation \pef{justification1}. Then as $ \tau\to\infty $, the density $ Q $ approaches to a constant and the particle density $ F^2 $ satisfies $ F^2(v)=\frac{Q}{1+Q}F_1 $. 
\end{prop}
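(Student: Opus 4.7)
The plan is to exploit a hidden conservation law to reduce the problem to a scalar ODE, then bootstrap to control the $n$-dependence of $Q$ and the $v$-dependence of $F^2$ separately.

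\textbf{Step 1 (Macroscopic conservation).} Using $\mathcal{L}(\mathbb{S}^2)=1$, the system \pef{justification1} rewrites as
\begin{equation*}
\partial_\tau F^2(v) = -(1+\bar Q(\tau))F^2(v) + \bar Q(\tau)F^1(v), \qquad \partial_\tau Q(n) = M_2(\tau) - Q(n)(M_1 - M_2(\tau)),
\end{equation*}
where $M_i(\tau):=\int_{\mathbb{R}^3} F^i\,dv$ and $\bar Q(\tau):=\int_{\mathbb{S}^2} Q\,dn$. Since $F^1$ is constant, $M_1$ is constant, and integrating the two equations shows $M_2(\tau)+\bar Q(\tau)\equiv C$. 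Eliminating $\bar Q=C-M_2$ yields the scalar ODE $\dot M_2 = f(M_2)$ with $f(x)=x^2-(1+C+M_1)x+CM_1$.

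\textbf{Step 2 (Scalar dynamics).} The polynomial $f$ has discriminant $(1+C-M_1)^2+4M_1>0$, hence two real roots $M_2^-<M_2^+$, with $M_2^-$ stable. Since $f(M_1)=-M_1<0$ (assuming $M_1>0$; the case $M_1=0$ forces $F^1\equiv 0$ and is trivial), one has $M_2^-<M_1<M_2^+$. The hypothesis $M_2(0)\le M_1$ together with $\dot M_2|_{M_2=M_1}=-M_1<0$ shows $M_2(\tau)\le M_1$ for all $\tau\ge 0$, so $M_2$ remains in the basin of attraction of $M_2^-$. Phase-line analysis of $\dot M_2=(M_2-M_2^-)(M_2-M_2^+)$ gives exponential convergence $M_2(\tau)\to M_2^-$, and correspondingly $\bar Q(\tau)\to Q_\infty:=C-M_2^-$. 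In particular $\mu(\tau):=M_1-M_2(\tau)\to M_1-M_2^->0$.

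\textbf{Step 3 (Angular relaxation of $Q$).} For $D(n,\tau):=Q(n,\tau)-\bar Q(\tau)$, subtracting the integrated $\bar Q$-equation from the pointwise $Q$-equation kills the inhomogeneous term and leaves $\partial_\tau D(n,\tau)=-\mu(\tau)D(n,\tau)$, hence $D(n,\tau)=D(n,0)\exp\bigl(-\int_0^\tau\mu(s)\,ds\bigr)$. Since $\int_0^\infty\mu\,ds=\infty$ by Step 2, $D(\cdot,\tau)\to 0$ uniformly in $n$, and therefore $Q(n,\tau)\to Q_\infty$ uniformly.

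\textbf{Step 4 (Relaxation of $F^2$).} Define $G(v,\tau):=F^2(v,\tau)-\tfrac{\bar Q(\tau)}{1+\bar Q(\tau)}F^1(v)$. A short computation from Step 1 gives
\begin{equation*}
\partial_\tau G = -(1+\bar Q)\,G - \frac{\partial_\tau \bar Q}{(1+\bar Q)^2}F^1(v).
\end{equation*}
By Step 2 the forcing term decays exponentially (since $\partial_\tau\bar Q=-\dot M_2\to 0$ exponentially), while the dissipative coefficient satisfies $1+\bar Q\ge 1$; a Duhamel representation then yields $G(\cdot,\tau)\to 0$, so $F^2(v,\tau)\to\tfrac{Q_\infty}{1+Q_\infty}F^1(v)$, which is the claimed identity for the limiting constant $Q=Q_\infty$. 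The main technical obstacle lies in Step 2: one must show that the trajectory stays in the basin of the correct (smaller) equilibrium $M_2^-$ rather than escaping towards $M_2^+$, which is exactly where the hypotheses $M_1\ge M_2(0)$ and $M_1>0$ are used; once this is established the remaining steps reduce to essentially linear ODE analyses.
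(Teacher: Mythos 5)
Your proof is correct and follows the same overall strategy as the paper: integrate in $v$ and $n$ to obtain a $2\times 2$ ODE system for $(M_2,\bar Q)$ (the paper's $(\rho_2,I)$), notice that $M_2+\bar Q$ is conserved, reduce to a scalar Riccati-type ODE, and then relax the $n$- and $v$-dependent quantities via linear analysis. The only substantive difference is that you eliminate $\bar Q$ and study $\dot M_2 = f(M_2)$, whereas the paper eliminates $\rho_2$ and studies $\dot I = -I^2-(1-c_0+a)I+c_0$; these are linearly conjugate via $M_2 = C - \bar Q$, so your stable root $M_2^-$ is exactly $c_0 - I^\infty$. In the paper's parametrization the basin issue you flag in Step~2 disappears, because the unstable root $I^-$ is negative while $I(0)=\bar Q(0)\ge 0$ by non-negativity of $Q$; equivalently, $M_2(0)\le C < M_2^+$ holds automatically, so the hypothesis $M_2(0)\le M_1$ is not actually needed for the basin argument (it is, however, the physically natural positivity condition for $Q$ on the manifold). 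Your Steps~3 and~4 are slightly cleaner reformulations of the paper's Duhamel computations \pef{justification4}--\pef{justification5}: by subtracting the angular average you get a homogeneous ODE $\partial_\tau D = -\mu(\tau) D$ for $D=Q-\bar Q$ rather than having to pass to the limit inside an inhomogeneous variation-of-constants formula, and similarly $G=F^2-\tfrac{\bar Q}{1+\bar Q}F^1$ isolates the exponentially small forcing. Both approaches prove the same statement; yours organizes the relaxation step a bit more transparently.
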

\begin{proof}
Integrating in $ \mathbb{R}^3 $ or in $ \mathbb{S}^2 $, \pef{justification1} becomes
\begin{equation}\label{justification2}
\begin{split}
\partial_\tau\rho_2&=-\rho_2\left(I+1\right)+I a\\
\partial_\tau I=& I(\rho_2-a)+\rho_2
\end{split}
\end{equation}
where we used the notation $ a:=\int_{\mathbb{R}^3}dv\;F^1(v)=\, $constant, $ \rho_2(\tau):=\int_{\mathbb{R}^3}dv\;F^2(v) $ and $ I(\tau):=\int_{\mathbb{S}^2}dn\;Q(n) $. We now show that they converge in the long time to a stability solution, so for $ \tau\to\infty $ we have the relation $ \rho_2(I+1)=Ia $, which is only the case when both $ \rho_2 $ and $ I $ become constants. 

First of all we see from the equation \pef{justification2} that $ c_0:=\rho_2+I $ is a constant with $ c_0<I+a $. Hence we rewrite the equation for $ I $ as 
\begin{equation}\label{justification3}
\partial_\tau I=I(c_0-I-a)+c_0-I=-I^2-(1-c_0+a)I+c_0.
\end{equation}
But this is a simple ODE, whose solution $ I\to-\frac{(1-c_0+a)-\sqrt{(1-c_0+a)^2+4c_0}}{2}:=I^\infty>0 $ and therefore also $ \rho_2=c_0-I\to c_0-I^\infty=\frac{I^\infty a}{I^\infty+1}=:\rho_2^\infty $ converges to a constant whenever $ \tau $ goes to infinity. This implies the claim. Indeed for $ \tau\to\infty $ we solve the equation $\partial_\tau Q=Q(\rho_2-a)+\rho_2 $ in the following way. Recall that $ \rho_2-a\leq0 $ and that $ \rho_2 $ is bounded. Since $ \rho_2\to\rho_2^\infty $ as $ t\to\infty $, there exists some $ T>0 $ such that $ \rho_2\leq\rho_2^\infty-\varepsilon $ for some small $ \varepsilon>0 $. For all these reasons we can compute
\begin{equation}\label{justification4}
\begin{split}
Q(t)&=Q_0\exp\left(\int_{0}^{t} \left(\rho_2(s)-a\right)\; ds\right)+\int_{0}^t	\exp\left(\int_{s}^{t} \left(\rho_2(\tau)-a\right)\; d\tau\right)\rho_2(s)ds\\
&\underset{t\to\infty}{\longrightarrow}\lim\limits_{t\to\infty}\int_{0}^tds\;e^{-\frac{a}{I^\infty+1}(t-s)}\frac{I^\infty a}{I^\infty+1}\\
&= I^\infty.
\end{split}
\end{equation}
A similar computation for the particle density $ F^2 $ shows the desired claim, since \linebreak$ \partial_\tau F^2=-(1+I)F^2+IF^1 $
\begin{equation}\label{justification5}
F^2(t,v)\underset{t\to\infty}{\longrightarrow}\lim\limits_{t\to\infty}\int_{0}^t ds\;e^{-(1+I^\infty)(t-s)}I^\infty F^1=\frac{I^\infty}{I^\infty+1}F^1=\lim\limits_{t\to\infty}\frac{Q}{Q+1}F^1.
\end{equation}

Hence we have shown the desired result.
\end{proof}
\section{Well-posedness theory for the fast radiation limit}
We anticipated that one can prove a well-posedness theory for equation \pef{kinsystem}. Indeed, following the standard proof as in \cite{fournier,cooling,mischler} one can prove existence and uniqueness of weak solutions for the kinetic system. On the other hand, by mean of the ODE's theory in general Banach spaces one can prove as for example in \cite{Bressan,Gamba} a strong well-posedness theory. Finally, the long time convergence to the unique equilibrium is achieved with the standard method as in \cite{arkerydI,arkerydII}. One can read these results in the first part of \cite{tesi}. In Section 3 we will prove similar results for the kinetic equation \pef{newsystem1} we formally derived in Section 2. We start the analysis by looking at its properties and giving some definitions. Afterwards we will develop a well-posedness theory, which we recall to be important for the proof of Theorem \ref{thmrigorous} in Section 4.
\subsection{Properties of the kinetic system}
The solutions for the equation \pef{newsystem1} satisfy some conservation laws. Indeed, let $ F(t,v)\geq 0 $ and $ \lambda(t) $ be solution with $F\in L_2^1\left(\mathbb{R}^3\right)$. Then putting $ \xi_1=1 $ and $ \eta=0 $ in \pef{expansion7} we obtain the conservation of mass for $ F $
\begin{equation}\label{newmass}
\partial_t \int_{\mathbb{R}^3} dv\; (1+\lambda)F=0.
\end{equation}
Here we also used the properties of the collision terms when integrated. Similarly taking $ \xi_1=v_i $ for $ i=1,2,3 $ we deduce the conservation of momentum
\begin{equation}\label{newmomentum}
\partial_t \int_{\mathbb{R}^3} dv\; (1+\lambda)Fv_i=0.
\end{equation}
Finally, we also have the conservation of energy in the following sense. Let us consider $ \xi_1=\arrowvert v\arrowvert^2 $ and $ \eta=\frac{2\varepsilon_0}{1-\lambda} $, then we have
\begin{equation}\label{newenergy}
\partial_t \left(\int_{\mathbb{R}^3} dv\left[ \left(\arrowvert v\arrowvert^2(1+\lambda)F\right)+2\varepsilon_0\lambda F\right]\right)+ 2\varepsilon_0\partial_t\frac{\lambda}{1-\lambda}=0.
\end{equation}

Clearly we shall work on a different space than $ \mathcal{X} $.
\begin{definition}
	We define the Banach space $ X:=\left\{\left(\lambda, F\right): \lambda\in\mathbb{R},\; F\in L_2^1\left(\mathbb{R}^3\right)\right\} $ and the subset \linebreak $  X_+:=\left\{\left(\lambda, F\right)\in X: \lambda\in[0,1),\; F\geq0\right\} $. We will also consider the subspace $ \tilde{X}:=\left\{\left(\lambda, F\right)\in X: F\in L_4^1\left(\mathbb{R}^3\right)\right\} $. 
\end{definition}

With these spaces we can define the following operator, that will simplify the notation.
\begin{definition}\label{defL}
	We define the operator $ T:\tilde{X}\to X $ by
	\begin{equation*}
	T\begin{pmatrix}\lambda \\ F\end{pmatrix}=\begin{pmatrix} -\frac{(1-\lambda)^2(1+\lambda)}{(1+\lambda)+(1-\lambda)^2\int_{\mathbb{R}^3}F}\int_{\mathbb{R}^3} \mathbb{K}_1\left[F,\lambda\right]dv \\ \frac{1}{1+\lambda}\left(\mathbb{K}_1\left[F,\lambda\right]+\mathbb{K}_2\left[F,\lambda\right]\right)+\frac{(1-\lambda)^2F}{(1+\lambda)+(1-\lambda)^2\int_{\mathbb{R}^3}F}\int_{\mathbb{R}^3} \mathbb{K}_1\left[F,\lambda\right]dv	\end{pmatrix}.
	\end{equation*}
\end{definition}

\begin{remark}
	With this operator we can rewrite the kinetic system \pef{newsystem1} as\begin{equation*}
	\begin{split}
	\partial_t \lambda &=T_1(\lambda, F)\\
	\partial_t F&= T_2(\lambda, F)
	\end{split}
	\end{equation*}
\end{remark}
Next we define the notion of weak solutions, energy and entropy.
\begin{definition}\label{solution}
	We call a pair $ \left(\lambda,F\right)\in C\left([0,\infty), X\right) $ a weak solution to \pef{newsystem1} with initial values \linebreak$ \left(\lambda_0, F_0\right)\in X_+ $ if  $ \left(\lambda,F\right)\in X_+ $ for all $ t\geq 0 $ and if
	\begin{equation*}
	\lambda(t)=\int_0^t T_1(\lambda, F) \;d\tau+\lambda_0
	\end{equation*}
	and if for all $ \varphi\in C_c^\infty\left(\mathbb{R}^3\right) $
	\begin{equation*}
	\partial_t\int_{\mathbb{R}^3} dv\; F(t,v)\varphi(v)=\int_{\mathbb{R}^3} T_2(\lambda, F)\varphi(v)\;dv.
	\end{equation*}
\end{definition}

\begin{definition}\label{energyentropy}
	Given a solution to \pef{newsystem1} $ (\lambda, F) $ in the sense of Definition \ref{solution} we define the energy $ E(t) $ by
	\begin{equation}
	E(t)=\int_{\mathbb{R}^3} dv\left[ \left(1+\arrowvert v\arrowvert^2(1+\lambda)F\right)+2\varepsilon_0\lambda F\right]+ 2\varepsilon_0\frac{\lambda}{1-\lambda}.
	\end{equation}
	We already know that $ E(t)=E(0)=\text{constant} $ for strong solutions. As in the kinetic system \pef{kinsystem}, this holds for weak solutions with bounded fourth moment too. We also define the entropy of the new system \pef{newsystem1} by
	\begin{equation}\label{newentropy}
	\mathcal{H}(t)=\int_{\mathbb{R}^3} \; dv\left[(1+\lambda)F\log(F)+F\lambda\log(\lambda)\right]+\frac{\lambda
	}{1-\lambda}\log(\lambda)+\log(1-\lambda).
	\end{equation}
\end{definition}
One of the ways in which we will use the conservation of energy is in the proof of the boundedness of $ \lambda $. Indeed it provides $ 0\leq \lambda(t)\leq \lambda_0\left(E(0)\right)<1 $. The entropy can be derived from the Definition \ref{entropydef} under the assumption $ \frac{A_0}{B_0}=1 $. Moreover, it decreases, as we will show in Proposition \ref{prop1}.
\\

We will assume throughout this Section that the initial data satisfies
\begin{equation}\label{Assumption}
F_0(v)\geq Ce^{-\beta\arrowvert v\arrowvert^2}
\end{equation}
for some constant $ C>0 $ and $ \beta>0 $. This is a strong assumption, but it allows us to have a global well-posedness without using too many technical arguments. In this paper we will not focus on the most general well-posedness theory, but on the derivation of the effective kinetic equation \pef{newsystem1} for the fast radiation limit and on the rigorous proof of Theorem \ref{thmrigorous} in Section 4, for which a well-posedness theory is required.

With assumption \pef{Assumption} we can estimate for any solution to \pef{newsystem1} which satisfies $ \Arrowvert F\Arrowvert_{L_2^1}\leq E(0) $ that
\begin{equation}\label{Assumption1}
\begin{split}
\partial_t F(v)\geq& -(1+\lambda)\left(\int_{\mathbb{R}^3} dv_2 \int_{\mathbb{S}^2} d\omega\; B_{el}(v,v_2)F(v_2)\right)F(v)\\
& -\frac{2}{1+\lambda}\left(\int_{\mathbb{R}^3} d\ov_2 \int_{\mathbb{S}^2} d\omega\; B_{ne}^{12}(v,\ov_2)F(\ov_2)\right)F(v)\\
& -\frac{2\lambda}{1+\lambda}\left(\int_{\mathbb{R}^3} d\ov_3 \int_{\mathbb{S}^2} d\omega\; B_{ne}^{34}(\ov_3,v)F(\ov_3)\right)F(v)\\
\;\; & -C(E(0), \varepsilon_0)F(v)\\
\geq&-C(E(0), \varepsilon_0)\left(1+\arrowvert v\arrowvert\right)F(v),
\end{split}
\end{equation}
where we used the properties $ 0\leq\frac{(1-\lambda)^2}{(1+\lambda)+(1-\lambda)^2\int_{\mathbb{R}^3}F}\leq1 $ and
\begin{equation}\label{Assumption2}
\begin{split}
&\int_{\mathbb{R}^3}dv_1\int_{\mathbb{R}^3} dv_2 \int_{\mathbb{S}^2} d\omega\; (1+\lambda)B_{el}(v_1,v_2)F(v_2)F(v_1)\\
+&2\int_{\mathbb{R}^3}d\ov_1\int_{\mathbb{R}^3} d\ov_2 \int_{\mathbb{S}^2} d\omega\; B_{ne}^{12}(\ov_1,\ov_2)F(\ov_2)(\ov_2)\\
+&\lambda\int_{\mathbb{R}^3}d\ov_4\int_{\mathbb{R}^3} d\ov_3 \int_{\mathbb{S}^2} d\omega\; B_{ne}^{34}(\ov_3,\ov_4)F(\ov_3)F(\ov_4)\\
\leq&C(E(0), \varepsilon_0)
\end{split}
\end{equation}
coming from the estimates \pef{usual estimate 3}, \pef{usual estimate}, \pef{usual estimate 2} for the collisional kernels.

The estimate \pef{Assumption2} implies that we have the following estimate for the solution $ F $
\begin{equation}\label{Assumption4}
F(t,v)\geq Ce^{-\beta \arrowvert v\arrowvert^2-C(E(0),\varepsilon_0)\left(1+\arrowvert v\arrowvert t\right)}
\end{equation}
and so we conclude that for every $ t\geq 0 $ there exists some constant $ \tilde{\beta}>0 $ and $ \tilde{C} $
\begin{equation}\label{Assumption5}
F(t,v)\geq \tilde{C}e^{-\tilde{\beta} \arrowvert v\arrowvert^2}.
\end{equation}
This assumption permits us to justify the formal calculation for the time derivative of the entropy. Moreover, it shows that if the initial value satisfies $ F_0>0 $ the also for the solution holds $ F>0 $.
\begin{prop}\label{prop1}
	Assume that the initial value $ (\lambda_0, F_0) $ satisfies the assumption \pef{Assumption} and that its energy is bounded, i.e $ E(0)<\infty $. If $ F $ has bounded fourth moment, the solution $ (\lambda, F) $ satisfies $ \lambda(t)\in[0,1) $ for all $ t\geq 0 $. Moreover if for a strong solution $ \Arrowvert F\Arrowvert_{L_4^1}\leq \kappa $ for some $ 0<\kappa<\infty $ and for all $ t\geq 0 $, then $ \partial_t\mathcal{H}(t)\leq 0 $.
	\begin{proof}
		We start with the first claim: the well-definedness of $ \lambda $. Using the definition of energy and its conservation we see
		\begin{equation*}
		\frac{\lambda}{1-\lambda}\leq \frac{E(t)}{2\varepsilon_0}=\frac{E(0)}{2\varepsilon_0}
		\end{equation*}
		This implies that $ \lambda(t)<1 $ for all $ t $, since $ \lambda\leq\frac{E(0)}{2\varepsilon_0+E(0)}<1 $.
		
		Assuming now $ \lambda(t_0)=0 $ for some $ t_0\in[0,1) $, the equation \pef{newsystem1} implies 
		\begin{equation*}
		\begin{split}
		\partial_t \lambda(t_0)&=-\frac{1}{1+\int_{\mathbb{R}^3}F}\int_{\mathbb{R}^3} \mathbb{K}_1\left[F,0\right]dv\\
		&=-\frac{1}{1+\int_{\mathbb{R}^3}F}\int_{\mathbb{R}^{12}} d\ov_1 d\ov_2 d\ov_3 d\ov_4 \delta(\ov_1+\ov_2-\ov_3-\ov_4)\delta(\arrowvert\ov_1\arrowvert^2+\arrowvert\ov_2\arrowvert^2-\arrowvert\ov_3\arrowvert^2-\arrowvert\ov_4\arrowvert^2-2\varepsilon_0)\\
		&\;\;\;\;\;\;\;\;\;\;\;\;\;\;\;\;\;\;\;\;\;\;\;\;\;\;\;W_{ne}(\ov_1,\ov_2;\ov_3,\ov_4) \left(-F(\ov_2)F(\ov_1)\right)\\
		&\geq 0.
		\end{split}
		\end{equation*}
		This conclude the first claim, indeed we have just seen that $ \lambda(t)\in \left[0,\frac{E(0)}{2\varepsilon_0+E(0)}\right] $ for all $ t $.
		
		For the second claim, we shall simply differentiate the entropy and using the kinetic system \pef{newsystem1} we can conclude that it is non-increasing.
		\begin{equation}\label{3.11}
		\begin{split}
		\partial_t \mathcal{H}(t)=&\int_{\mathbb{R}^3} \; dv \left[(1+\lambda)\log(F)\partial_tF+ F\log(F)\partial_t\lambda+(1+\lambda)\partial_tF+\lambda\log(\lambda)\partial_tF+F\log(\lambda)\partial_t \lambda+F\partial_t\lambda\right]\\
		&\;\;\;\;+\frac{\partial_t\lambda
		}{(1-\lambda)^2}\log(\lambda)+\frac{\partial_t\lambda}{1-\lambda}-\frac{\partial_t\lambda}{1-\lambda}\\
		=&\int_{\mathbb{R}^3} \; dv \left[\partial_t\left((1+\lambda)F\right)+\partial_t\left((1+\lambda)F\right) \log(F)+\partial_t\left(\lambda F\right)\log(\lambda)\right]+\log(\lambda)\partial_t\frac{\lambda}{1-\lambda}\\
		=&\int_{\mathbb{R}^3} \; dv \left(\mathbb{K}_1\left[F,\lambda\right]+\mathbb{K}_2\left[F,\lambda\right]\right)\log(F)+\int_{\mathbb{R}^3} \; dv\, \mathbb{K}_2\left[F,\lambda\right]\log(\lambda)\\
		=&\frac{1}{4}\int_{\mathbb{R}^3}dv_1\int_{\mathbb{R}^3}dv_2\;\int_{\mathbb{S}^2}d\omega\;B_{el}(v_1,v_2)\left(1+\lambda\right)^2 \left(F(v_3)F(v_4)-F(v_1)F(v_2)\right)\log\left(\frac{F(v_1)F(v_2)}{F(v_3)F(v_4)}\right)\\
		&+\int_{\mathbb{R}^3}d\ov_1\int_{\mathbb{R}^3}d\ov_2\;\int_{\mathbb{S}^2}d\omega\;B^{12}_{ne}(\ov_1,\ov_2)\left(\lambda F(\ov_3)F(\ov_4)-F(\ov_1)F(\ov_2)\right)\log\left(\frac{F(\ov_1)F(\ov_2)}{\lambda F(\ov_3)F(\ov_4)}\right)\\	
		\leq 0	,
		\end{split}
		\end{equation}
		where we used that the integral of the sum of the collision terms is zero and equation \pef{expansion9}, the weak formulation of the kinetic equation and the symmetry for the elastic collision. Now the inequality
		\begin{equation*}
		(z-y)\log\left(\frac{y}{z}\right)\leq 0\;\;\;\;\;\;\forall z,y>0
		\end{equation*}
		implies that each integrand  on the right hand side of \pef{3.11} is negative, which implies that the entropy is decreasing.
		
		In order to justify more rigorously this calculation  we remark that $ \Arrowvert F\Arrowvert_{L_4^1}\leq \kappa $ for every $ t\geq 0 $ and that for every collisional kernel $ B\in \{B_{el}, B_{ne}^{ij}\} $ there exists some constant $ C>0 $ such that \linebreak$ 0\leq B\leq C\left( 1+ \arrowvert v\arrowvert+\arrowvert w\arrowvert\right) $. Hence we know that for every $ i=1,2 $ we have that $ \mathbb{K}_i\left[F,\lambda\right]~\in~ L_3^1\left(\mathbb{R}^3\right) $, which implies that $ T_1\left(\lambda, F\right)\in L^\infty $ and $ T_2\left(\lambda, F\right)\in L^1\left(\mathbb{R}^3\right) $. Given the assumption \pef{Assumption} we know that $ \arrowvert \log\left(F\right)\arrowvert\leq C \left( \arrowvert v\arrowvert^2+1\right) $ for some $ C $ and for all $ 0\leq s\leq t $. Therefore using the Leibniz rule we can compute for all $ \varepsilon>0 $
		\begin{equation}\label{assumption6}
		\partial_t F\log\left(F+\varepsilon\right)= T_2\left(\lambda, F\right)\log\left(F+\varepsilon\right)+\frac{F}{F+\varepsilon}T_2\left(\lambda, F\right)
		\end{equation}
		and also
		\begin{equation}\label{assumption8}
		\partial_t \lambda F\log\left( \lambda F+\varepsilon\right)= \left[FT_1\left(\lambda, F\right)+\lambda T_2\left(\lambda, F\right) \right]\log\left(\lambda F+\varepsilon\right)+\frac{\lambda F}{\lambda F+\varepsilon}\left[FT_1\left(\lambda, F\right)+\lambda T_2\left(\lambda, F\right) \right].
		\end{equation}	
		These imply using Fubini that for any fixed $ t$
		\begin{equation}\label{assumption7}
		\begin{split}
		\int_{\mathbb{R}^3} dv\;& F(t,v)\log\left(F+\varepsilon\right)(t,v)=\int_{\mathbb{R}^3} dv\; F_0(v)\log\left(F_0+\varepsilon\right)(v)\\	
		&+\int_{0}^tds\int_{\mathbb{R}^3}dv\;T_2\left(\lambda, F\right)(s,v)\log\left(F+\varepsilon\right)(s,v)
		+\frac{F(s,v)}{F(s,v)+\varepsilon} T_2\left(\lambda, F\right)(s,v)
		\end{split}
		\end{equation}
		and similarly
		\begin{equation}\label{assumption9}
		\begin{split}
		&\int_{\mathbb{R}^3} dv\; \lambda F(t,v)\log\left(\lambda F+\varepsilon\right)(t,v)=\int_{\mathbb{R}^3} dv\; \lambda_0F_0(v)\log\left(\lambda_0F_0+\varepsilon\right)(v)\\
		&+\int_{0}^tds\int_{\mathbb{R}^3}dv\;\bigg(\left[FT_1\left(\lambda, F\right)+\lambda T_2\left(\lambda, F\right) \right](s,v)\log\left(\lambda F+\varepsilon\right)(s,v)\\
		&\;\;\;\;\;\;\;\;\;\;\;\;\;\;\;\;\;\;\;\;\;\;\;\;\;\;\;\;\;+\frac{\lambda F(s,v)}{\lambda F(s,v)+\varepsilon}\left[FT_1\left(\lambda, F\right)+\lambda T_2\left(\lambda, F\right) \right](s,v)\bigg).
		\end{split}
		\end{equation}
		Here everything is uniformly integrable, and therefore letting then $ \varepsilon\to 0 $ we conclude the formula in \pef{3.11}, since moreover the function $ f(x)=\frac{x}{1-x}\log(x)+\log(1-x) $ is already well-defined and differentiable for $ x\in[0,1) $.\\
	\end{proof}
\end{prop}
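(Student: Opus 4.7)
For the first claim I would derive $\lambda(t)\in[0,1)$ in two independent steps. For the upper bound I would exploit energy conservation: since $E(t)=E(0)$ is finite and the term $2\varepsilon_0 \lambda/(1-\lambda)$ appears with positive sign in $E$ together with other non-negative contributions, I obtain $\lambda/(1-\lambda)\leq E(0)/(2\varepsilon_0)$, i.e.\ $\lambda\leq E(0)/(2\varepsilon_0+E(0))<1$ uniformly in time. For the lower bound I would argue pointwise at any hypothetical touching time $t_0$ with $\lambda(t_0)=0$ by evaluating the right-hand side of the ODE for $\lambda$ in \pef{newsystem1}. Setting $\lambda=0$ kills every summand of $\mathbb{K}_1[F,0]$ that involves $F^2=\lambda F$; the remaining nonelastic contributions, once integrated in $v$ using the weak delta-function form of the kernel, collapse to the manifestly non-positive expression $-\int \delta(\cdots)\delta(\cdots)W_{ne}\,F(\ov_1)F(\ov_2)\,d\ov_1\cdots d\ov_4$. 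Combined with the negative prefactor in front of $\int \mathbb{K}_1\,dv$ in the $\lambda$-equation, this yields $\partial_t\lambda(t_0)\geq 0$, forbidding $\lambda$ from crossing below zero.

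For the second claim I would differentiate $\mathcal{H}(t)$ formally by the Leibniz rule, substitute $\partial_t F = T_2(\lambda,F)$ and $\partial_t \lambda = T_1(\lambda,F)$, and then extract the purely algebraic cancellations. The identity $\int(\mathbb{K}_1+\mathbb{K}_2)\,dv=0$ (mass conservation through the weak formulation) kills the $\log$-free contributions, while identity \pef{expansion9} allows me to trade $\partial_t(\lambda/(1-\lambda))$ against $\int\mathbb{K}_2\,dv$. What remains is
\begin{equation*}
\partial_t\mathcal{H}(t)=\int_{\mathbb{R}^3} dv\,\bigl(\mathbb{K}_1[F,\lambda]+\mathbb{K}_2[F,\lambda]\bigr)\log F + \int_{\mathbb{R}^3} dv\,\mathbb{K}_2[F,\lambda]\log\lambda.
\end{equation*}
I then symmetrize each Boltzmann-type operator by interchanging pre- and post-collisional velocities: the elastic contribution, weighted by $(1+\lambda)^2$, produces an integrand of the form $B_{el}(F(v_3)F(v_4)-F(v_1)F(v_2))\log\!\bigl(F(v_1)F(v_2)/(F(v_3)F(v_4))\bigr)$, and the nonelastic contribution, after the change of variables \pef{nonelasticrel2}--\pef{nonelasticrel3}, becomes $B_{ne}^{12}(\lambda F(\ov_3)F(\ov_4)-F(\ov_1)F(\ov_2))\log\!\bigl(F(\ov_1)F(\ov_2)/(\lambda F(\ov_3)F(\ov_4))\bigr)$. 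The pointwise inequality $(z-y)\log(y/z)\leq 0$ for $y,z>0$ then closes the sign.

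The main obstacle will be justifying the formal differentiation of $\mathcal{H}$, since $\log F$ blows up on the zero set of $F$ and $F$ is not a priori smooth in $t$ pointwise. The assumption \pef{Assumption} saves the day: the linear differential inequality \pef{Assumption4} propagates the Gaussian lower bound to give $F(t,v)\geq \tilde{C}e^{-\tilde{\beta}|v|^2}$ as in \pef{Assumption5}, so $|\log F(t,v)|\leq C(1+|v|^2)$, and combined with the hypothesis $\|F(t)\|_{L^1_4}\leq\kappa$ together with the kernel estimates \pef{usual estimate 3}--\pef{usual estimate 2} (which yield $T_1(\lambda,F)\in L^\infty_t$ and $T_2(\lambda,F)\in L^1_v$) every quantity in the calculation is uniformly integrable. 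To avoid handling $\partial_t\log F$ directly, I would first establish the integrated identity for the regularized functionals $\int F\log(F+\varepsilon)\,dv$ and $\int \lambda F\log(\lambda F+\varepsilon)\,dv$ via Fubini and the explicit Leibniz rules in the spirit of \pef{assumption6}--\pef{assumption9}, and only then send $\varepsilon\to 0$ by dominated convergence to recover the identity for $\partial_t\mathcal{H}$.
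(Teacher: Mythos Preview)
Your proposal is correct and follows essentially the same route as the paper: the upper bound on $\lambda$ via energy conservation, the lower bound via the sign of $\partial_t\lambda$ at a touching time, the formal differentiation of $\mathcal{H}$ reduced through $\int(\mathbb{K}_1+\mathbb{K}_2)=0$ and \pef{expansion9} to the two symmetrized dissipation integrals, and the rigorous justification by propagating the Gaussian lower bound, using the $L^1_4$ control to place $T_2$ in $L^1$, and passing to the limit in the $\varepsilon$-regularized functionals. There is no substantive difference in strategy or in the key technical ingredients.
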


We proceed now with the well-posedness theory. We will adapt here the standard method for the classical Boltzmann equation. We will use a weakly convergent sequence of approximating solutions. This is the method used in the seminal paper \cite{arkerydI}.
\subsection{The cut-off kinetic system}
We define some auxiliary cut-off kernels. We define for $ n\in\mathbb{N} $
\begin{equation}\label{cutoff1}
B_{el,n}(v,w):=\min\left(\arrowvert v-w\arrowvert,n\right),
\end{equation}
\begin{equation}\label{cutoff2}
B_{ne,n}^{12}(\ov_1,\ov_2):=B_{ne}^{12}(\ov_1,\ov_2)\rchi_{\arrowvert\ov_1-\ov_2\arrowvert\leq n}\left(\ov_1,\ov_2\right),
\end{equation}
\begin{equation}\label{cutoff3}
B_{ne,n}^{34}(\ov_3,\ov_4):=B_{ne}^{34}(\ov_3,\ov_4)\rchi_{\arrowvert\ov_3-\ov_4\arrowvert\leq \sqrt{n^2-4\varepsilon_0}}\left(\ov_3,\ov_4\right).
\end{equation}

It is not difficult to see that the following is true.
\begin{prop}\label{propcutoff}
	For the velocities of the nonelastic collisions the following is equivalent\begin{equation*}
	\arrowvert\ov_1-\ov_2\arrowvert\leq n\iff\arrowvert\ov_3-\ov_4\arrowvert\leq\sqrt{n^2-4\varepsilon_0} 
	\end{equation*}
\begin{proof}
We recall the definition of pre- and post-collisional nonelastic velocities
\begin{equation*}
\ov_{3,4}=\frac{\ov_1+\ov_2}{2}\pm\omega\sqrt{\frac{\arrowvert \ov_1-\ov_2\arrowvert^2}{4}-\varepsilon_0},
\end{equation*}
which implies that $ \arrowvert\ov_3-\ov_4\arrowvert=\sqrt{\arrowvert \ov_1-\ov_2\arrowvert^2-4\varepsilon_0} $. Moreover we also have the relation
\begin{equation*}
\ov_{1,2}=\frac{\ov_3+\ov_4}{2}\pm\omega'\sqrt{\frac{\arrowvert \ov_3-\ov_4\arrowvert^2}{4}+\varepsilon_0},
\end{equation*}
which again implies $ \arrowvert\ov_1-\ov_2\arrowvert=\sqrt{\arrowvert \ov_3-\ov_4\arrowvert^2+4\varepsilon_0} $.
Hence considering $ \arrowvert\ov_1-\ov_2\arrowvert\leq n $ respectively \linebreak$ \arrowvert\ov_3-\ov_4\arrowvert\leq\sqrt{n^2-4\varepsilon_0} $ in the above relation we conclude the proof.
\end{proof}
\end{prop}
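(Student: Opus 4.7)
The plan is to exploit the explicit formulas already recorded as \pef{nonelasticrel2} and \pef{nonelasticrel3} for pre- and post-collisional velocities in a nonelastic collision, from which the relative speed identity $|\ov_3-\ov_4|^2 = |\ov_1-\ov_2|^2 - 4\varepsilon_0$ follows by a direct computation. Once this identity is available, both implications of the claimed equivalence reduce to squaring one side of the stated inequality.

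First I would take \pef{nonelasticrel2}, which gives
\begin{equation*}
\ov_3-\ov_4 = 2\omega\sqrt{\frac{|\ov_1-\ov_2|^2}{4}-\varepsilon_0},
\end{equation*}
and use $\omega\in\mathbb{S}^2$ to conclude $|\ov_3-\ov_4| = \sqrt{|\ov_1-\ov_2|^2 - 4\varepsilon_0}$. Note that this computation is only meaningful if $|\ov_1-\ov_2|^2 \geq 4\varepsilon_0$, which is exactly the physical requirement for a nonelastic collision to be possible; throughout the argument I restrict to this admissible range, on which the square roots are real. Symmetrically, from \pef{nonelasticrel3} one obtains $|\ov_1-\ov_2| = \sqrt{|\ov_3-\ov_4|^2 + 4\varepsilon_0}$, so the two velocity differences determine each other.

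Next I would simply chain equivalences. For the forward direction, $|\ov_1-\ov_2|\leq n$ with both sides non-negative is equivalent to $|\ov_1-\ov_2|^2 \leq n^2$, which by subtracting $4\varepsilon_0$ is equivalent to $|\ov_3-\ov_4|^2 \leq n^2 - 4\varepsilon_0$, i.e.\ $|\ov_3-\ov_4|\leq \sqrt{n^2-4\varepsilon_0}$ (provided $n^2 \geq 4\varepsilon_0$, which is forced by the admissibility condition above). The converse implication is identical, read in reverse using the second relation.

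There is essentially no obstacle; the only subtlety worth flagging is the compatibility condition $n^2 \geq 4\varepsilon_0$. For $n^2 < 4\varepsilon_0$ the right-hand set in \pef{cutoff3} is empty and the left-hand set in \pef{cutoff2} is empty as well, since any $(\ov_1,\ov_2)$ with $|\ov_1-\ov_2|\leq n < 2\sqrt{\varepsilon_0}$ cannot contribute to a nonelastic collision anyway; thus the equivalence is vacuously true in that regime. In the relevant regime $n^2 \geq 4\varepsilon_0$, both square roots are real and the squaring manipulation above goes through cleanly, completing the proof.
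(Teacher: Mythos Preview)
Your proof is correct and follows essentially the same approach as the paper: both derive the identity $|\ov_3-\ov_4|^2 = |\ov_1-\ov_2|^2 - 4\varepsilon_0$ directly from the explicit relations \pef{nonelasticrel2}--\pef{nonelasticrel3} and then square to obtain the equivalence. Your additional remarks on the admissibility conditions $|\ov_1-\ov_2|^2 \geq 4\varepsilon_0$ and $n^2 \geq 4\varepsilon_0$ are a nice extra bit of care that the paper leaves implicit.
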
 
An important implication of Proposition \ref{propcutoff} is that the following sets are the same
\begin{equation}\label{cutoff4}
\begin{split}
&\left\{(v_1,v_2,v_3,v_4)\in\mathbb{R}^{12}: \arrowvert v_1- v_2\arrowvert\leq n,\;v_{3,4}=\frac{v_1+v_2}{2}\pm\omega\sqrt{\frac{\arrowvert v_1-v_2\arrowvert^2}{4}-\varepsilon_0},\;\omega\in\mathbb{S}^2 \right\}\\
=&\left\{(v_1,v_2,v_3,v_4)\in\mathbb{R}^{12}: \arrowvert v_3- v_4\arrowvert\leq \sqrt{n^2-4\varepsilon_0},\;v_{1,2}=\frac{v_3+v_4}{2}\pm\omega'\sqrt{\frac{\arrowvert v_3-v_4\arrowvert^2}{4}+\varepsilon_0},\;\omega'\in\mathbb{S}^2 \right\}\\
:=&\Omega_n.
\end{split}
\end{equation}

We can now define the cut-off collisions term $ \mathbb{K}_{i,n} $ for $ i=1,2 $ replacing of the usual hard sphere's kernels with the cut-off kernel as in \pef{cutoff1}, \pef{cutoff2} and \pef{cutoff3}. We also define a new operator $ T_n:\tilde{X}\to X $ by
\begin{equation}\label{T_n}
T_n\begin{pmatrix}\lambda \\F\end{pmatrix}=\begin{pmatrix}-\frac{(1-\lambda)^2(1+\lambda)}{(1+\lambda)+(1-\lambda)^2\int_{\mathbb{R}^3}F}\int_{\mathbb{R}^3} \mathbb{K}_{1,n}\left[F,\lambda\right]dv\\ \frac{1}{1+\lambda}\left(\mathbb{K}_{1,n}\left[F,\lambda\right]+\mathbb{K}_{2,n}\left[F,\lambda\right]\right)+\frac{(1-\lambda)^2F}{(1+\lambda)+(1-\lambda)^2\int_{\mathbb{R}^3}F}\int_{\mathbb{R}^3} \mathbb{K}_{1,n}\left[F,\lambda\right]dv \end{pmatrix}.
\end{equation}
\begin{lemma}\label{lemcutoff1}
A solution $ (\lambda_n, F_n)\in C^1\left([0,1), X\right) $ of the equation $ \partial_t\begin{pmatrix}\lambda_n \\F_n\end{pmatrix}=T_n\begin{pmatrix}\lambda_n\\F_n\end{pmatrix} $ still satisfies the conservation of mass, momentum and energy as written in \pef{newmass}, \pef{newmomentum} and \pef{newenergy}.
\begin{proof}
We first find a formulation for $ \int_{\mathbb{R}^3}\;dv\;\left[ \mathbb{K}_{1,n}\left[F_n,\lambda_n\right]\varphi^1(v)+\mathbb{K}_{2,n}\left[F_n,\lambda_n\right]\varphi^2(v)\right] $. For the elastic collision terms, it is not difficult to see using the symmetry of the kernels with respect to pre- and post-collisional velocities that
\begin{equation*}
\begin{split}
&\int_{\mathbb{R}^3}\;dv\; \mathbb{K}_{1,n}^{elastic}\left[F_n,\lambda_n\right]\varphi^1(v)\\
=&(1+\lambda_n)\int_{\mathbb{R}^3} dv_1 \int_{\mathbb{R}^3}\;dv_2\int_{\mathbb{S}^2} d\omega\; \min(\arrowvert v_1-v_2\arrowvert,n)\left(F_n(v_3)F_n(v_4)-F_n(v_1)F_n(v_2)\right)\varphi^1(v_1)\\
=&\frac{(1+\lambda_n)}{2}\int_{\mathbb{R}^3} dv_1 \int_{\mathbb{R}^3}\;dv_2\int_{\mathbb{S}^2} d\omega\;\min(\arrowvert v_1-v_2\arrowvert,n)F_n(v)F_n(v_2)\left(\varphi^1(v_3)+\varphi^1(v_4)-\varphi^1(v_1)-\varphi^1(v_2)\right)
\end{split}
\end{equation*}
and also
\begin{equation*}
\begin{split}
\int_{\mathbb{R}^3}\;dv\; \mathbb{K}_{2,n}^{elastic}\left[F_n,\lambda_n\right]\varphi^2(v)=&\frac{(1+\lambda_n)\lambda_n}{2}\int_{\mathbb{R}^3} dv_1 \int_{\mathbb{R}^3}\;dv_2\int_{\mathbb{S}^2} d\omega\;\min(\arrowvert v_1-v_2\arrowvert,n)\\
&\:\:\:\:\:\:\:\:\:\:\:\:\:\:\:\:\:\:\:\:\:\:\:\:F_n(v_1)F_n(v_2)\left(\varphi^2(v_3)+\varphi^2(v_4)-\varphi^2(v_1)-\varphi^2(v_2)\right).
\end{split}
\end{equation*}
Using Proposition \ref{propcutoff}, we see that for the nonelastic collision the weak formulation similar to the one in \pef{weak} applies, indeed
\begin{equation}\label{nonelasticcutoff4}
\begin{split}
&\int_{\mathbb{R}^3}\;dv\;\left[ \mathbb{K}_{1,n}^{nonelastic}\left[F_n,\lambda_n\right]\varphi^1(v)+\mathbb{K}_{2,n}^{nonelastic}\left[F_n,\lambda_n\right]\varphi^2(v)\right]\\
=&2\int_{\mathbb{R}^3} d\ov_1 \int_{\mathbb{R}^3}\;d\ov_2\int_{\mathbb{S}^2} d\omega\;B_{ne,n}^{12}\left(\lambda_nF_n(\ov_3)F_n(\ov_4)-F_n(\ov_1)F_n(\ov_2)\right)\varphi^1(\ov_1)\\
&+\int_{\mathbb{R}^3} d\ov_3 \int_{\mathbb{R}^3}\;d\ov_4\int_{\mathbb{S}^2} d\omega\;B_{ne,n}^{34}\left(F_n(\ov_1)F_n(\ov_2)-\lambda_nF_n(\ov_3)F_n(\ov_4)\right)\varphi^1(\ov_4)\\
&+\int_{\mathbb{R}^3} d\ov_3 \int_{\mathbb{R}^3}\;d\ov_4\int_{\mathbb{S}^2} d\omega\;B_{ne,n}^{34}\left(F_n(\ov_1)F_n(\ov_2)-\lambda_nF_n(\ov_3)F_n(\ov_4)\right)\varphi^2(\ov_3)\\
=&\int_{\Omega_n}d\ov_1 d\ov_2 d\ov_3 d\ov_4 \delta(\ov_1+\ov_2-\ov_3-\ov_4)\delta(\arrowvert\ov_1\arrowvert^2+\arrowvert\ov_2\arrowvert^2-\arrowvert\ov_3\arrowvert^2-\arrowvert\ov_4\arrowvert^2-2\varepsilon_0)W_{ne}\\
&\;\;\;\;\left(F_n(\ov_1)F_n(\ov_2)-\lambda_nF_n(\ov_3)F_n(\ov_4)\right)\left(\varphi^2(\ov_3)+\varphi^1(\ov_4)-\varphi^1(\ov_1)-\varphi^1(\ov_2)\right).\\
\end{split}
\end{equation}
Hence, testing with $ \varphi^i(v)=1,\;v,\;\arrowvert v\arrowvert^2+\delta_{i2}2\varepsilon_0 $ we can immediately conclude that \linebreak$ \int_{\mathbb{R}^3}\;dv\;\left[ \mathbb{K}_{1,n}\left[F_n,\lambda_n\right]\varphi^1(v)+\mathbb{K}_{2,n}\left[F_n,\lambda_n\right]\varphi^2(v)\right]=0 $.
This result implies easily the conservation of mass and momentum, indeed for a solution $ (\lambda_n, F_n) $ we have
\begin{equation*}
\partial_t(1+\lambda_n)F_n= F_n \partial_t\lambda_n+(1+\lambda_n)\partial_tF_n=T_{1,n}(\lambda_n, F_n)F_n+T_{2,n}(\lambda_n, F_n)(1+\lambda_n)=\mathbb{K}_{1,n}\left[F_n,\lambda_n\right]+\mathbb{K}_{2,n}\left[F_n,\lambda_n\right].
\end{equation*}
To conclude the conservation of energy we first notice through a straightforward calculation from the definition of the cut-off kinetic system that $ \partial_t(\lambda_n)=(1-\lambda_n)^2\int_{\mathbb{R}^3}dv\;\left(\partial_tF_n-\mathbb{K}_{1,n}\left[F_n,\lambda_n\right]\right) $. This together with the multiplication rule $ \partial_t(\lambda_nF_n)=\mathbb{K}_{1,n}\left[F_n,\lambda_n\right]+\mathbb{K}_{2,n}\left[F_n,\lambda_n\right]-\partial_tF_n $ implies that 
\begin{equation*}
\partial_t\left(\int_{\mathbb{R}^3}dv\;\lambda_nF_n\right)+\partial_t\frac{\lambda_n}{1-\lambda_n}=-\int_{\mathbb{R}^3}dv\;\mathbb{K}_{1,n}\left[F_n,\lambda_n\right]=\int_{\mathbb{R}^3}dv\;\mathbb{K}_{2,n}\left[F_n,\lambda_n\right].
\end{equation*} Putting all these considerations together we obtain that
\begin{equation}
\begin{split}
\partial_t E_n(t)&=\partial_t\left(\int_{\mathbb{R}^3} dv \left[\left(1+\arrowvert v\arrowvert^2\right)(1+\lambda_n)F_n+2\varepsilon_0\lambda_n F_n\right]\right)+ 2\varepsilon_0\partial_t\frac{\lambda_n}{1-\lambda_n}\\&=\int_{\mathbb{R}^3}\;dv\;\left[ \mathbb{K}_{1,n}\left[F_n,\lambda_n\right]\left(1+\arrowvert v\arrowvert^2\right)+\mathbb{K}_{2,n}\left[F_n,\lambda_n\right]\left(1+\arrowvert v\arrowvert^2+2\varepsilon_0\right)\right]=0.
\end{split}
\end{equation}
All the integrals appearing in these calculations are well defined, since we are considering a solution $ \left(\lambda_n, F_n\right)\in C^1\left([0,\infty), X\right) $ and since the collisional kernels are bounded, we also have that $ \mathbb{K}_{i,n}\left[F_n,\lambda_n\right]\in L_2^1\left(\mathbb{R}^3\right) $.
\end{proof}
\end{lemma}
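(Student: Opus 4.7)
The plan has two steps. First, I would establish a weak formulation for the combined cut-off operator $\int_{\mathbb{R}^3}(\mathbb{K}_{1,n}[F_n,\lambda_n]\varphi^1+\mathbb{K}_{2,n}[F_n,\lambda_n]\varphi^2)\,dv$ analogous to \pef{weak}. For the elastic contribution this is standard: $B_{el,n}(v_1,v_2)=B_{el,n}(v_3,v_4)$ and the map $(v_1,v_2,\omega)\mapsto(v_3,v_4,\omega)$ is measure-preserving, so symmetrization yields the usual factor $(\varphi^i(v_3)+\varphi^i(v_4)-\varphi^i(v_1)-\varphi^i(v_2))$ multiplied by $F_n(v_1)F_n(v_2)$ (with a factor of $\lambda_n$ for $i=2$). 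For the nonelastic contribution, the decisive input is Proposition \ref{propcutoff}: the cut-offs in $B_{ne,n}^{12}$ and $B_{ne,n}^{34}$ match under the change of variables between pre- and post-collisional velocities, so the three separate nonelastic integrals appearing in $\mathbb{K}_{1,n}$ and $\mathbb{K}_{2,n}$ can be re-assembled into a single integral over the common symmetric domain $\Omega_n$, producing the factor $(\varphi^2(\ov_3)+\varphi^1(\ov_4)-\varphi^1(\ov_1)-\varphi^1(\ov_2))$ against $F_n(\ov_1)F_n(\ov_2)-\lambda_n F_n(\ov_3)F_n(\ov_4)$.

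Once this identity is in hand, testing it against $\varphi^1=\varphi^2=1$ yields $\int(\mathbb{K}_{1,n}+\mathbb{K}_{2,n})\,dv=0$; against $\varphi^1=\varphi^2=v_j$ it gives $\int v_j(\mathbb{K}_{1,n}+\mathbb{K}_{2,n})\,dv=0$ by momentum conservation in both collision types; and against $\varphi^1=\arrowvert v\arrowvert^2$, $\varphi^2=\arrowvert v\arrowvert^2+2\varepsilon_0$ it produces $\int[\arrowvert v\arrowvert^2\mathbb{K}_{1,n}+(\arrowvert v\arrowvert^2+2\varepsilon_0)\mathbb{K}_{2,n}]\,dv=0$, using $\arrowvert v_1\arrowvert^2+\arrowvert v_2\arrowvert^2=\arrowvert v_3\arrowvert^2+\arrowvert v_4\arrowvert^2$ in the elastic case and $\arrowvert\ov_1\arrowvert^2+\arrowvert\ov_2\arrowvert^2-\arrowvert\ov_3\arrowvert^2-\arrowvert\ov_4\arrowvert^2-2\varepsilon_0=0$ in the nonelastic case.

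The second step is purely algebraic. A direct expansion using Definition \ref{defL} (with $\mathbb{K}_i$ replaced by $\mathbb{K}_{i,n}$) shows $F_n T_{1,n}+(1+\lambda_n)T_{2,n}=\mathbb{K}_{1,n}[F_n,\lambda_n]+\mathbb{K}_{2,n}[F_n,\lambda_n]$, hence $\partial_t[(1+\lambda_n)F_n]=\mathbb{K}_{1,n}+\mathbb{K}_{2,n}$; integrating in $v$ and multiplying by $1$ or $v_j$, the weak identities from the first step immediately give \pef{newmass} and \pef{newmomentum}. For the energy I would combine $\partial_t(\lambda_n F_n)=\mathbb{K}_{1,n}+\mathbb{K}_{2,n}-\partial_t F_n$ with the identity $\partial_t\frac{\lambda_n}{1-\lambda_n}=\partial_t\int_{\mathbb{R}^3} F_n\,dv-\int_{\mathbb{R}^3}\mathbb{K}_{1,n}\,dv$, which follows from the explicit form of $T_{1,n}$ together with the integrated version of $T_{2,n}$ (using $\int(\mathbb{K}_{1,n}+\mathbb{K}_{2,n})\,dv=0$). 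This yields $\partial_t\int\lambda_n F_n\,dv+\partial_t\frac{\lambda_n}{1-\lambda_n}=\int\mathbb{K}_{2,n}\,dv$, and then $\partial_t E_n(t)=\int_{\mathbb{R}^3}[(1+\arrowvert v\arrowvert^2)\mathbb{K}_{1,n}+(1+\arrowvert v\arrowvert^2+2\varepsilon_0)\mathbb{K}_{2,n}]\,dv=0$ by the energy test above.

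The main obstacle will be the bookkeeping of the nonelastic weak formulation: three integrals with distinct kernels and distinct roles for pre- versus post-collisional velocities must be brought to a common symmetric form, and this relies crucially on Proposition \ref{propcutoff} telling us that $\Omega_n$ is invariant under the exchange. All integrability issues are trivial since the cut-off kernels are uniformly bounded and $F_n\in C^1([0,\infty),L^1_2(\mathbb{R}^3))$, so Fubini applies freely; no further moment bounds are needed at the cut-off level.
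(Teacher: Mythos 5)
Your proposal matches the paper's proof essentially step for step: establish the weak formulation for the combined cut-off operator (elastic part by the standard pre/post-collisional symmetry, nonelastic part by recombining the three integrals over $\Omega_n$ via Proposition~\ref{propcutoff}), test with $\varphi^i=1,\,v,\,\arrowvert v\arrowvert^2+\delta_{i2}2\varepsilon_0$, then use the algebraic identities $\partial_t[(1+\lambda_n)F_n]=\mathbb{K}_{1,n}+\mathbb{K}_{2,n}$ and $\partial_t\int\lambda_n F_n\,dv+\partial_t\tfrac{\lambda_n}{1-\lambda_n}=\int\mathbb{K}_{2,n}\,dv$ to conclude. The argument is correct and follows the same route as the paper.
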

With this Lemma and in particular with the conservation of energy, we see again that for a solution $ (\lambda_n, F_n)\in C^1\left([0,1), X\right) $, the first term satisfies $ \lambda_n\in\left[0,\frac{E_n(0)}{2\varepsilon_0+E_n(0)}\right]~\subset~[0,1) $. Next we look at what happens to the entropy for the cut-off system. We define therefore $ \mathcal{H}_n(t):=\mathcal{H}[\lambda_n, F_n](t) $ for a solution $ \left(\lambda_n, F_n\right) $ according to Definition \ref{energyentropy}. The following two lemmas summarize the properties of the entropy in this case.
\begin{lemma}\label{lemcutoff2}
Let $ (\lambda_n, F_n)\in C^1\left([0,1), X\right) $ be a solution of the equation 
$ \partial_t\begin{pmatrix}\lambda_n\\ F_n\end{pmatrix} =T_n \begin{pmatrix}\lambda_n\\ F_n\end{pmatrix}$ 
for the initial value $ (\lambda_0, F_0)\in X_+\cap\tilde{X} $ which satisfies \pef{Assumption}. Assume $ \mathcal{H}_n(t) $ is well-defined and assume $ \Arrowvert F_n\Arrowvert_{L_4^1}\leq \kappa $ for some $ 0<\kappa<\infty $ and for all $ t\geq0 $. Then
\begin{equation*}
\partial_t \mathcal{H}_n(t)=\int_{\mathbb{R}^3}\;dv\;\left[ \mathbb{K}_{1,n}\left[F_n,\lambda_n\right]\log(F_n)+\mathbb{K}_{2,n}\left[F_n,\lambda_n\right]\log(\lambda_nF_n)\right] \leq 0.
\end{equation*}
\begin{proof}
In Lemma \ref{lemcutoff1} we saw that also in the case of this cut-off system the following is true
\begin{equation*}
\partial_t(1+\lambda_n)F_n=\mathbb{K}_{1,n}\left[F_n,\lambda_n\right]+\mathbb{K}_{2,n}\left[F_n,\lambda_n\right] 
\end{equation*}
and 
\begin{equation*}
\partial_t(\frac{\lambda_n}{1-\lambda_n})=\int_{\mathbb{R}^3}dv\;\left(\partial_tF_n-\mathbb{K}_{1,n}\left[F_n,\lambda_n\right]\right)=-\int_{\mathbb{R}^3}dv\;\left(\partial_t\lambda_nF_n-\mathbb{K}_{2,n}\left[F_n,\lambda_n\right]\right).
\end{equation*}
We can therefore proceed as we did in Proposition \ref{prop1} and obtain that
\begin{equation}
\begin{split}
&\partial_t \mathcal{H}_n(t)=\int_{\mathbb{R}^3}\;dv\;\left[\mathbb{K}_{1,n}\left[F_n,\lambda_n\right]\log(F_n)+\mathbb{K}_{2,n}\left[F_n,\lambda_n\right]\log(\lambda_nF_n)\right]\\
=&\frac{(1+\lambda_n)^2}{4}\int_{\mathbb{R}^3} dv_1\int_{\mathbb{R}^3}dv_2\int_{\mathbb{S}^2} d\omega\;\min(\arrowvert v_1-v_2\arrowvert,n)\left(F_n(v_3)F_n(v_4)-F_n(v_1)F_n(v_2)\right)\log\left(\frac{F_n(v_1)F_n(v_2)}{F_n(v_3)F_n(v_4)}\right)\\
&+\int_{\Omega_n}d\ov_1 d\ov_2 d\ov_3 d\ov_4 \delta(\ov_1+\ov_2-\ov_3-\ov_4)\delta(\arrowvert\ov_1\arrowvert^2+\arrowvert\ov_2\arrowvert^2-\arrowvert\ov_3\arrowvert^2-\arrowvert\ov_4\arrowvert^2-2\varepsilon_0)W_{ne}\\
&\;\;\;\;\left(\lambda_nF_n(\ov_3)F_n(\ov_4)-F_n(\ov_1)F_n(\ov_2)\right)\log\left(\frac{F_n(\ov_1)F_n(\ov_2)}{\lambda_nF_n(\ov_3)F_n(\ov_4)}\right)\\
\leq&0.
\end{split}
\end{equation}
\end{proof}
\end{lemma}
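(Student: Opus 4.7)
The plan is to imitate the argument of Proposition \ref{prop1} essentially verbatim, with the bounded cut-off kernels making several integrability issues easier. Before differentiating, I would collect the two identities already established inside the proof of Lemma \ref{lemcutoff1}: namely
\begin{equation*}
\partial_t\bigl((1+\lambda_n)F_n\bigr)=\mathbb{K}_{1,n}[F_n,\lambda_n]+\mathbb{K}_{2,n}[F_n,\lambda_n],\qquad \partial_t\tfrac{\lambda_n}{1-\lambda_n}=\int_{\mathbb{R}^3}\mathbb{K}_{2,n}[F_n,\lambda_n]\,dv=-\int_{\mathbb{R}^3}\mathbb{K}_{1,n}[F_n,\lambda_n]\,dv,
\end{equation*}
together with the pointwise formula $\partial_t(\lambda_n F_n)=\mathbb{K}_{1,n}+\mathbb{K}_{2,n}-\partial_t F_n$. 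These are precisely the ingredients that powered the cancellations in \pef{3.11}.

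Next, I differentiate
\begin{equation*}
\mathcal{H}_n(t)=\int_{\mathbb{R}^3}\bigl[(1+\lambda_n)F_n\log F_n+\lambda_n F_n\log\lambda_n\bigr]dv+\tfrac{\lambda_n}{1-\lambda_n}\log\lambda_n+\log(1-\lambda_n)
\end{equation*}
term by term via the Leibniz rule and substitute the previous identities. The algebra is formally identical to that in Proposition \ref{prop1}: the ``$+1$'' contributions coming from differentiating the inner logarithms sum to a multiple of $\partial_t\int(1+\lambda_n)F_n$ and $\partial_t\tfrac{\lambda_n}{1-\lambda_n}$, which exactly cancel against the boundary terms produced by the ODE in $\lambda_n$. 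What remains is the claimed expression $\int dv\,[\mathbb{K}_{1,n}\log F_n+\mathbb{K}_{2,n}\log(\lambda_n F_n)]$.

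To rewrite this as a manifestly non-positive quantity I then use the weak-formulation computation carried out in \pef{nonelasticcutoff4} of Lemma \ref{lemcutoff1}: symmetrising the elastic part over $(v_1,v_2)\leftrightarrow(v_3,v_4)$ with the cut-off kernel $\min(|v_1-v_2|,n)$ (which is symmetric), and the nonelastic part over the set $\Omega_n$ (which, by Proposition \ref{propcutoff}, is invariant under exchange of pre- and post-collisional velocities), I convert the expression into
\begin{equation*}
\tfrac{(1+\lambda_n)^2}{4}\iiint \min(|v_1-v_2|,n)\bigl(F_n(v_3)F_n(v_4)-F_n(v_1)F_n(v_2)\bigr)\log\tfrac{F_n(v_1)F_n(v_2)}{F_n(v_3)F_n(v_4)}+\text{(nonelastic analogue)}.
\end{equation*}
The elementary inequality $(z-y)\log(y/z)\le 0$ for $y,z>0$ then gives $\partial_t\mathcal{H}_n(t)\le 0$.

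The main obstacle is the same as in Proposition \ref{prop1}: justifying the formal differentiation of $\mathcal{H}_n$ rigorously. I would copy the regularisation $\log(F_n+\varepsilon)$ procedure used in \pef{assumption6}--\pef{assumption9}. The $L^1_4$ bound on $F_n$ together with the boundedness of the cut-off kernels gives $\mathbb{K}_{i,n}[F_n,\lambda_n]\in L^1_3(\mathbb{R}^3)$, hence $T_{1,n}(\lambda_n,F_n)\in L^\infty$ and $T_{2,n}(\lambda_n,F_n)\in L^1$. The pointwise lower bound \pef{Assumption5} on $F_n$ (which, as in Section 3.1, propagates from the initial assumption \pef{Assumption} because the loss term in the cut-off equation still obeys an estimate of the form $\partial_t F_n\ge -C(1+|v|)F_n$) gives $|\log F_n|\le C(1+|v|^2)$, so all integrands appearing after the regularisation are uniformly integrable in $\varepsilon$. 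Letting $\varepsilon\to 0$ and using that $x\mapsto\tfrac{x}{1-x}\log x+\log(1-x)$ is $C^1$ on $[0,\lambda^\star]$ with $\lambda^\star:=E_n(0)/(2\varepsilon_0+E_n(0))<1$ yields the formula and hence the desired sign.
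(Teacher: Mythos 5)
Your proposal is correct and follows the same route as the paper: recall the two structural identities from Lemma \ref{lemcutoff1}, differentiate $\mathcal{H}_n$ term by term as in Proposition \ref{prop1}, symmetrize the elastic part over $(v_1,v_2)\leftrightarrow(v_3,v_4)$ and the nonelastic part over $\Omega_n$, and invoke $(z-y)\log(y/z)\le 0$. The regularization $\log(F_n+\varepsilon)$ and the propagated Gaussian lower bound that you spell out are exactly what the paper's terse ``proceed as we did in Proposition \ref{prop1}'' refers to, so your added detail is a faithful unpacking, not a departure.
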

\begin{lemma}\label{lemcutoff3}
Let $ (\lambda_n, F_n)\in C^1\left([0,1), X\right) $ be a solution of the equation $ \partial_t\begin{pmatrix}\lambda_n \\F_n\end{pmatrix}=T_n\begin{pmatrix}\lambda_n\\F_n\end{pmatrix} $ for the initial value $ (\lambda_0, F_0)\in X_+\cap\tilde{X} $ which satisfies \pef{Assumption}. Assume $ \Arrowvert F_n\Arrowvert_{L_4^1}\leq \kappa $ for some $ 0<\kappa<\infty $ and for all $ t\geq0 $, $ F_0\log(F_0)\in L^1\left(\mathbb{R}^3\right) $ and $ E_n(0)=:E<\infty $. Then there exists a constant $ K $ depending on $ E $ such that $ \mathcal{H}_n(t)\geq K $ for all $ t $.
\begin{proof}
As we already have seen the assumptions imply $ \lambda_n\in\left[0,\frac{E}{2\varepsilon_0+E}\right]\subset[0,1) $. We now estimate each term of the entropy. So we start with $ (1+\lambda_n)\leq 2 $, also using the well-known inequality $ x \log(x)\geq -y+x\log(y) $ with $ x=F $ and $ y=e^{-\arrowvert v\arrowvert^2} $ we estimate
\begin{equation}
\int_{\mathbb{R}^3}dv\; F_n \log(F_n)\geq \int_{\mathbb{R}^3}dv\; F_n \log^-(F_n)\geq -\int_{\mathbb{R}^3}dv\;\left[ F_n\arrowvert v\arrowvert^2+e^{-\arrowvert v\arrowvert^2}\right]\geq -E-\int_{\mathbb{R}^3}dv\; e^{-\arrowvert v\arrowvert^2}:=-C_E
\end{equation}
It is well-known that $ \lambda_n\log(\lambda_n)\geq -\frac{1}{e} $ and also that $ \log(1-\lambda_n)\geq\log\left(1-\frac{E}{2\varepsilon_0+E}\right)=\log\left(\frac{2\varepsilon_0}{2\varepsilon_0+E}\right) $. Finally we claim: $ \frac{\lambda_n}{1-\lambda_n}\log(\lambda_n)\geq 1 $. In order to see that let us consider $ f(x)=\frac{x}{1-x}\log(x) $ for $ 0\leq x<1 $. This function is non-increasing, indeed by the well-known inequality $ \log(x)\leq x-1 $ we compute
\begin{equation*}
f'(x)=\frac{1}{(1-x)^2}\log(x)+\frac{1}{1-x}=\frac{1}{1-x}\left(\frac{1}{1-x}\log(x)+1\right)\leq 0.
\end{equation*}
Therefore since $ \lim\limits_{x\to 1}f(x)=-1 $ we conclude the claim.
All these estimates imply now the lower bound for the entropy
\begin{equation}
\begin{split}
\mathcal{H}_n(t)=&\int_{\mathbb{R}^3} \; dv \left[(1+\lambda_n)F_n\log(F_n)+F_n\lambda_n\log(\lambda_n)\right]+\frac{\lambda_n
}{1-\lambda_n}\log(\lambda_n)+\log(1-\lambda_n)\\
\geq&-2C_E-\frac{E}{e}-1-\log\left(\frac{2\varepsilon_0}{2\varepsilon_0+E}\right):=K.
\end{split}
\end{equation}
\end{proof} 
\end{lemma}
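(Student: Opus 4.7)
The plan is to bound each of the four summands in $\mathcal{H}_n(t)$ from below separately and combine them, all constants being expressible in terms of the conserved energy $E$. The first ingredient is the bound $\lambda_n(t)\in[0,E/(2\varepsilon_0+E)]$ which follows immediately from the conservation of energy established in Lemma~\ref{lemcutoff1} (the inequality $2\varepsilon_0\,\lambda_n/(1-\lambda_n)\le E_n(t)=E$). This confines $\lambda_n$ strictly away from $1$ and will control the last three terms of the entropy.

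For the leading term $\int(1+\lambda_n)F_n\log F_n\,dv$ I would use the standard Boltzmann trick based on the nonnegativity of relative entropy: for all $x,y\ge 0$ one has $x\log x\ge x\log y +(x-y)$. Choosing the Gaussian weight $y=e^{-|v|^2}$ and integrating gives
\begin{equation*}
\int_{\mathbb{R}^3} F_n\log F_n\,dv \;\ge\; -\int_{\mathbb{R}^3} F_n|v|^2\,dv - \int_{\mathbb{R}^3} e^{-|v|^2}\,dv \;\ge\; -E-C,
\end{equation*}
where I used the energy bound $\int F_n|v|^2\,dv\le E$. Since $1+\lambda_n\in[1,2)$, this produces a lower bound depending only on $E$ (taking twice the constant to cover the case where the integrand is negative).

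The term $\int F_n\lambda_n\log\lambda_n\,dv$ is easy: $x\log x\ge -1/e$ pointwise, and $\int F_n\,dv\le E$ from the energy control, so this term is bounded below by $-E/e$. The term $\log(1-\lambda_n)$ is bounded below by $\log\bigl(2\varepsilon_0/(2\varepsilon_0+E)\bigr)$ using the explicit upper bound on $\lambda_n$.

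The one piece requiring a little care is $\frac{\lambda_n}{1-\lambda_n}\log\lambda_n$. I would study the auxiliary function $f(x)=\frac{x}{1-x}\log x$ on $[0,1)$: direct computation gives $f(0^+)=0$ and $\lim_{x\to 1^-}f(x)=-1$ (since $\log x\sim x-1$ near $1$). The key monotonicity claim is $f'\le 0$ on $(0,1)$, which follows from
\begin{equation*}
f'(x)=\frac{1}{1-x}\Bigl(\frac{\log x}{1-x}+1\Bigr)\le\frac{1}{1-x}\Bigl(\frac{x-1}{1-x}+1\Bigr)=0,
\end{equation*}
using $\log x\le x-1$. Hence $f(\lambda_n)\ge -1$ uniformly in $t$. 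Combining the four lower bounds yields a finite constant $K=K(E,\varepsilon_0)$ with $\mathcal{H}_n(t)\ge K$. The whole argument is a collection of elementary one-variable estimates; the only step with any subtlety is the monotonicity of $f$, while the rest is standard Boltzmann-entropy bookkeeping.
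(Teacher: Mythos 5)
Your proof is correct and follows essentially the same route as the paper: the energy bound confines $\lambda_n$ to $[0,E/(2\varepsilon_0+E)]$, the Gaussian-weight inequality $x\log x\ge x\log y+(x-y)$ controls $\int F_n\log F_n$, the pointwise bound $x\log x\ge -1/e$ handles the $F_n\lambda_n\log\lambda_n$ term, the explicit upper bound on $\lambda_n$ handles $\log(1-\lambda_n)$, and the monotonicity of $f(x)=\frac{x}{1-x}\log x$ gives $f(\lambda_n)\ge -1$. One small remark: the paper's displayed claim reads $\frac{\lambda_n}{1-\lambda_n}\log\lambda_n\ge 1$, which is a typo for $\ge -1$ (consistent both with the limit computation $\lim_{x\to 1^-}f(x)=-1$ there and with the final constant $K$), and you correctly derive the intended $-1$.
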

\begin{remark}
Lemma \ref{lemcutoff2} and Lemma \ref{lemcutoff3} imply together an important result for any strong solution \linebreak$ (\lambda_n, F_n)\in C^1\left([0,1), X\right) $ of $ \partial_t\begin{pmatrix}\lambda_n \\F_n\end{pmatrix}=T_n\begin{pmatrix}\lambda_n \\F_n\end{pmatrix} $ for the initial value $ (\lambda_0, F_0)\in X_+\cap\tilde{X} $ which satisfies \pef{Assumption}. If $ \Arrowvert F_n\Arrowvert_{L_4^1}\leq \kappa $ for some $ 0<\kappa<\infty $ and for all $ t\geq0 $, $ F_0\log(F_0)\in L^1\left(\mathbb{R}^3\right) $ and $ E_n(0)=:E<\infty $, then there exists a $ K\in\mathbb{R} $ such that $ K\leq\mathcal{H}_n(t)\leq\mathcal{H}(0)  $. This implies, as we will see in the next Lemma, that $ F_n\log(F_n) $ has uniformly bounded integral for any $ n $ and $ t $.
\end{remark}
\begin{lemma}\label{lemcutoff4}
Let $ (\lambda_n, F_n) $ and $ (\lambda_0, F_0) $ satisfy the same assumption as in Lemma \ref{lemcutoff3}. Then the sequence $ (\lambda_n, F_n) $ is bounded in $ C\left([0,\infty), X\right) $ and there exists some $ C\in\mathbb{R} $ independent of $ n $ and $ t $ such that $ \int_{\mathbb{R}^3}dv\;F_n\log(F_n)\leq C  $ for all $ n $ and $ t $. Moreover $ F_n\log(F_n)\in L^1\left(\mathbb{R}^3\right) $ for all $ n $ and $ t $.
\begin{proof}
The conservation of energy implies the first claim
\begin{equation*}
\sup\limits_{t\geq 0}\arrowvert\lambda_n(t)\arrowvert\leq \frac{E}{2\varepsilon_0+E}\;\;\text{ and }\;\;\sup\limits_{t\geq 0}\Arrowvert F_n(t)\Arrowvert_{L_2^1}\leq E.
\end{equation*}
The second claim follows from Lemma \ref{lemcutoff3}. Indeed
\begin{equation*}
\begin{split}
\int_{\mathbb{R}^3}dv\;F_n\log(F_n)\leq&\int_{\mathbb{R}^3}dv\;(1+\lambda_n)F_n\log(F_n)\\
=&\mathcal{H}_n(t)-\left[\int_{\mathbb{R}^3} \; dv F_n\lambda_n\log(\lambda_n)+\frac{\lambda_n
}{1-\lambda_n}\log(\lambda_n)+\log(1-\lambda_n)\right]\\
\leq& \mathcal{H}(0)+\frac{E}{e}+1+\log\left(\frac{1}{2\varepsilon_0+E}\right):=C.
\end{split}
\end{equation*}
Moreover, we already know that $ \int_{\mathbb{R}^3}dv\; F_n \log^-(F_n)\geq-C_E $, this together with the last estimate implies
\begin{equation*}
\int_{\mathbb{R}^3}dv\; F_n \log^+(F_n)=\int_{\mathbb{R}^3}dv\; F_n \log(F_n)-\int_{\mathbb{R}^3}dv\; F_n \log^-(F_n)\leq C+C_E.
\end{equation*} 
Since all these estimates are uniformly in time and $ n $, we conclude $ F_n\log(F_n)\in L^1\left(\mathbb{R}^3\right) $.
\end{proof}
\end{lemma}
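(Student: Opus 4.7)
The first assertion is an immediate consequence of the conservation laws established in Lemma \ref{lemcutoff1}. The identity $E_n(t)=E_n(0)=E$ forces $\tfrac{\lambda_n}{1-\lambda_n}\leq \tfrac{E}{2\varepsilon_0}$ and hence $\lambda_n(t)\leq \tfrac{E}{2\varepsilon_0+E}<1$, while the same identity yields $\|F_n(t)\|_{L^1_2}\leq E$. Together these give the uniform bound of $(\lambda_n,F_n)$ in $C\left([0,\infty),X\right)$.

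For the uniform bound on $\int F_n\log F_n$, the strategy is to combine the entropy monotonicity of Lemma \ref{lemcutoff2} with the lower bounds on the subleading summands of $\mathcal{H}_n$ derived in Lemma \ref{lemcutoff3}. Since $\lambda_n$ depends only on $t$, the factor $1+\lambda_n$ can be pulled out of the velocity integral, and Definition \ref{energyentropy} rearranges to
\[
(1+\lambda_n(t))\int_{\mathbb{R}^3} F_n\log F_n\,dv
=\mathcal{H}_n(t)-\lambda_n\log(\lambda_n)\int_{\mathbb{R}^3} F_n\,dv-\frac{\lambda_n}{1-\lambda_n}\log(\lambda_n)-\log(1-\lambda_n).
\]
Using $\mathcal{H}_n(t)\leq \mathcal{H}(0)<\infty$ (finite thanks to $F_0\log F_0\in L^1$ and $\lambda_0<1$), together with the lower bounds $\lambda_n\log\lambda_n\geq -1/e$, $\tfrac{\lambda_n}{1-\lambda_n}\log\lambda_n\geq -1$, and $\log(1-\lambda_n)\geq \log\!\left(\tfrac{2\varepsilon_0}{2\varepsilon_0+E}\right)$ (all established in the proof of Lemma \ref{lemcutoff3}) and the mass bound $\|F_n\|_{L^1}\leq E$, the right-hand side is majorized by a constant $M$ depending only on $\mathcal{H}(0)$ and $E$. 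To pass from the estimate on $(1+\lambda_n)\int F_n\log F_n$ to an estimate on $\int F_n\log F_n$ itself, I invoke the pointwise inequality $x\log x\geq -y+x\log y$ with $y=e^{-|v|^2}$, which integrates to $\int F_n\log F_n\geq -C_E$ with $C_E:=E+\int e^{-|v|^2}dv$; writing $(1+\lambda_n)\int F_n\log F_n=\int F_n\log F_n+\lambda_n\int F_n\log F_n$ and using $\lambda_n\in[0,1)$ then gives $\int F_n\log F_n\leq M+C_E=:C$, uniformly in $n$ and $t$.

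For the integrability claim, the pointwise inequality $F_n\log F_n\geq -e^{-|v|^2}-F_n|v|^2$ derived above shows that the negative part satisfies $F_n\log^- F_n\leq e^{-|v|^2}+F_n|v|^2$ pointwise, so $\int F_n\log^- F_n\leq C_E$ uniformly. Combining this with the upper bound $\int F_n\log F_n\leq C$ of the previous step yields $\int F_n\log^+ F_n\leq C+C_E$, and hence $\int F_n|\log F_n|\leq C+2C_E$, which proves $F_n\log F_n\in L^1(\mathbb{R}^3)$ with a uniform bound. The main obstacle in this argument is the careful sign-bookkeeping in the entropy decomposition: each of the three subleading terms of $\mathcal{H}_n$ has to be estimated in the direction that ultimately delivers an \emph{upper} bound on $(1+\lambda_n)\int F_n\log F_n$, and the transition from $(1+\lambda_n)\int F_n\log F_n$ to $\int F_n\log F_n$ must exploit the a priori lower bound $\int F_n\log F_n\geq -C_E$. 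All other ingredients are direct applications of the conservation laws and entropy estimates prepared in Lemmas \ref{lemcutoff1}--\ref{lemcutoff3}.
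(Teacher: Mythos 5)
Your proof is correct and follows the same route as the paper's: conservation of energy for the first claim, then entropy monotonicity (Lemma \ref{lemcutoff2}) combined with the lower bounds of Lemma \ref{lemcutoff3} for the second. Your passage from $(1+\lambda_n)\int_{\mathbb{R}^3}F_n\log F_n\leq M$ to $\int_{\mathbb{R}^3}F_n\log F_n\leq M+C_E$ via the a priori lower bound $\int_{\mathbb{R}^3}F_n\log F_n\geq -C_E$ is in fact slightly more careful than the paper's, whose opening step $\int_{\mathbb{R}^3}F_n\log F_n\leq\int_{\mathbb{R}^3}(1+\lambda_n)F_n\log F_n$ silently requires $\int_{\mathbb{R}^3}F_n\log F_n\geq 0$.
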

The next and last Lemma of this section is very important for the existence of strong solution of the cut-off kinetic system.
\begin{lemma}\label{lemcutoff5}
Let  $ \begin{pmatrix}\lambda\\F\end{pmatrix}, \begin{pmatrix}\mu\\G\end{pmatrix}\in X_+ $ with bounded energies $ E(\lambda,F)\leq E_0 $ and $ E(\mu,G)\leq E_0 $, then there exists some $ C_n $ (which can depend on $ n $) such that
\begin{equation*}
\left\Arrowvert T_n\begin{pmatrix}\lambda\\ F\end{pmatrix}-T_n\begin{pmatrix}\mu \\G\end{pmatrix}\right\Arrowvert_X\leq C_n\left\Arrowvert\begin{pmatrix}\lambda \\F\end{pmatrix}-\begin{pmatrix}\mu \\G\end{pmatrix}\right\Arrowvert_X.
\end{equation*}
\begin{proof}
The proof is not difficult, but it requires several estimates. We start with some preliminary calculations. They will be useful also at the end of this paper. For most of the next estimates we use that for given functions $ F,\,G,\,H\,K $ and velocities $ v,\,w,\,u,\,\nu $ the following holds true
\begin{equation}\label{equality}
\begin{split}
\left(H(\nu)F(u)\right.&\left.-F(v)F(w)-K(\nu)G(u)+G(v)G(w)\right)\\&=\left(H(\nu)-K(\nu)\right)F(u)+K(\nu)\left(F(u)-G(u)\right)-\left(F(v)-G(v)\right)F(w)-G(v)\left(F(w)-G(w)\right).
\end{split}
\end{equation}
We can proceed with all preliminary estimates. First if all, using \pef{equality} together with the triangle inequality and the well-known inequality $\min(\arrowvert v_1-v_2\arrowvert,n) \leq \arrowvert v_1-v_2\arrowvert\leq(1+\arrowvert v_1\arrowvert^2)^{1/2}(1+\arrowvert v_2\arrowvert^2)^{1/2} $ we compute with the help of the monotonicity of the $ L^1_k $-norms in the exponent k the following
\begin{equation}\label{lip1}
\begin{split}
&\int_{\mathbb{R}^3} dv\;\left\arrowvert \mathbb{K}_{1,n}^{elastic}\left[F,\lambda F\right]-\mathbb{K}_{1,n}^{elastic}\left[G,\lambda G\right]\right\arrowvert\\
\leq&4 \int_{\mathbb{R}^3} dv_1\int_{\mathbb{R}^3}dv_2 \left(1+\arrowvert v_1\arrowvert^2\right)\left(1+\arrowvert v_2\arrowvert^2\right)\left[\left\arrowvert F(v_1)-G(v_1)\right\arrowvert F_2+\left\arrowvert F(v_2)-G(v_2)\right\arrowvert G_1\right]\\
\leq& 8E_0 \Arrowvert F-G\Arrowvert_{L_2^1}.
\end{split}
\end{equation}
We obtain a similar result (the constant multiplying $ \Arrowvert F-G\Arrowvert_{L_2^1} $ will be slightly different) for the first nonelastic collision terms, since here the only properties we need is again that $ B_{ne,n}^{ij}\leq B_{ne}^{ij} $ and the estimate in \pef{usual estimate 2} as follows
\begin{equation}\label{lip}
\begin{split}
&\int_{\mathbb{R}^3} dv\;\left\arrowvert \mathbb{K}_{1,n}^{nonelastic}\left[F,\lambda F\right]-\mathbb{K}_{1,n}^{nonelastic}\left[G,\lambda G\right]\right\arrowvert\\
\leq&6 \frac{C_0}{2}(1+2\sqrt{\varepsilon_0}) \int_{\mathbb{R}^3} d\ov_1\int_{\mathbb{R}^3}d\ov_2 \left(1+\arrowvert \ov_1\arrowvert^2\right)\left(1+\arrowvert \ov_2\arrowvert^2\right)\left[\left\arrowvert F(\ov_1)-G(\ov_1)\right\arrowvert F(\ov_2)+\left\arrowvert F(\ov_2)-G(\ov_2)\right\arrowvert G(\ov_1)\right]\\
\leq& 6 C_0(1+2\sqrt{\varepsilon_0})E_0 \Arrowvert F-G\Arrowvert_{L_2^1}.
\end{split}
\end{equation}
This means that there exists a constant $ \overline{C} $ depending on $ E_0 $ and on $ \varepsilon_0 $ such that
\begin{equation}\label{lip2}
\int_{\mathbb{R}^3} dv\;\left\arrowvert \mathbb{K}_{1,n}\left[F,\lambda F\right]-\mathbb{K}_{1,n}\left[G,\lambda G\right]\right\arrowvert\leq \overline{C}\Arrowvert F-G\Arrowvert_{L_2^1}.
\end{equation}
This immediately implies another useful inequality
\begin{equation}\label{lip3}
\int_{\mathbb{R}^3} dv\;\left\arrowvert \mathbb{K}_{1,n}\left[F,\lambda F\right]\right\arrowvert\leq \overline{C} E_0 .
\end{equation}
Technically more involved is the estimate for
\begin{equation*}
\int_{\mathbb{R}^3} dv\;\left(1+\arrowvert v\arrowvert^2\right)\left[\left\arrowvert \mathbb{K}_{1,n}\left[F,\lambda F\right]-\mathbb{K}_{1,n}\left[G,\lambda G\right]\right\arrowvert+\left\arrowvert\mathbb{K}_{2,n}\left[F,\lambda F\right]-\mathbb{K}_{2,n}\left[G,\lambda G\right]\right\arrowvert\right]
\end{equation*}

We proceed similarly as before using \pef{equality} and estimating the cut-off kernel by $ n $. In the change of variable we use this time the relation between pre- and post-collisional velocities as in \pef{elasticrel1} and we calculate
\begin{equation}\label{lip4}
\begin{split}
&\int_{\mathbb{R}^3} dv\;\left(1+\arrowvert v\arrowvert^2\right)\left[\left\arrowvert \mathbb{K}_{1,n}^{elastic}\left[F,\lambda F\right]-\mathbb{K}_{1,n}^{elastic}\left[G,\lambda G\right]\right\arrowvert+\left\arrowvert\mathbb{K}_{2,n}^{elastic}\left[F,\lambda F\right]-\mathbb{K}_{2,n}^{elastic}\left[G,\lambda G\right]\right\arrowvert\right]\\
\leq& 4n \int_{\mathbb{R}^3} dv_1\int_{\mathbb{R}^3}dv_2\left(1+\arrowvert v_1\arrowvert^2\right)\left[\left\arrowvert F(v_1)-G(v_1)\right\arrowvert F_2+\left\arrowvert F(v_2)-G(v_2)\right\arrowvert G_1\right]\\
&+4n\int_{\mathbb{R}^3} dv_1\int_{\mathbb{R}^3}dv_2 \left(1+\arrowvert v_3\arrowvert^2+1+\arrowvert v_4\arrowvert^2\right) \left[\left\arrowvert F(v_3)-G(v_3)\right\arrowvert F_4+\left\arrowvert F(v_4)-G(v_4)\right\arrowvert G_3\right]\\
\leq& 8n E_0 \Arrowvert F-G\Arrowvert_{L_2^1} + 16n E_0 \Arrowvert F-G\Arrowvert_{L_2^1}\leq 24n E_0\Arrowvert F-G\Arrowvert_{L_2^1}.
\end{split}
\end{equation}
For the nonelastic collisions we proceed in a similar way, using the inequality $ B_{ne,n}^{ij}\leq n $, and obtain a similar result as the estimate in \pef{lip4}
\begin{equation}\label{lip4bis}
\begin{split}
\int_{\mathbb{R}^3}& dv\;\left(1+\arrowvert v\arrowvert^2\right)\left[\left\arrowvert \mathbb{K}_{1,n}^{nonelastic}\left[F,\lambda F\right]-\mathbb{K}_{1,n}^{nonelastic}\left[G,\lambda G\right]\right\arrowvert+\left\arrowvert\mathbb{K}_{2,n}^{nonelastic}\left[F,\lambda F\right]-\mathbb{K}_{2,n}^{nonelastic}\left[G,\lambda G\right]\right\arrowvert\right]\\
\leq&2nC_0(1+2\sqrt{\varepsilon_0})\left[\int_{\mathbb{R}^3} d\ov_1\int_{\mathbb{R}^3} d\ov_2 \;\left(1+\arrowvert \ov_1\arrowvert^2+1+\arrowvert\ov_2\arrowvert^2\right)\left(\left|F(\ov_1)-G(\ov_1)\right|F(\ov_2)+\left|F(\ov_2)-G(\ov_2)\right|G(\ov_1)\right)\right. \\\;\; &\left.+(1+\varepsilon_0)\int_{\mathbb{R}^3}d\ov_3\int_{\mathbb{R}^3} d\ov_4 \;\left(1+\arrowvert \ov_3\arrowvert^2+1+\arrowvert \ov_4\arrowvert^2\right)\left(\left| F(\ov_3)-G(\ov_3)\right|F(\ov_4)+\left|F(\ov_4)-G(\ov_4)\right|G(\ov_3)\right)\right]\\
\leq& 16nC_0(1+2\sqrt{\varepsilon_0})(1+\varepsilon_0)E_0\Arrowvert F-G\Arrowvert_{L_2^1}.
\end{split}
\end{equation}
In all these calculations we estimated $ \lambda $ by $ 1 $. All these results together imply the existence of a constant $ \overline{C}_n>0 $ which depends on $ n $ and for which
\begin{equation}\label{lip5}
\int_{\mathbb{R}^3} dv\;\left(1+\arrowvert v\arrowvert^2\right)\left[\left\arrowvert \mathbb{K}_{1,n}\left[F,\lambda F\right]-\mathbb{K}_{1,n}\left[G,\lambda G\right]\right\arrowvert+\left\arrowvert\mathbb{K}_{2,n}\left[F,\lambda F\right]-\mathbb{K}_{2,n}\left[G,\lambda G\right]\right\arrowvert\right]\leq \overline{C}_n\Arrowvert F-G\Arrowvert_{L_2^1}
\end{equation}
holds.\\

Other additional estimates involve the difference of the operator $ T_n $ applied this time to the pair $ (G,\lambda G) $ and $ (G,\mu G) $. With a similar computation as for equation \pef{lip2} (now many terms of the form $ \left\arrowvert G(v)-G(v)\right\arrowvert $ cancel out) we conclude the existence of a constant $ \tilde{C}>0 $ independent of $ n $ but which depends on $ E_0 $ such that
\begin{equation}\label{lip8}
\int_{\mathbb{R}^3} dv\;\left\arrowvert \mathbb{K}_{1,n}\left[G,\lambda G\right]-\mathbb{K}_{1,n}\left[G,\mu G\right]\right\arrowvert\leq \tilde{C}E_0\arrowvert\lambda-\mu\arrowvert.
\end{equation}
This can be also explicitly computed by mean of the definition of collision operator, the relation \pef{equality}, the triangle inequality and finally we estimate the kernels as for equations \pef{lip1} and \pef{lip2} by
\begin{equation}
\begin{split}
&\int_{\mathbb{R}^3} dv\;\left\arrowvert \mathbb{K}_{1,n}^{elastic}\left[G,\mu G\right]-\mathbb{K}_{1,n}^{elastic}\left[G,\mu G\right]\right\arrowvert\\
\leq&\arrowvert \lambda-\mu\arrowvert\int_{\mathbb{R}^3} dv_1\int_{\mathbb{R}^3}dv_2\int_{\mathbb{S}^2} d\omega\;\min(\arrowvert v_1-v_2\arrowvert,n)\left\arrowvert G(v_3)G(v_4)-G(v_1)G(v_2)\right\arrowvert\\
\leq&2\arrowvert \lambda-\mu\arrowvert E_0^2
\end{split}
\end{equation}
and by
\begin{equation}
\begin{split}
&\int_{\mathbb{R}^3} dv\;\left\arrowvert \mathbb{K}_{1,n}^{nonelastic}\left[F,\lambda F\right]-\mathbb{K}_{1,n}^{nonelastic}\left[G,\mu G\right]\right\arrowvert\\
\leq&2\arrowvert \lambda-\mu\arrowvert\int_{\mathbb{R}^3} d\ov_1\int_{\mathbb{R}^3}d\ov_2\int_{\mathbb{S}^2} d\omega\;B_{ne,n}^{12}(\ov_1,\ov_2)\left\arrowvert G(\ov_3)G(\ov_4)\right\arrowvert\\
&+\arrowvert \lambda-\mu\arrowvert\int_{\mathbb{R}^3} d\ov_3\int_{\mathbb{R}^3}d\ov_4\int_{\mathbb{S}^2} d\omega\;B_{ne,n}^{34}(\ov_3,\ov_4) \left\arrowvert G(\ov_3)G(\ov_4)\right\arrowvert \\
\leq&3 \frac{C_0}{2}(1+2\sqrt{\varepsilon_0})E_0^2 \arrowvert \lambda-\mu\arrowvert.
\end{split}
\end{equation}
In an analogous way as we did for equation \pef{lip5} we can find a constant $ \tilde{C}_n $ dependent on $ n $ and on $ E_0 $ such that 
\begin{equation}\label{lip10}
\int_{\mathbb{R}^3} dv\;\left(1+\arrowvert v\arrowvert^2\right)\left\arrowvert \mathbb{K}_{1,n}\left[G,\lambda G\right]-\mathbb{K}_{1,n}\left[G,\mu G\right]\right\arrowvert+\left\arrowvert\mathbb{K}_{2,n}\left[G,\lambda G\right]-\mathbb{K}_{2,n}\left[G,\mu G\right]\right\arrowvert\leq \tilde{C}_nE_0\arrowvert\lambda-\mu\arrowvert.
\end{equation}
Again this can be seen easily doing similar computation as for \pef{lip4}
\begin{equation}\label{key1}
\begin{split}
&\int_{\mathbb{R}^3} dv\;\left(1+\arrowvert v\arrowvert^2\right)\left[\left\arrowvert \mathbb{K}_{1,n}^{elastic}\left[G,\lambda G\right]-\mathbb{K}_{1,n}^{elastic}\left[G,\mu G\right]\right\arrowvert+\left\arrowvert\mathbb{K}_{2,n}^{elastic}\left[G,\lambda G\right]-\mathbb{K}_{2,n}^{elastic}\left[G,\mu G\right]\right\arrowvert\right]\\
\leq & n\left(2\arrowvert\lambda-\mu\arrowvert+\arrowvert \lambda^2-\mu^2\arrowvert\right) \int_{\mathbb{R}^3} dv_1\int_{\mathbb{R}^3}dv_2\left(1+\arrowvert v_1\arrowvert^2\right)G(v_1)G(v_2)\\
&+n\left(2\arrowvert\lambda-\mu\arrowvert+\arrowvert \lambda^2-\mu^2\arrowvert\right)\int_{\mathbb{R}^3} dv_3\int_{\mathbb{R}^3}dv_4 \left(1+\arrowvert v_3\arrowvert^2+1+\arrowvert v_4\arrowvert^2\right) G(v_3)G(v_4)\\
\leq& 12n E_0^2 \arrowvert\lambda-\mu\arrowvert.
\end{split}
\end{equation}
For the nonelastic part all the terms depending on $ \ov_1 $ and $ \ov_2 $ eliminate, due to the fact that they are not multiplied by $ \lambda $ or $ \mu $ and so we can conclude the following
\begin{equation}\label{key2}
\begin{split}
&\int_{\mathbb{R}^3} dv\;\left(1+\arrowvert v\arrowvert^2\right)\left[\left\arrowvert \mathbb{K}_{1,n}^{nonelastic}\left[G,\lambda G\right]-\mathbb{K}_{1,n}^{nonelastic}\left[G,\mu G\right]\right\arrowvert+\left\arrowvert\mathbb{K}_{2,n}^{nonelastic}\left[G,\lambda G\right]-\mathbb{K}_{2,n}^{nonelastic}\left[G,\mu G\right]\right\arrowvert\right]\\
&\leq 2n\arrowvert \lambda-\mu\arrowvert(2+\varepsilon_0)\int_{\mathbb{R}^3}d\ov_3\;\int_{\mathbb{R}^3} d\ov_4 \int_{\mathbb{S}^2} d\omega\; \left(1+\arrowvert \ov_3\arrowvert^2+1+\arrowvert \ov_4^2\arrowvert\right)\left(\lambda -\mu\right)G(\ov_3)G(\ov_4)\\
&\leq 4n E_0^2(2+\varepsilon_0)\arrowvert \lambda-\mu\arrowvert.
\end{split}
\end{equation}

Equipped with estimates \pef{lip2}, \pef{lip5}, \pef{lip8} and \pef{lip10}, the proof of Lemma \ref{lemcutoff5} can be concluded. We can bound the norm of the difference of the maps $ T_n $ by the sum of four terms
\begin{equation}\label{lip6}
\begin{split}
\left\Arrowvert T_n\begin{pmatrix}\lambda \\F\end{pmatrix}-T_n\begin{pmatrix}\mu \\G\end{pmatrix}\right\Arrowvert_X\leq&\left\arrowvert T_{1,n}\begin{pmatrix}\lambda \\F\end{pmatrix}-T_{1,n}\begin{pmatrix}\lambda \\G\end{pmatrix}\right\arrowvert+\left\arrowvert T_{1,n}\begin{pmatrix}\lambda \\G\end{pmatrix}-T_{1,n}\begin{pmatrix}\mu \\G\end{pmatrix}\right\arrowvert\\
&+\left\Arrowvert T_{2,n}\begin{pmatrix}\lambda \\F\end{pmatrix}-T_{2,n}\begin{pmatrix}\lambda \\G\end{pmatrix}\right\Arrowvert_{L_2^1}+\left\Arrowvert T_{2,n}\begin{pmatrix}\lambda \\G\end{pmatrix}-T_{2,n}\begin{pmatrix}\mu\\G\end{pmatrix}\right\Arrowvert_{L_2^1}\\
=&I+II+III+IV.
\end{split}
\end{equation}
For the first term we need \pef{lip2} and \pef{lip3}. Then we see that there exists some $ C_1>0 $ depending on $ E_0 $ but not on $ n $ such that
\begin{equation}\label{lip7}
\begin{split}
I\leq&\frac{(1-\lambda)^2(1+\lambda)}{(1+\lambda)+(1-\lambda)^2\int_{\mathbb{R}^3}F} \int_{\mathbb{R}^3} dv\;\left\arrowvert \mathbb{K}_{1,n}\left[F,\lambda F\right]-\mathbb{K}_{1,n}\left[G,\lambda G\right]\right\arrowvert\\
&+(1-\lambda)^2(1+\lambda) \int_{\mathbb{R}^3} dv\;\left\arrowvert\mathbb{K}_{1,n}\left[G,\lambda G\right]\right\arrowvert\left\arrowvert\frac{(1+\lambda)+(1-\lambda)^2\int_{\mathbb{R}^3}G-(1+\lambda)+(1-\lambda)^2\int_{\mathbb{R}^3}F}{\left((1+\lambda)+(1-\lambda)^2\int_{\mathbb{R}^3}F\right)\left((1+\lambda)+(1-\lambda)^2\int_{\mathbb{R}^3}G\right)}\right\arrowvert\\
\leq&2 \int_{\mathbb{R}^3} dv\;\left\arrowvert \mathbb{K}_{1,n}\left[F,\lambda F\right]-\mathbb{K}_{1,n}\left[G,\lambda G\right]\right\arrowvert+2\int_{\mathbb{R}^3} dv\;\left\arrowvert\mathbb{K}_{1,n}\left[G,\lambda G\right]\right\arrowvert\Arrowvert F-G\Arrowvert_{L_2^1}\\\leq& C_1 \Arrowvert F-G\Arrowvert_{L_2^1}.
\end{split}
\end{equation}
For the second term we use the result in \pef{lip3} and in \pef{lip8}. We find in this way a constant $ C_2>0 $ independent of $ n $ which bind the second term as follows
\begin{equation}\label{lip9}
\begin{split}
II\leq& \frac{(1-\lambda)^2(1+\lambda)}{(1+\lambda)+(1-\lambda)^2\int_{\mathbb{R}^3}G} \int_{\mathbb{R}^3} dv\;\left\arrowvert \mathbb{K}_{1,n}\left[G,\lambda G\right]-\mathbb{K}_{1,n}\left[G,\mu G\right]\right\arrowvert\\
&+\int_{\mathbb{R}^3} dv\;\left\arrowvert\mathbb{K}_{1,n}\left[G,\mu G\right]\right\arrowvert\left\arrowvert\frac{(1-\lambda)^2(1+\lambda)}{(1+\lambda)+(1-\lambda)^2\int_{\mathbb{R}^3}G}-\frac{(1-\mu)^2(1+\mu)}{(1+\mu)+(1-\mu)^2\int_{\mathbb{R}^3}G}\right\arrowvert\\
\leq& 2 \tilde{C} E_0 \arrowvert\lambda-\mu\arrowvert+\overline{C}E_0\left[(1+\lambda)(1+\mu)\arrowvert(1-\lambda)^2-(1-\mu)^2\arrowvert+(1-\lambda)^2(1-\mu)^2\int_{\mathbb{R}^3}G\arrowvert\lambda-\mu\arrowvert\right]\\
\leq&2 \tilde{C} E_0 \arrowvert\lambda-\mu\arrowvert+\overline{C}E_0(6+E_0)\arrowvert\lambda-\mu\arrowvert\\
\leq&C_2\arrowvert\lambda-\mu\arrowvert.
\end{split}
\end{equation}
Now we consider the estimates for the $ L_2^1 $-norms. By mean of equations \pef{lip2} and \pef{lip5} we can compute for the third term
\begin{equation}\label{lip11}
\begin{split}
III\leq& \frac{1}{1+\lambda}\int_{\mathbb{R}^3} dv\;\left(1+\arrowvert v\arrowvert^2\right)\left\arrowvert \mathbb{K}_{1,n}\left[F,\lambda F\right]-\mathbb{K}_{1,n}\left[G,\lambda G\right]\right\arrowvert+\left\arrowvert\mathbb{K}_{2,n}\left[F,\lambda F\right]-\mathbb{K}_{2,n}\left[G,\lambda G\right]\right\arrowvert\\
&+\frac{(1-\lambda)^2}{(1+\lambda)+(1-\lambda)^2\int_{\mathbb{R}^3}F}\int_{\mathbb{R}^3} dv\;\left\arrowvert \mathbb{K}_{1,n}\left[F,\lambda F\right]\right\arrowvert\int_{\mathbb{R}^3} dv\;\left(1+\arrowvert v\arrowvert^2\right)\left\arrowvert F-G\right\arrowvert\\
&+\frac{(1-\lambda)^2}{(1+\lambda)+(1-\lambda)^2\int_{\mathbb{R}^3}F}\int_{\mathbb{R}^3} dv\;\left\arrowvert \mathbb{K}_{1,n}\left[F,\lambda F\right]-\mathbb{K}_{1,n}\left[G,\lambda G\right]\right\arrowvert\int_{\mathbb{R}^3} dv\;\left(1+\arrowvert v\arrowvert^2\right)\left\arrowvert G\right\arrowvert\\
&+\left\arrowvert\frac{(1-\lambda)^2}{(1+\lambda)+(1-\lambda)^2\int_{\mathbb{R}^3}F}-\frac{(1-\lambda)^2}{(1+\lambda)+(1-\lambda)^2\int_{\mathbb{R}^3}G}\right\arrowvert\int_{\mathbb{R}^3} dv\;\left\arrowvert \mathbb{K}_{1,n}\left[G,\lambda G\right]\right\arrowvert\int_{\mathbb{R}^3} dv\;\left(1+\arrowvert v\arrowvert^2\right)\left\arrowvert G\right\arrowvert\\
\leq&\overline{C}_n \Arrowvert F-G\Arrowvert_{L_2^1}+\overline{C} E_0 \Arrowvert F-G\Arrowvert_{L_2^1}+ \overline{C} E_0 \Arrowvert F-G\Arrowvert_{L_2^1}+\overline{C} E_0^2 \Arrowvert F-G\Arrowvert_{L_2^1}\\
\leq& C_3 \Arrowvert F-G\Arrowvert_{L_2^1},
\end{split}
\end{equation}
for some $ C_3>0 $ depending on $ n $ and $ E_0 $. 

Finally for the last term we proceed in a similar way, using again equations \pef{lip2}, \pef{lip3} and also \pef{lip8} and \pef{lip10}. We therefore find a constant $ C_4 $ depending on $ n $ and $ E_0 $ such that the fourth term is bounded from above by
\begin{equation}\label{lip12}
\begin{split}
IV\leq&\frac{1}{1+\lambda}\int_{\mathbb{R}^3} dv\;\left(1+\arrowvert v\arrowvert^2\right)\left[\left\arrowvert \mathbb{K}_{1,n}\left[G,\lambda G\right]-\mathbb{K}_{1,n}\left[G,\mu G\right]\right\arrowvert+\left\arrowvert\mathbb{K}_{2,n}\left[G,\lambda G\right]-\mathbb{K}_{2,n}\left[G,\mu G\right]\right\arrowvert\right]\\
&+\left\arrowvert\frac{1}{1+\lambda}-\frac{1}{1+\mu}\right\arrowvert\int_{\mathbb{R}^3} dv\;\left(1+\arrowvert v\arrowvert^2\right)\left[\left\arrowvert\mathbb{K}_{1,n}\left[G,\mu G\right]\right\arrowvert+\left\arrowvert\mathbb{K}_{2,n}\left[G,\mu G\right]\right\arrowvert\right]\\
&+\frac{(1-\lambda)^2}{(1+\lambda)+(1-\lambda)^2\int_{\mathbb{R}^3}G}\int_{\mathbb{R}^3} dv\;\left\arrowvert \mathbb{K}_{1,n}\left[G,\lambda G\right]-\mathbb{K}_{1,n}\left[G,\mu G\right]\right\arrowvert\int_{\mathbb{R}^3} dv\;\left(1+\arrowvert v\arrowvert^2\right)\left\arrowvert G\right\arrowvert\\
&+\left\arrowvert\frac{(1-\lambda)^2}{(1+\lambda)+(1-\lambda)^2\int_{\mathbb{R}^3}G}-\frac{(1-\mu)^2}{(1+\mu)+(1-\mu)^2\int_{\mathbb{R}^3}G}\right\arrowvert\int_{\mathbb{R}^3} dv\;\left\arrowvert \mathbb{K}_{1,n}\left[G,\mu G\right]\right\arrowvert\int_{\mathbb{R}^3} dv\;\left(1+\arrowvert v\arrowvert^2\right)\left\arrowvert G\right\arrowvert\\
\leq& \tilde{C}_nE_0\arrowvert\lambda-\mu\arrowvert+ \overline{C}_n E_0 \arrowvert\lambda-\mu\arrowvert+E_0 \tilde{C} \arrowvert\lambda-\mu\arrowvert+ \overline{C} E_0^2 \arrowvert(1+\mu)(1-\lambda)^2-(1+\lambda)(1-\mu)^2\arrowvert\\
\leq& C_4 \arrowvert\lambda-\mu\arrowvert,
\end{split}
\end{equation}
where in the last step we used that $ \arrowvert(1+\mu)(1-\lambda)^2-(1+\lambda)(1-\mu)^2\arrowvert\leq 6\arrowvert\lambda-\mu\arrowvert $.

We are ready now to conclude the proof of this Lemma. Indeed the estimates \pef{lip7}, \pef{lip9}, \pef{lip11} and \pef{lip12} imply the existence of a constant $ C_n $ depending on $ n $ such that
\begin{equation}\label{lip13}
\left\Arrowvert T_n\begin{pmatrix}\lambda\\F\end{pmatrix}-T_n\begin{pmatrix}\mu \\G\end{pmatrix}\right\Arrowvert_X\leq \left(C_2+C_4 \right)\arrowvert\lambda-\mu\arrowvert+\left(C_1+C_3\right) \Arrowvert F-G\Arrowvert_{L_2^1}\leq C_n \left\Arrowvert\begin{pmatrix}\lambda \\F\end{pmatrix}-\begin{pmatrix}\mu \\G\end{pmatrix}\right\Arrowvert_X.
\end{equation}
\end{proof}
\end{lemma}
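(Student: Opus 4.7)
The plan is to split the difference $T_n(\lambda,F)-T_n(\mu,G)$ in the spirit of equation \pef{lip6} into the four contributions coming from (i) changing $F$ to $G$ in the Boltzmann operators, (ii) changing $\lambda$ to $\mu$ in the Boltzmann operators, (iii) changing $F$ to $G$ in the scalar prefactors, and (iv) changing $\lambda$ to $\mu$ in the scalar prefactors. Each piece will be estimated separately, and one combines them via the triangle inequality. The energy bound $E(\lambda,F),E(\mu,G)\leq E_0$ is used throughout to control $\lambda,\mu\in[0,1)$ and $\Arrowvert F\Arrowvert_{L_2^1},\Arrowvert G\Arrowvert_{L_2^1}\leq E_0$.

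The core of the argument is a set of bilinearity estimates for the cut-off operators $\mathbb{K}_{i,n}$. First, I would use the algebraic identity \pef{equality} to write
\begin{equation*}
F(v)F(w)-G(v)G(w)=(F(v)-G(v))F(w)+G(v)(F(w)-G(w)),
\end{equation*}
and similarly for the post-collisional pairs. Combined with the bound $B_{\rm el,n}\leq|v_1-v_2|\leq(1+|v_1|^2)^{1/2}(1+|v_2|^2)^{1/2}$ and the analogous inequalities \pef{usual estimate}, \pef{usual estimate 2} for the nonelastic kernels, this yields an $n$-independent $L^1$ bound
\begin{equation*}
\int_{\mathbb{R}^3}\big|\mathbb{K}_{1,n}[F,\lambda F]-\mathbb{K}_{1,n}[G,\lambda G]\big|\,dv\leq \overline{C}\,\Arrowvert F-G\Arrowvert_{L_2^1}.
\end{equation*}
For the weighted norm the same method works, but only after replacing the kernel by its crude bound $B_{\cdot,n}\leq n$; the key technical point is that for post-collisional arguments one exploits the conservation of kinetic energy (up to the additive constant $2\varepsilon_0$ in the nonelastic case) to turn $(1+|v_3|^2)+(1+|v_4|^2)$ into a sum of weights in the pre-collisional variables. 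This produces an $n$-dependent bound $\overline{C}_n\Arrowvert F-G\Arrowvert_{L_2^1}$ for the $L_2^1$-norm of the same difference. Replacing the second slot $F$ by $\lambda F$ (or by $\mu G$) gives the analogous Lipschitz bounds in the $\lambda$ variable, since $\mathbb{K}_{i,n}[G,\lambda G]-\mathbb{K}_{i,n}[G,\mu G]$ is linear in $\lambda-\mu$ with coefficients controlled by $\int_{\mathbb{R}^6}B_{\cdot,n}\,G\otimes G$.

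The scalar prefactors $\tfrac{(1-\lambda)^2(1+\lambda)}{(1+\lambda)+(1-\lambda)^2\int F}$ and $\tfrac{(1-\lambda)^2}{(1+\lambda)+(1-\lambda)^2\int F}$ are smooth functions of $(\lambda,\int F)$ on $[0,1)\times[0,E_0]$, and their denominators are uniformly $\geq 1$. Thus they are Lipschitz in $\lambda$ and in $\int F$ with constants depending only on $E_0$, and $|\int F-\int G|\leq\Arrowvert F-G\Arrowvert_{L_2^1}$. Likewise $\tfrac{1}{1+\lambda}$ is Lipschitz in $\lambda$ with constant $1$. Plugging these into the four summands, and using the already-established $L^1$ and $L_2^1$ Lipschitz estimates for $\mathbb{K}_{i,n}$ (plus the a priori bound $\int|\mathbb{K}_{1,n}[F,\lambda F]|\leq\overline{C}E_0$ to handle the terms where only the prefactor varies), one collects the constants into a single $C_n=C_n(E_0,\varepsilon_0)$ and obtains the claimed inequality in $X$.

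The main obstacle is keeping track of all the pieces in the $L_2^1$ estimate for $T_{2,n}$: the second component of $T_n$ is a product of a scalar prefactor with $\int\mathbb{K}_{1,n}[F,\lambda]\,dv$ multiplied by $F$, so one must simultaneously bound differences of the prefactor, of the scalar integral, and of the weighted function $F$ in $L_2^1$. The splitting $[T_{2,n}(\lambda,F)-T_{2,n}(\lambda,G)]+[T_{2,n}(\lambda,G)-T_{2,n}(\mu,G)]$ (compare \pef{lip11} and \pef{lip12}) isolates these variations one at a time, so that each factor that varies is paired with bounded factors via Hölder-type reasoning; no single estimate is difficult, but care is needed to ensure every weighted integral is legitimately controlled by $\Arrowvert F-G\Arrowvert_{L_2^1}$ or $|\lambda-\mu|$ and that the $n$-dependence is captured only in the constant.
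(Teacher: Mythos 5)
Your proposal is correct and follows essentially the same route as the paper: the identity \pef{equality} to split bilinear differences, $n$-independent $L^1$ bounds on $\mathbb{K}_{1,n}$ differences via \pef{usual estimate 3}--\pef{usual estimate 2}, $n$-dependent weighted $L_2^1$ bounds via $B_{\cdot,n}\leq n$ and energy conservation of the collision map, Lipschitz control of the scalar prefactors from the lower bound $\geq 1$ on the denominators, and the four-way decomposition that matches \pef{lip6}--\pef{lip12} term by term. No discrepancy worth flagging.
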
 
This Lemma ends Section 3.2 and we can now prove the existence of weak solutions for the system \pef{newsystem1}.
\subsection{Existence of weak solutions}
We are now ready to prove the existence of weak solutions. Because of the uniform bound of the entropy, the cut-off system of Section 3.2 gives us a sequence of functions which converges to the desired weak solutions. However, we still need one result before the actual theorem. We namely need, that for all $ n\geq M $ the $ L^1_4 $-norm is uniformly bounded. This is proved in the next Lemma, which can be proved adapting the theory of the classical Boltzmann equation.
\begin{lemma}\label{lemexistence}
Let $ \left(\lambda_n, F_n\right)\in C^1\left([0,\infty), X\right) $ be the unique solution of the approximating kinetic system $ \partial_t\begin{pmatrix}\lambda_n\\ F_n\end{pmatrix} =T_n \begin{pmatrix}\lambda_n\\ F_n\end{pmatrix}$ with initial value $ (\lambda_0,F_0)\in X_+ $ which satisfies \pef{Assumption}. Assume also that $ \Arrowvert F_0\Arrowvert_{L_4^1}<\infty $. Then there exist constants $ M $ and $ C $ such that for every $ n\geq M $ it holds
\begin{equation}\label{lemexistence1}
\sup_{t\geq 0}\Arrowvert F_n\Arrowvert_{L_4^1}\leq C\:\:\:\:\:\:\text{ and }\:\:\:\:\:\:\int_{0}^t \Arrowvert F_n (., \tau)\min\left(\arrowvert.\arrowvert, n\right)\Arrowvert_{L_4^1}\leq C(1+t).
\end{equation}
\begin{proof} We adapt to our system the proof of Lemma 4.2 in \cite{mischler}.
We will proceed in two steps. We first prove that there exist $ M $ and $ C $ such that
\begin{equation}\label{3lem}
\sup_{t\geq 0}\Arrowvert F_n\Arrowvert_{L_3^1}\leq C\:\:\:\:\:\:\text{ and }\:\:\:\:\:\:\int_{0}^t \Arrowvert F_n (., \tau)\min\left(\arrowvert.\arrowvert, n\right)\Arrowvert_{L_3^1}\leq C(1+t)
\end{equation}
and then with the help of this result, we will prove the Lemma.

Since there exist constants $ c_3 $ and $ c_4 $ such that $ \left(1+\arrowvert v\arrowvert^2\right)^{3/2}\leq c_3\left(1+\arrowvert v\arrowvert^3\right) $ and $ \left(1+\arrowvert v\arrowvert^2\right)^{2}\leq c_4\left(1+\arrowvert v\arrowvert^4\right) $, we will show the result for the weighted norm $ \Arrowvert F\Arrowvert_s:=\int_{\mathbb{R}^3} dv\left(F^1(v)+F^2(v)\right)\arrowvert v\arrowvert^s $ for $ s=3,4 $. Indeed, since the solutions have the $ L_2^1 $-norm bounded, the initial mass is conserved, and so we would conclude the theorem.

We need now some estimates for the elastic and nonelastic kernels. First of all for the elastic kernels we obtain the following relation
\begin{equation}\label{mischler1}
\frac{1}{4}\min\left(\arrowvert v_1\arrowvert, n\right)-\arrowvert v_2\arrowvert\leq \min\left(\arrowvert v_1-v_2\arrowvert, n\right)\leq 4 \left(\arrowvert v_2\arrowvert+\min\left(\arrowvert v_1\arrowvert,n\right)\right).
\end{equation}
We recall the well-known Povzner inequality for the elastic collisions, as given e.g. in \cite{villani} and \cite{povzner}
\begin{equation}\label{povzner}
\left(\arrowvert v_3\arrowvert^s+\arrowvert v_4\arrowvert^s-\arrowvert v_1\arrowvert^s-\arrowvert v_2 \arrowvert^s\right)\leq -K(\theta) \arrowvert v_1\arrowvert^{s}+ C\left(\arrowvert v_1\arrowvert^{s-1}\arrowvert v_2 \arrowvert+\arrowvert v_1\arrowvert\;\arrowvert v_2\arrowvert^{s-1}\right),
\end{equation} 
where $ s>2 $, $ \theta $ is the angle between $ v_1-v_2 $ and $ v_3-v_4 $ and $ K(\theta)\geq 0 $ and strictly positive for $ 0<\theta\leq \frac{\pi}{2} $. Therefore, using \pef{povzner} we have the following inequality
\begin{equation}\label{mischler2}
\int_{\mathbb{S}^2} d\omega \;\left(\arrowvert v_3\arrowvert^3+\arrowvert v_4\arrowvert^3-\arrowvert v_1\arrowvert^3-\arrowvert v_2\arrowvert^3\right)\leq C_1 \left(\arrowvert v_1\arrowvert^2\arrowvert v_2\arrowvert+\arrowvert v_1\arrowvert\arrowvert v_2\arrowvert^2\right)-c_1\arrowvert v_1\arrowvert^3.
\end{equation}
Therefore putting \pef{mischler1} and \pef{mischler2} together and using that $ 0\leq \lambda_n<1 $ we compute
\begin{equation}\label{mischler3}
\begin{split}
&\frac{(1+\lambda_n)^2}{2}\int_{\mathbb{R}^3}dv_1\int_{\mathbb{R}^3}dv_2\int_{\mathbb{S}^2}d\omega\; B_{el,n}(v_1,v_2) F_n(v_1)F_n(v_2)\left(\arrowvert v_3\arrowvert^3+\arrowvert v_4\arrowvert^3-\arrowvert v_1\arrowvert^3-\arrowvert v_2\arrowvert^3\right)\\
\leq& C_2\left(\Arrowvert F_n\Arrowvert_{L_2^1}^2+\Arrowvert F_n\Arrowvert_{L_1^1}\int_{\mathbb{R}^3}dv\left(F_n(v)\arrowvert v\arrowvert^2\min\left(\arrowvert v\arrowvert,n\right)\right)\right)\;+C_3\Arrowvert F_n\Arrowvert_3 \Arrowvert F_n\Arrowvert_{L_1^1}\\
&-C_4\Arrowvert F_n\Arrowvert_{L^1}\int_{\mathbb{R}^3}dv\left(F_n(v)\right)\arrowvert v\arrowvert^3\min\left(\arrowvert v\arrowvert,n\right),\\
\end{split}
\end{equation}
for some constants $ C_2,\,C_3,\,C_4>0 $ depending on $ C_1 $ and $ c_1 $.

We proceed now with some estimates for the terms of the nonelastic collisions. We notice first of all that for the kernels we have following relation due to \pef{usual estimate}
\begin{equation}\label{mischler4}
B_{ne,n}^{12}(\ov_1,\ov_2)\leq \frac{C_0}{2}\arrowvert\ov_1-\ov_2\arrowvert\rchi_{\arrowvert\ov_1-\ov_2\arrowvert\leq n}\left(\ov_1,\ov_2\right)\leq \frac{C_0}{2}\min\left(\arrowvert\ov_1-\ov_2\arrowvert,n\right)\leq 2C_0\left(\arrowvert \ov_2\arrowvert+\min\left(\arrowvert \ov_1\arrowvert,n\right)\right)
\end{equation}
and also since $ \sqrt{n^2-4\varepsilon_0}\leq n $ we estimate
\begin{equation}\label{mischler5}
\begin{split}
B_{ne,n}^{34}(\ov_3,\ov_4)\leq&\frac{C_0}{2}\left(\arrowvert\ov_3-\ov_4\arrowvert+2\sqrt{\varepsilon_0}\right)\rchi_{\arrowvert\ov_3-\ov_4\arrowvert\leq n}\left(\ov_3,\ov_4\right)\\
\leq& \frac{C_0}{2}\left(\min\left(\arrowvert\ov_3-\ov_4\arrowvert,n\right)+2\sqrt{\varepsilon_0}\right)\leq 2C_0\left(\arrowvert \ov_4\arrowvert+\min\left(\arrowvert \ov_3\arrowvert,n\right)+2\sqrt{\varepsilon_0}\right).
\end{split}
\end{equation}
Moreover we have also some estimates for the nonelastic velocities. We can use the well-known relation $ x^3+y^3 \leq \left(x^2+y^2\right)^{3/2} $ for $ x,y\geq0 $ and also that there exists some $ C>0 $ such that the following holds for $ x,y\geq 0 $
\begin{equation}
\begin{split}
\left(\left(x+y\right)^{3/2}-x^{3/2}-y^{3/2}\right)\leq C\left(x^{1/2}y+x y^{1/2}\right).
\end{split}
\end{equation} By the relation for the velocities of the non elastic collisions \pef{nonelasticrel1} we see on one hand that
\begin{equation}\label{mischler6}
\arrowvert \ov_3\arrowvert^3+\arrowvert \ov_4\arrowvert^3-\arrowvert \ov_1\arrowvert^3-\arrowvert \ov_2\arrowvert^3\leq C\left(\arrowvert \ov_1\arrowvert\;\arrowvert \ov_2\arrowvert^2+\arrowvert \ov_1\arrowvert^2\arrowvert \ov_2\arrowvert\right).
\end{equation}
And on the other hand there exists also a constant $ C_{\eps_0} $ depending on $ \eps_0 $ such that
\begin{equation}\label{mischler7}
\arrowvert \ov_1\arrowvert^3+\arrowvert \ov_2\arrowvert^3-\arrowvert \ov_3\arrowvert^3-\arrowvert \ov_4\arrowvert^3\leq C_{\varepsilon_0}\left(1+\arrowvert\ov_3\arrowvert\;\arrowvert \ov_4\arrowvert^2+\arrowvert \ov_3\arrowvert^2\arrowvert \ov_4\arrowvert+\arrowvert \ov_3\arrowvert+\arrowvert \ov_4\arrowvert+\arrowvert \ov_3\arrowvert^2+\arrowvert \ov_4\arrowvert^2\right).
\end{equation}
Therefore using \pef{mischler4} and \pef{mischler6}, we compute
\begin{equation}\label{mischler8}
\begin{split}
&B_{ne,n}^{12}(\ov_1,\ov_2)\left(\arrowvert \ov_3\arrowvert^3+\arrowvert \ov_4\arrowvert^3-\arrowvert \ov_1\arrowvert^3-\arrowvert \ov_2\arrowvert^3\right)\\
\leq&C_0^1 \left(\arrowvert \ov_1\arrowvert\;\arrowvert \ov_2\arrowvert^3+\arrowvert \ov_1\arrowvert^2\arrowvert \ov_2\arrowvert^2+\arrowvert \ov_1\arrowvert\min\left(\arrowvert \ov_1\arrowvert,n\right)\arrowvert \ov_2\arrowvert^2+\arrowvert \ov_1\arrowvert^2\min\left(\arrowvert \ov_1\arrowvert,n\right)\arrowvert \ov_2\arrowvert\right).
\end{split}
\end{equation}
While \pef{mischler5} and \pef{mischler7} imply
\begin{equation}\label{mischler9}
\begin{split}
&B_{ne,n}^{34}(\ov_3,\ov_4)\left(\arrowvert \ov_1\arrowvert^3+\arrowvert \ov_2\arrowvert^3-\arrowvert \ov_3\arrowvert^3-\arrowvert \ov_4\arrowvert^3\right)\\
\leq& C_0^2\left(1+2\arrowvert\ov_4\arrowvert+\arrowvert\ov_3\arrowvert+\min\left(\arrowvert\ov_3\arrowvert,n\right)+\arrowvert\ov_3\arrowvert\arrowvert\ov_4\arrowvert+\min\left(\arrowvert\ov_3\arrowvert,n\right)\arrowvert\ov_4\arrowvert+2\arrowvert\ov_4\arrowvert^2+\arrowvert\ov_3\arrowvert^2\right.\\
&\left.\:\:\:\:\:\:\:\:\:\:+\arrowvert\ov_3\arrowvert\min\left(\arrowvert\ov_3\arrowvert,n\right)2\arrowvert\ov_3\arrowvert^2\arrowvert\ov_4\arrowvert+\arrowvert\ov_3\arrowvert\arrowvert\ov_4\arrowvert^2+\min\left(\arrowvert\ov_3\arrowvert,n\right)\arrowvert\ov_4\arrowvert^2+\arrowvert\ov_3\arrowvert^2\arrowvert\ov_4\arrowvert^2\right.\\
&\left.\:\:\:\:\:\:\:\:\:\:+\min\left(\arrowvert\ov_3\arrowvert,n\right)\arrowvert\ov_3\arrowvert\arrowvert\ov_4\arrowvert^2+\arrowvert\ov_3\arrowvert\arrowvert\ov_4\arrowvert^3+\arrowvert\ov_4\arrowvert^3+\min\left(\arrowvert\ov_3\arrowvert,n\right)\arrowvert\ov_3\arrowvert^2\arrowvert\ov_4\arrowvert+\min\left(\arrowvert\ov_3\arrowvert,n\right)\arrowvert\ov_3\arrowvert^2\right).
\end{split}
\end{equation}
Now putting these estimates \pef{mischler8} and \pef{mischler9} in the weak formulation \pef{nonelasticcutoff4} and evaluating the delta functions with respect to $ (\ov_1,\ov_2) $ or $ (\ov_3,\ov_4) $ we conclude
\begin{equation}\label{mischler10}
\begin{split}
&\int_{\Omega_n}d\ov_1 d\ov_2 d\ov_3 d\ov_4 \delta(\ov_1+\ov_2-\ov_3-\ov_4)\delta(\arrowvert\ov_1\arrowvert^2+\arrowvert\ov_2\arrowvert^2-\arrowvert\ov_3\arrowvert^2-\arrowvert\ov_4\arrowvert^2-2\varepsilon_0)W_{ne}\\
&\;\;\;\;\left(F_n(\ov_1)F_n(\ov_2)+\lambda_n F_n^2(\ov_3)F_n(\ov_4)\right)\left(\arrowvert \ov_3\arrowvert^3+\arrowvert \ov_4\arrowvert^3-\arrowvert \ov_1\arrowvert^3-\arrowvert \ov_2\arrowvert^3\right)\\
\leq& \tilde{C}\left(\Arrowvert F_n\Arrowvert_{L_2^1}^2+\Arrowvert F_n\Arrowvert_{L_1^1}\Arrowvert F_n\Arrowvert_3+ \Arrowvert F_n\Arrowvert_{L^1}\int_{\mathbb{R}^3}dv\left(F_n(v)\arrowvert v\arrowvert^2\min\left(\arrowvert v\arrowvert,n\right)\right) \right).
\end{split}
\end{equation}
Here we also used the monotonicity of the $ L^1_k $-norms estimating all moments less then 2 by the $ L^1_2 $-norm.

Therefore putting together the estimates \pef{mischler3} and \pef{mischler10} we conclude the existence of constants $ \tilde{C}_2 $, $ \tilde{C}_3 $ and $ C_4 $ such that the following holds
\begin{equation}\label{lemexistence2}
\begin{split}
&\frac{1}{1+\lambda_n}\int_{\mathbb{R}^3}dv\;\arrowvert v\arrowvert^3\left(\mathbb{K}_1\left[F_n,\lambda_n\right]+\mathbb{K}_2\left[F_n,\lambda_n\right]\right)\\ 
\leq& \tilde{C}_2\left(\Arrowvert F_n\Arrowvert_{L_2^1}^2+ \Arrowvert F_n\Arrowvert_{L_1^1}\int_{\mathbb{R}^3}dv\;F_n(v)\arrowvert v\arrowvert^{2}\min\left(\arrowvert v\arrowvert,n\right) \right)+\tilde{C}_3\Arrowvert F_n\Arrowvert_{L_1^1}\Arrowvert F_n\Arrowvert_3\\
&-C_4\Arrowvert F_n\Arrowvert_{L^1}\int_{\mathbb{R}^3}dv\;F_n(v)\arrowvert v\arrowvert^3\min\left(\arrowvert v\arrowvert,n\right).
\end{split}
\end{equation}
We now evaluate the term
\begin{equation}\label{lemexistence3}
\frac{(1-\lambda_n)^2}{(1+\lambda_n)+(1-\lambda_n)^2\int_{\mathbb{R}^3}F}\int_{\mathbb{R}^3}dv\; \mathbb{K}_1\left[F_n,\lambda_n\right]\int_{\mathbb{R}^3}dv\;F_n \arrowvert v\arrowvert^3.
\end{equation}
We can use \pef{lip3} and find a constant $ \overline{C} $ which depends on the initial energy, which we know to be conserved, such that
\begin{equation}\label{lemexistence4}
\int_{\mathbb{R}^3}dv\;\left\arrowvert \mathbb{K}_1\left[F_n,\lambda_n\right]\right\arrowvert\leq \overline{C} \Arrowvert F_n\Arrowvert_{L_1^1}.
\end{equation}
We obtain this time the $ L_1^1 $-norm due to a sharper estimate for the collisional kernels in both elastic and nonelastic terms as follows \begin{equation}\label{lemexistence5}
\arrowvert v_1-v_2\arrowvert\leq (1+\arrowvert v_2\arrowvert^2)(1+\arrowvert v_1\arrowvert^2)^{1/2}\leq (1+\arrowvert v_2\arrowvert^2)(1+\arrowvert v_1\arrowvert).
\end{equation}
Thus we easily see that
\begin{equation}\label{lemexistence6}
\frac{(1-\lambda_n)^2}{(1+\lambda_n)+(1-\lambda_n)^2\int_{\mathbb{R}^3}F}\int_{\mathbb{R}^3}dv\; \mathbb{K}_1\left[F_n,\lambda_n\right]\int_{\mathbb{R}^3}dv\;F_n \arrowvert v\arrowvert^3\leq \overline{C}\Arrowvert F_n\Arrowvert_{L_1^1}\Arrowvert F_n\Arrowvert_{3}.
\end{equation}
Therefore there exist constants $ \tilde{C}_2 $, $ \overline{C}_3 $ and $ C_4 $ such that for the time derivative of the third moment the following holds
\begin{equation}\label{lemexistence7}
\begin{split}
\partial_t\Arrowvert F_n\Arrowvert_{3}\leq &2\tilde{C}_2\left(\Arrowvert F_n\Arrowvert_{L_{2}^1}\Arrowvert F_n\Arrowvert_{L_2^1}+ \Arrowvert F_n\Arrowvert_{L_1^1}\int_{\mathbb{R}^3}dv\;F_n(v)\arrowvert v\arrowvert^{2}\min\left(\arrowvert v\arrowvert,n\right) \right)+\overline{C}_3\Arrowvert F_n\Arrowvert_{L_1^1}\Arrowvert F_n\Arrowvert_3\\
&-C_4\Arrowvert F_n\Arrowvert_{L^1}\int_{\mathbb{R}^3}dv\;F_n(v)\arrowvert v\arrowvert^3\min\left(\arrowvert v\arrowvert,n\right).
\end{split}
\end{equation}
Moreover, for all $ \delta>0 $ there exists a constant $ C_\delta $ such that
\begin{equation}\label{mischler12}
\int_{\mathbb{R}^3}dv\left(F_n(v)\right)\arrowvert v\arrowvert^2\left(1+\min\left(\arrowvert v\arrowvert,n\right)\right)\leq C_\delta\Arrowvert F_n\Arrowvert_{L_1^1}+\delta\int_{\mathbb{R}^3}dv\left(F_n(v)\right)\arrowvert v\arrowvert^3\min\left(\arrowvert v\arrowvert,n\right).
\end{equation}
This can be seen using the Young's Inequality. Indeed for any $ \delta>0 $ there exist constants $ K_\delta^i>0 $ such that
\begin{equation}\label{delta1}
\arrowvert v\arrowvert^2=\arrowvert v\arrowvert^{1/2}\arrowvert v\arrowvert^{3/2}\leq K^1_\delta \arrowvert v\arrowvert+\delta \arrowvert v\arrowvert^3\leq K_\delta^1\arrowvert v\arrowvert+\delta+\delta\arrowvert v\arrowvert^3\min\left(\arrowvert v\arrowvert, n\right).
\end{equation}
and similarly we estimate also
\begin{equation}\label{delta2}
\arrowvert v\arrowvert^2\min\left(\arrowvert v\arrowvert,n\right)=\min\left(\arrowvert v\arrowvert,n\right)^{1/3}\arrowvert v\arrowvert^2\min\left(\arrowvert v\arrowvert,n\right)^{2/3}\leq K_\delta^2\min\left(\arrowvert v\arrowvert,n\right)+\delta\arrowvert v\arrowvert^3\min\left(\arrowvert v\arrowvert,n\right).
\end{equation}
These computations yield the existence of a constant $ C_\delta>0 $ such that
\begin{equation}
\arrowvert v\arrowvert^2\left(1+\min\left(\arrowvert v\arrowvert,n\right)\right)\leq C_\delta \left(1+\arrowvert v\arrowvert\right)+\delta \arrowvert v\arrowvert^3\min\left(\arrowvert v\arrowvert,n\right).
\end{equation}
Moreover for all $ n\geq M $ the following is also true
\begin{equation}\label{mischler13}
\arrowvert v\arrowvert^3\leq \frac{1}{M}\arrowvert v\arrowvert^3\min\left(\arrowvert v\arrowvert,n\right)+M^3.
\end{equation}
The last equations \pef{mischler12} and \pef{mischler13} imply also with the help of the monotonicity of the $ L^1_k $-norms and the conservation of mass, momentum and energy the existence of a constant $ C_{\delta, M} $, which also depends on initial mass and energy, such that 
\begin{equation}\label{mischler14}
\begin{split}
\partial_t\Arrowvert F_n\Arrowvert_3+&\left(\frac{C_4}{2} \Arrowvert F_n\Arrowvert_{L^1}-C_5\left(\Arrowvert F_n\Arrowvert_{L^1}+\Arrowvert F_n\Arrowvert_{L_2^1}\right)\left(\delta+\frac{1}{M}\right)\right)\\
&\left(\Arrowvert F_n\Arrowvert_3+\int_{\mathbb{R}^3}dv\left(F_n(v)\right)\arrowvert v\arrowvert^3\min\left(\arrowvert v\arrowvert,n\right)\right)\\
\leq&C_{\delta,M}.
\end{split}
\end{equation}
Taking now $ \delta $ small enough and $ M $ large enough Gr\"onwall's Lemma implies the estimate \pef{3lem}.\\

We can now proceed with the proof of the Lemma. We do similar computations, using now the fact, that $ \Arrowvert F_n\Arrowvert_{L_3^1} $ is uniformly bounded.

Again the Povzner inequality \pef{povzner} implies
\begin{equation}\label{mischler15}
\int_{\mathbb{S}^2} d\omega \;\left(\arrowvert v_3\arrowvert^4+\arrowvert v_4\arrowvert^4-\arrowvert v_1\arrowvert^4-\arrowvert v_2\arrowvert^4\right)\leq C_1 \left(\arrowvert v_1\arrowvert^3\arrowvert v_2\arrowvert+\arrowvert v_1\arrowvert\arrowvert v_2\arrowvert^3\right)-c_1\arrowvert v_1\arrowvert^4.
\end{equation}
This, together with the estimates in \pef{mischler1} implies similarly as in the proof of \pef{mischler3}
\begin{equation}\label{mischler16}
\begin{split}
&\frac{(1+\lambda_n)^2}{2}\int_{\mathbb{R}^3}dv_1\int_{\mathbb{R}^3}dv_2\int_{\mathbb{S}^2}d\omega\; B_{el,n}(v_1,v_2) F_n(v_1)F_n(v_2)\left(\arrowvert v_3\arrowvert^4+\arrowvert v_4\arrowvert^4-\arrowvert v_1\arrowvert^4-\arrowvert v_2\arrowvert^4\right)\\
\leq& C_2\left(\Arrowvert F_n\Arrowvert_{L_2^1}\Arrowvert F_n\Arrowvert_{L_3^1}+\Arrowvert F_n\Arrowvert_{L_1^1}\int_{\mathbb{R}^3}dv\left(F_n(v)\right)\right)\arrowvert v\arrowvert^3\min\left(\arrowvert v\arrowvert,n\right)\;+C_3\Arrowvert F_n\Arrowvert_4 \Arrowvert F_n\Arrowvert_{L_1^1}\\
&-C_4\Arrowvert F_n\Arrowvert_{L^1}\int_{\mathbb{R}^3}dv\left(F_n(v)\right)\arrowvert v\arrowvert^4\min\left(\arrowvert v\arrowvert,n\right).\\
\end{split}
\end{equation}
Since $ x^4+y^4\leq (x^2+y^2)^2 $ for the nonelastic velocities we do the following computations 
\begin{equation}\label{mischler17}
\arrowvert \ov_3\arrowvert^4+\arrowvert \ov_4\arrowvert^4-\arrowvert \ov_1\arrowvert^4-\arrowvert \ov_2\arrowvert^4\leq 2\arrowvert \ov_1\arrowvert^2\arrowvert \ov_2\arrowvert^2
\end{equation}
and also
\begin{equation}\label{mischler18}
\arrowvert \ov_1\arrowvert^4+\arrowvert \ov_2\arrowvert^4-\arrowvert \ov_3\arrowvert^4-\arrowvert \ov_4\arrowvert^4\leq C_{\eps_0}\left(1+\arrowvert \ov_3\arrowvert^2+\arrowvert \ov_4\arrowvert^2+\arrowvert \ov_3\arrowvert^2\arrowvert \ov_4\arrowvert^2\right).
\end{equation}
Therefore with a similar computation as in \pef{mischler10}, using the estimates for the non elastic kernels \pef{mischler4} and \pef{mischler5} we also obtain
\begin{equation}\label{mischler19}
\begin{split}
&\int_{\Omega_n}d\ov_1 d\ov_2 d\ov_3 d\ov_4 \delta(\ov_1+\ov_2-\ov_3-\ov_4)\delta(\arrowvert\ov_1\arrowvert^2+\arrowvert\ov_2\arrowvert^2-\arrowvert\ov_3\arrowvert^2-\arrowvert\ov_4\arrowvert^2-2\varepsilon_0)W_{ne}\\
&\;\;\;\;\left(F_n(\ov_1)F_n(\ov_2)-\lambda_nF_n(\ov_3)F_n(\ov_4)\right)\left(\arrowvert \ov_3\arrowvert^4+\arrowvert \ov_4\arrowvert^4-\arrowvert \ov_1\arrowvert^4-\arrowvert \ov_2\arrowvert^4\right)\\
\leq& \tilde{C}\Arrowvert F_n\Arrowvert_{L_3^1}\Arrowvert F_n\Arrowvert_{L_2^1}.
\end{split}
\end{equation}
This time we do not have more terms, since as we can see in \pef{mischler17} and \pef{mischler18} we do not have power bigger than $ 2 $ and so multiplying by terms of power $ 1 $, we can not obtain anything more.
The estimate \pef{lemexistence6} holds also when we consider the fourth moment, and thus putting now estimates \pef{mischler16}, \pef{mischler19} and \pef{lemexistence6} together we can infer that
\begin{equation}\label{mischler20}
\begin{split}
\partial_t\Arrowvert F\Arrowvert_4
\leq& \tilde{C}_2\left(\Arrowvert F_n\Arrowvert_{L_3^1}\Arrowvert F_n\Arrowvert_{L_2^1}+ \Arrowvert F_n\Arrowvert_{L_1^1}\int_{\mathbb{R}^3}dv\left(F_n^1(v)+F_n^2(v)\right)\arrowvert v\arrowvert^3\min\left(\arrowvert v\arrowvert,n\right) \right)+\tilde{C}_4\Arrowvert F_n\Arrowvert_{L_1^1}\Arrowvert F_n\Arrowvert_3\\
&-C_4\Arrowvert F_n\Arrowvert_{L^1}\int_{\mathbb{R}^3}dv\left(F_n(v)\right)\arrowvert v\arrowvert^4\min\left(\arrowvert v\arrowvert,n\right).
\end{split}
\end{equation}
By means of of the Young's Inequality we compute similarly as before in \pef{delta1} and \pef{delta2} for $ \delta>0 $ that
\begin{equation}
\arrowvert v\arrowvert^3=\arrowvert v\arrowvert^{1/3}\arrowvert v\arrowvert^{8/3}\leq K^1_\delta \arrowvert v\arrowvert+\delta\arrowvert v\arrowvert^{4}\leq K^1_\delta \arrowvert v\arrowvert+\delta+\delta\arrowvert v\arrowvert^{4}\min\left(\arrowvert v\arrowvert,n\right)
\end{equation}
and also
\begin{equation}
\arrowvert v\arrowvert^3\min\left(\arrowvert v\arrowvert,n\right)=\min\left(\arrowvert v\arrowvert,n\right)^{1/4}\arrowvert v\arrowvert^3\min\left(\arrowvert v\arrowvert,n\right)^{3/4}\leq K^2_\delta\min\left(\arrowvert v\arrowvert,n\right)+\delta\arrowvert v\arrowvert^4\min\left(\arrowvert v\arrowvert,n\right).
\end{equation}
Again, also in this case, these estimates imply that for every $ \delta $ there exists a constant $ C_\delta $ such that
\begin{equation*}
\int_{\mathbb{R}^3}dv\left(F_n^1(v)+F_n^2(v)\right)\arrowvert v\arrowvert^3\left(1+\min\left(\arrowvert v\arrowvert,n\right)\right)\leq C_\delta\Arrowvert F_n\Arrowvert_{L_1^1}+\delta\int_{\mathbb{R}^3}dv\left(F_n(v)\right)\arrowvert v\arrowvert^4\min\left(\arrowvert v\arrowvert,n\right).
\end{equation*}
Moreover similarly as in \pef{mischler13}, for all $ n\geq M $ we have $ \arrowvert v\arrowvert^4\leq \frac{1}{M}\arrowvert v\arrowvert^4\min\left(\arrowvert v\arrowvert,n\right)+M^4 $. Hence, arguing as for \pef{mischler14} we obtain the differential inequality
\begin{equation}\label{mischler21}
\begin{split}
\partial_t\Arrowvert F_n\Arrowvert_4+&\left(\frac{C_4}{2} \Arrowvert F_n\Arrowvert_{L^1}-C_5\left(\Arrowvert F_n\Arrowvert_{L^1}+\Arrowvert F_n\Arrowvert_{L_2^1}\right)\left(\delta+\frac{1}{M}\right)\right)\\
&\left(\Arrowvert F_n\Arrowvert_4+\int_{\mathbb{R}^3}dv\left(F_n(v)\right)\arrowvert v\arrowvert^4\min\left(\arrowvert v\arrowvert,n\right)\right)\\
\leq&C_{\delta,M}
\end{split}
\end{equation}
for some constant $ C_{\delta,M} $. Taking again $ \delta $ small enough and $ M $ large enough Gr\"onwall implies the Lemma.

\end{proof} 
\end{lemma}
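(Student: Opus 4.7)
The plan is to establish moment propagation for the cut-off system in two steps, adapting the classical Boltzmann argument of \cite{mischler}: first obtain a uniform-in-$t$ bound on $\Arrowvert F_n\Arrowvert_{L^1_3}$, then bootstrap to a uniform bound on $\Arrowvert F_n\Arrowvert_{L^1_4}$. The new difficulties compared with the classical case are the presence of energy-shifting nonelastic collisions and the coupling to $\lambda_n$ through the denominator $(1+\lambda_n)+(1-\lambda_n)^2\int F_n$ in $T_n$.

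For each $s\in\{3,4\}$ I would test the equation for $F_n$ against $\arrowvert v\arrowvert^s$ and use the weak formulation of the cut-off collision operators from the proof of Lemma \ref{lemcutoff1}. For the elastic part, combining the elementary bound $\min(\arrowvert v_1-v_2\arrowvert,n)\geq\tfrac14\min(\arrowvert v_1\arrowvert,n)-\arrowvert v_2\arrowvert$ with the Povzner inequality
\begin{equation*}
\int_{\mathbb{S}^2}d\omega\;\left(\arrowvert v_3\arrowvert^s+\arrowvert v_4\arrowvert^s-\arrowvert v_1\arrowvert^s-\arrowvert v_2\arrowvert^s\right)\leq -K\arrowvert v_1\arrowvert^s+C\left(\arrowvert v_1\arrowvert^{s-1}\arrowvert v_2\arrowvert+\arrowvert v_1\arrowvert\,\arrowvert v_2\arrowvert^{s-1}\right)
\end{equation*}
produces a dissipative contribution $-c\Arrowvert F_n\Arrowvert_{L^1}\int F_n\arrowvert v\arrowvert^s\min(\arrowvert v\arrowvert,n)dv$ plus lower-order mixed moments controlled by the conserved mass and energy. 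For the nonelastic part, the kernel bounds \pef{usual estimate}--\pef{usual estimate 2} combined with algebraic inequalities of the form $\arrowvert \ov_3\arrowvert^s+\arrowvert \ov_4\arrowvert^s-\arrowvert \ov_1\arrowvert^s-\arrowvert \ov_2\arrowvert^s\leq C(\arrowvert \ov_1\arrowvert\,\arrowvert \ov_2\arrowvert^{s-1}+\arrowvert \ov_1\arrowvert^{s-1}\arrowvert \ov_2\arrowvert)$ and its reversed $\varepsilon_0$-shifted counterpart ensure that the nonelastic contribution only produces lower-order moments. The coupling term $\frac{(1-\lambda_n)^2F_n}{\cdots}\int\mathbb{K}_{1,n}[F_n,\lambda_n]dv$ is controlled by $\overline{C}\Arrowvert F_n\Arrowvert_{L^1_1}\Arrowvert F_n\Arrowvert_{L^1_s}$ using the sharper kernel estimate $\arrowvert v_1-v_2\arrowvert\leq(1+\arrowvert v_2\arrowvert^2)(1+\arrowvert v_1\arrowvert)$ to place one factor in $L^1_1$.

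To close the argument I would invoke Young's inequality in the form $\arrowvert v\arrowvert^{s-1}\min(\arrowvert v\arrowvert,n)\leq K_\delta\min(\arrowvert v\arrowvert,n)+\delta\arrowvert v\arrowvert^s\min(\arrowvert v\arrowvert,n)$ together with the cutoff trick $\arrowvert v\arrowvert^s\leq\tfrac{1}{M}\arrowvert v\arrowvert^s\min(\arrowvert v\arrowvert,n)+M^s$ valid for $n\geq M$. Choosing $\delta$ small and $M$ large, these absorb every sub-dissipative term into $-\tfrac{c}{2}\Arrowvert F_n\Arrowvert_{L^1}\int F_n\arrowvert v\arrowvert^s\min(\arrowvert v\arrowvert,n)dv$ modulo a bounded source, producing a differential inequality of the form
\begin{equation*}
\partial_t\Arrowvert F_n\Arrowvert_{L^1_s}+c'\left(\Arrowvert F_n\Arrowvert_{L^1_s}+\int_{\mathbb{R}^3}F_n\arrowvert v\arrowvert^s\min(\arrowvert v\arrowvert,n)dv\right)\leq C_{\delta,M},
\end{equation*}
to which Gronwall's lemma applies, giving both assertions in \pef{lemexistence1}. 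The bootstrap mechanism is essential: the $s=3$ estimate produces a uniform bound on $\Arrowvert F_n\Arrowvert_{L^1_3}$, which is then needed to close the $s=4$ step because the lower-order contributions that arise at $s=4$ involve a $L^1_3$-norm.

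The main obstacle will be the careful bookkeeping of the nonelastic terms: the $\varepsilon_0$-shifted identity produces a collection of $\varepsilon_0$-dependent terms of various moment orders, and one must verify case by case that each one is strictly below $\arrowvert v\arrowvert^s\min(\arrowvert v\arrowvert,n)$ and can therefore be absorbed by the Young and cutoff arguments. The conservation laws proved in Lemma \ref{lemcutoff1} guarantee that $\Arrowvert F_n\Arrowvert_{L^1}$ and $\Arrowvert F_n\Arrowvert_{L^1_2}$ remain uniformly controlled, which is exactly what is needed for the coefficient $\tfrac{c}{2}\Arrowvert F_n\Arrowvert_{L^1}-C(\delta+\tfrac{1}{M})(\Arrowvert F_n\Arrowvert_{L^1}+\Arrowvert F_n\Arrowvert_{L^1_2})$ in front of the dissipative term to remain strictly positive for all $t\geq0$.
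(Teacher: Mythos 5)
Your proposal follows the paper's proof essentially step for step: the same two-stage bootstrap ($L^1_3$ then $L^1_4$), the same Povzner dissipation for the elastic part, the same algebraic/kernel bounds for the $\varepsilon_0$-shifted nonelastic terms, the same sharper estimate $\arrowvert v_1-v_2\arrowvert\leq(1+\arrowvert v_2\arrowvert^2)(1+\arrowvert v_1\arrowvert)$ to control the $\int\mathbb{K}_{1,n}$ coupling through $\Arrowvert F_n\Arrowvert_{L^1_1}$, and the same Young plus cutoff absorption leading to a Gr\"onwall-closable differential inequality. This is the approach the paper takes; no significant deviation.
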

Finally we are ready for the proof of the existence of weak solutions of the new kinetic system \pef{newsystem1}.
\begin{theorem}\label{infinityexistence1}
Let $ (\lambda_0,F_0)\in X_+\cap\tilde{X} $ with $ F_0\log(F_0)\in L^1\left(\mathbb{R}^3\right) $ and let $ E(\lambda_0,F_0):=E_0<\infty $. Assume $ (\lambda_0,F_0)\in \tilde{X} $ satisfies \pef{Assumption}. Let $ T>0 $ then there exist a weak solution $ (\lambda, F)\in C\left([0,T], X\right)$ of the kinetic system \pef{newsystem} for the initial value $ (\lambda_0, F_0) $.

\begin{remark}
Since $ T $ is arbitrary, this theorem implies the existence of a weak solution \linebreak$ (\lambda, F)\in C\left([0,\infty), X\right)$.
\end{remark}

\begin{proof}
The proof follow the usual strategy for this kind of kinetic equations. Such proof structure are the one we can see for example in \cite{arkerydI, arkerydII}.
First of all we start constructing a sequence of functions in $ C^1\left([0,\infty), X\right) $ which converge in some sense a solution of the desired equation. Let $ n\in\mathbb{N} $. We consider the cut-off equation $  \partial_t\begin{pmatrix}\lambda_n\\ F_n\end{pmatrix}~=~T_n\begin{pmatrix}\lambda_n\\ F_n\end{pmatrix} $ with initial value $ (\lambda_0, F_0) $ as in the assumption. Lemma \ref{lemcutoff5} together with the ODE theory implies the existence of unique solutions $ (\lambda_n,F_n)\in C^1\left([0,\infty), X\right) $. Indeed the operator is locally Lipschitz and due to the bounded kernel, we also have energy conservation, which implies the existence of a global solution. Moreover the solution is non-negative and lies in $ X_+ $, due to the construction of the fixed point. Indeed we can write the operator $ T_n $ in its positive and negative part as follows
\begin{equation}\label{Positivity}
T_{i,n}\begin{pmatrix}\lambda\\F\end{pmatrix}= T'_{i,n}\begin{pmatrix}\lambda\\F\end{pmatrix}-\left(\lambda\delta_{i1}+F\delta_{i2}\right) H_{i,n}\begin{pmatrix}\lambda\\F\end{pmatrix},
\end{equation}
where we consider
\begin{equation}\label{Positivity2}
\begin{split}
H_{1,n}\begin{pmatrix}\lambda\\F\end{pmatrix}&=\frac{\lambda+1}{(1+\lambda)+(1-\lambda)^2\int_{\mathbb{R}^3}F}2\int_{\mathbb{R}^3}d\ov_1\int_{\mathbb{R}^3}d\ov_2\int_{\mathbb{S}^2}d\omega B_{ne,n}^{12}(\ov_1,\ov_2)F(\ov_1)F(\ov_2)\\
&+\frac{\lambda^3+1}{(1+\lambda)+(1-\lambda)^2\int_{\mathbb{R}^3}F}2\int_{\mathbb{R}^3}d\ov_3\int_{\mathbb{R}^3}d\ov_4\int_{\mathbb{S}^2}d\omega B_{ne,n}^{34}(\ov_3,\ov_4)F(\ov_3)F(\ov_4)\\
\end{split}
\end{equation}
and 
\begin{equation}\label{Positivity3}
\begin{split}
H_{2,n}\begin{pmatrix}\lambda\\F\end{pmatrix}&=\frac{1}{(1+\lambda)}\left[\int_{\mathbb{R}^3}dv_2\int_{\mathbb{S}^2}d\omega B_{el,n}(v_1,v_2)F(v_2)+\int_{\mathbb{R}^3}d\ov_2\int_{\mathbb{S}^2}d\omega B_{ne,n}^{12}(\ov_1,\ov_2)F(\ov_2)\right.\\&\left.\:\:\:\:\:\:\:\:\:\:\:\:\:\:\:\:\:\:\:\:\:+2\lambda \int_{\mathbb{R}^3}d\ov_4\int_{\mathbb{S}^2}d\omega B_{ne,n}^{34}(\ov_3,\ov_4)F(\ov_4)\right]\\
&+\frac{1-\lambda^2}{(1+\lambda)+(1-\lambda)^2\int_{\mathbb{R}^3}F}2\int_{\mathbb{R}^3}d\ov_1\int_{\mathbb{R}^3}d\ov_2\int_{\mathbb{S}^2}d\omega B_{ne,n}^{12}(\ov_1,\ov_2)F(\ov_1)F(\ov_2).\\
\end{split}
\end{equation}
In this way we see that the map we use for the fixed-point argument preserves positivity. It is indeed given by
\begin{equation}\label{Positivity4}
\begin{split}
A_n\begin{pmatrix}\lambda\\F\end{pmatrix}=&\begin{pmatrix}\lambda_0\exp\left(-\int_0^tH_{1,n}(\lambda, F)ds\right)\\F_0\exp\left(-\int_0^tH_{2,n}(\lambda, F)ds\right)\end{pmatrix}+\begin{pmatrix}\int_{0}^t\exp\left(-\int_s^tH_{1,n}(\lambda, F)d\tau\right)T'_{1,n}(\lambda, F)ds\\\int_{0}^t\exp\left(-\int_s^tH_{2,n}(\lambda, F)d\tau\right)T'_{2,n}(\lambda, F)ds\end{pmatrix}.
\end{split}
\end{equation}

Hence we have global strong solutions and we will work with the sequence $ (\lambda_n,F_n)_n $.\\

From Lemma \ref{lemcutoff4} we know that the sequence $ (\lambda_n, F_n) $ is uniformly bounded in $ C\left([0,T], X\right) $. Moreover, $ (\lambda_n(t))_n$ is uniformly equicontinuous. Indeed the computation in equation \pef{lip3} implies that for all $ n $
\begin{equation}\label{existence1}
\arrowvert T_{1,n}\left(\lambda_n, F_n\right)\arrowvert\leq 2\int_{\mathbb{R}^3}dv\;\left\arrowvert \mathbb{K}_{1,n}\left[F_n,\lambda_N\right]\right\arrowvert\leq 2\overline{C} E_0.
\end{equation}
Therefore the definition of weak solution yields the claimed uniformly equicontinuity since \linebreak $ \arrowvert \lambda_n(t_1)-\lambda_n(t_2)\arrowvert\leq 2\overline{C} E_0\arrowvert t_1-t_2\arrowvert $. Applying Arzela-Ascoli's Theorem we find a subsequence $ \lambda_{n_k} $ and a function $ \lambda~\in~ C\left([0,T], \mathbb{R}\right) $ with $ \lambda\in[0,1) $ such that
\begin{equation}\label{existence2}
\lim\limits_{k\to\infty}\lambda_{n_k}\to\lambda\:\:\:\text{ in }C\left(\left[0,T\right], \Arrowvert . \Arrowvert_{sup}\right).
\end{equation} 

There exist also a subsequence $ F_{n_k} $ and a function $ F\in C\left([0,T], L_2^1\left(\mathbb{R}^3\right)\right) $, $ F\geq 0 $ such that $ F_{n_k}\rightharpoonup F $ for all $ t\in[0,T] $ in $ L^1\left(\mathbb{R}^3\right) $. By Lemma \ref{lemexistence} there exists some constant $ C>0 $ such that $ \Arrowvert F_n\Arrowvert_{L_4^1}\leq C $ for all $ t\in[0,\infty) $ and for all $ n\geq M $, so Lemma \ref{lemcutoff4} implies the existence of a constant $ C $ independent of $ n $ such that $ \int_{\mathbb{R}^3}dv\;F_n\log(F_n)\leq C $ for all $ n\geq M $ and so $ F_n\log(F_n)\in L^1\left(\mathbb{R}^3\right) $. The uniform boundedness of the $ L^1_4 $-norm implies also, as we remarked on page 15, that $ \mathbb{K}_{i,n}\left[F_n,\lambda_n\right]\in L^1_3\left(\mathbb{R}^3\right) $ uniformly for all $ n\geq M $. Therefore, the following estimate holds true
\begin{equation}\label{equicontinuity}
\sup_{t\leq T} \left\Arrowvert \partial_tF\right\Arrowvert_{L_2^1}\leq\sup_{t\leq T}\left( \left\Arrowvert \mathbb{K}_{1,n}\left[F_n,\lambda_n\right]+\mathbb{K}_{2,n}\left[F_n,\lambda_n\right]\right\Arrowvert_{L_{2}^1}+\left\Arrowvert F_n \int_{\mathbb{R}^3}dv\; \mathbb{K}_{1,n}\left[F_n,\lambda_n\right]\right\Arrowvert_{L_2^1}\right)\leq C
\end{equation}
for some constant $ C>0 $ depending on the uniform bound of the $ L^1_4 $-norm and on the initial mass and energy. Thus, Dunford-Pettis Theorem and the uniformly equicontinuity of $ F_n\in C\left([0,T], L^1_2\left(\mathbb{R}^3\right)\right) $ imply the weak-$ L^1 $ compactness of this sequence. Indeed the sequence $ F_n $ is uniformly bounded in $ L^1_2\left(\mathbb{R}^3\right) $ by the conservation of initial energy. Moreover, since the $ L^1_4 $-norm is bounded, as we have seen in Lemma \ref{lemexistence}, the sequence is tight
\begin{equation}
\int_{\arrowvert v\arrowvert\geq R} dv F_n(v)\leq \int_{\arrowvert v\arrowvert\geq R} dv F_n(v)\frac{\left(1+\arrowvert v\arrowvert^2\right)^2}{\left( 1+ R^2\right)^2}\leq \frac{C}{\left( 1+R^2\right)^2}\underset{R\to\infty}{\longrightarrow}0.
\end{equation}
Finally, \pef{lemcutoff4} implies the existence of a constant $ K_0>0 $, which depends only on the initial energy and entropy, such that $ \int_{\mathbb{R}^3}dv\; F_n \log^+(F_n)\leq K_0 $, so that the sequence is also uniformly integrable
\begin{equation}\int_{F_n>e^N}dv F_n(v)\leq \frac{1}{N}\int_{F_n>e^N}dv F_n(v)\log^+\left(F_n(v)\right)\leq \frac{K_0}{N}\underset{N\to\infty}{\longrightarrow}0.		
\end{equation}
Thus, we conclude the weak $ L^1 $-convergence for a subsequence $F_{n_k} $ (which can be chosen common to the one for $ \lambda_n $).

Moreover we easily see that for $ 0\leq l\leq 2 $ the following holds
\begin{equation}
\int_{\arrowvert v\arrowvert\geq R} dv F_n(v)\left(1+\arrowvert v\arrowvert^l\right)\leq K\int_{\arrowvert v\arrowvert\geq R}dv F_n(v)\left(1+\arrowvert v\arrowvert^2\right)^{l/2}\leq \frac{K\; C}{\left(1+R^2\right)^{2-l/2}}\underset{R\to\infty}{\longrightarrow}0.
\end{equation}
This implies 
\begin{equation}\label{existence3}
\int_{\mathbb{R}^3}dv\; F_{n_k}(v)\varphi(v)\underset{k\to\infty}{\longrightarrow}\int_{\mathbb{R}^3}dv\; F(v)\varphi(v)
\end{equation}
for all $ \varphi \frac{1}{1+\arrowvert v\arrowvert^2}\in L^{\infty}\left(\mathbb{R}^3\right) $. Hence, $ F $ satisfies the conservation of mass and momentum and also $ F\in L_2^1\left(\mathbb{R}^3\right) $. Moreover by the uniform convergence of $ \lambda_{n_k} $ we see that $ (\lambda, F) $ still satisfies the conservation of energy. Now it only remains to pass to the limit in the integrals. Equation \pef{existence3} implies the weak $ L^1 $-convergence of
\begin{equation}\label{existence4}
BF_{n_k}(v)F_{n_k}(w)\rightharpoonup BF(v)F(w)
\end{equation}
for a kernel $ B\in\{B_{el}, B_{ne}^{12}, B_{ne}^{34}\} $. It is not difficult to see that $ \rchi_{\arrowvert v-w\arrowvert> R}\leq \rchi_{(1+\arrowvert v\arrowvert^2)\geq R^2}+\rchi_{(1+\arrowvert w\arrowvert^2)\geq R^2} $. Defining $ R_k\in\{n_k, \sqrt{n_k^2-4\varepsilon_0}\} $ and $ R_k^1:=R_k-1 $ we see therefore that for $ k\to \infty $
\begin{equation}\label{existence5}
\begin{split}
0\leq&\int_{\mathbb{R}^3}dv\;\int_{\mathbb{R}^3}dw\;\int_{\mathbb{S}^2}d\omega\; (B-B_{n_k})F_{n_k}(v)F_{n_k}(w)\leq \int_{\arrowvert v-w\arrowvert\geq R_k}dv\;dw\;\int_{\mathbb{S}^2}d\omega\;BF_{n_k}(v)F_{n_k}(w)\\
\leq& C \left[\int_{\arrowvert v\arrowvert\geq R_{k}^1}dv\;(1+\arrowvert v\arrowvert^2)F_{n_k}(v)\right]\leq C \tilde{C} \frac{1}{\left(1+\left(R_k^1\right)^2\right)}\underset{k\to \infty}{\longrightarrow} 0.
\end{split}
\end{equation}
where the constant $ C $ depends on the initial energy and the constant $ \tilde{C} $ is the uniform bound of the $ L^1_4 $-norm of the solutions $ F_{n_k} $ for all $ n_k\geq M$. Since $ B\geq B_{n_k} $, we can conclude from this computation that $ \Arrowvert \left(B-B_{n_k}\right)F_{n_k}F_{n_k}\Arrowvert_{L^1}\to 0 $ as $ k\to\infty $.
This result, together with the uniform convergence of $ \lambda_{n_k} $ and the weak $ L^1 $-convergence of $ F_{n_k} $ implies that $ (\lambda, F)\in C\left([0,T], X\right) $ is the desired weak solution. Moreover, since the $ L_4^1 $-norm of the sequence $ F_{n_k} $ is uniformly bounded, also the solution $ F $ has this property. Indeed for all $ R>0 $
\begin{equation}\label{existence6}
\int_{\arrowvert v\arrowvert\leq R} dv F(v)\left(1+\arrowvert v\arrowvert^2\right)^2=\lim\limits_{k\to\infty}\int_{\arrowvert v\arrowvert\leq R} dv F_{n_k}(v)\left(1+\arrowvert v\arrowvert^2\right)^2\leq \tilde{C}
\end{equation}
\end{proof}
\end{theorem}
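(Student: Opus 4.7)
The plan is to construct the weak solution as a limit of the strong solutions $(\lambda_n,F_n)$ to the cut-off system $\partial_t \binom{\lambda_n}{F_n} = T_n\binom{\lambda_n}{F_n}$ from Section 3.2. First, existence of the approximants follows from the ODE theory in Banach spaces: Lemma \ref{lemcutoff5} gives local Lipschitz continuity of $T_n$ on sets of bounded energy, and Lemma \ref{lemcutoff1} provides the conservation of energy which prevents finite-time blow-up, yielding a unique global strong solution $(\lambda_n,F_n)\in C^1([0,\infty),X)$. Positivity is preserved by splitting $T_{i,n}=T'_{i,n}-(\lambda\delta_{i1}+F\delta_{i2})H_{i,n}$ into gain and loss parts (with $T'_{i,n},H_{i,n}\geq 0$) and setting up the fixed point in the corresponding Duhamel form \pef{Positivity4}, so $(\lambda_n,F_n)\in X_+$.

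Second, I would assemble the uniform-in-$n$ bounds needed for compactness. Energy conservation gives $\lambda_n(t)\in[0,E_0/(2\varepsilon_0+E_0)]$ and $\Arrowvert F_n(t)\Arrowvert_{L^1_2}\leq E_0$; Lemma \ref{lemexistence} promotes this, for $n\geq M$, to a uniform bound $\sup_{t\geq 0}\Arrowvert F_n(t)\Arrowvert_{L^1_4}\leq C$; and Lemmas \ref{lemcutoff2}--\ref{lemcutoff4} yield $\int F_n\log^+ F_n\,dv\leq C'$ uniformly. For $\lambda_n$, the bound $\arrowvert T_{1,n}(\lambda_n,F_n)\arrowvert\leq 2\overline{C}E_0$ from \pef{lip3} shows $\lambda_n$ is uniformly Lipschitz in $t$, so Arzelà--Ascoli extracts a subsequence $\lambda_{n_k}\to\lambda$ in $C([0,T])$ with $\lambda\in[0,1)$.

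Third, for $F_n$ I would invoke the Dunford--Pettis theorem: the $L^1_4$ bound gives both $L^1$-boundedness and tightness (since $\int_{|v|\geq R}F_n\,dv\leq C/(1+R^2)^2$), while the uniform bound on $\int F_n\log^+F_n$ supplies uniform integrability. Combined with time-equicontinuity in $L^1_2$ coming from the uniform bound on $\partial_t F_n$ in $L^1_2$ (since the $L^1_4$ bound makes $\mathbb{K}_{i,n}[F_n,\lambda_n]$ uniformly controlled in $L^1_2$ via \pef{equicontinuity}), one extracts a common subsequence with $F_{n_k}(t)\rightharpoonup F(t)$ in $L^1(\mathbb{R}^3)$ for every $t\in[0,T]$, with $F\in C([0,T],L^1_2)$ and $F\geq 0$. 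Using tightness in $L^1_2$, this upgrades to $\int F_{n_k}\varphi\,dv\to\int F\varphi\,dv$ for any $\varphi$ with $\varphi/(1+\arrowvert v\arrowvert^2)\in L^\infty$, which is the class of test-functions the weak formulation requires. By lower semicontinuity, $F\in L^\infty([0,T],L^1_4)$.

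The main obstacle is passing to the limit inside the quadratic collision integrals with cut-off kernels $B_{n_k}$. The key estimate is
\begin{equation*}
\int_{\mathbb{R}^3}\int_{\mathbb{R}^3}(B-B_{n_k})F_{n_k}(v)F_{n_k}(w)\,dv\,dw\longrightarrow 0,
\end{equation*}
which, on the region $\{|v-w|>R_{n_k}\}$ where $B\neq B_{n_k}$, is controlled by splitting $\rchi_{|v-w|>R_{n_k}}\leq\rchi_{|v|\geq R_{n_k}^1}+\rchi_{|w|\geq R_{n_k}^1}$ and using the uniform $L^1_4$ bound to get an $O(1/(1+R_{n_k}^2))$ remainder, exactly as in \pef{existence5}. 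Together with the weak $L^1$ convergence of $F_{n_k}(v)F_{n_k}(w)\rightharpoonup F(v)F(w)$ (a consequence of the weak $L^1$ convergence of $F_{n_k}$ and the tightness/uniform integrability of the product) and the uniform convergence of $\lambda_{n_k}$, one passes to the limit in each term of the weak formulation and concludes that $(\lambda,F)$ solves \pef{newsystem1}. Arbitrariness of $T>0$ then gives a global weak solution.
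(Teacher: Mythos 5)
Your proposal follows essentially the same route as the paper's own proof: construct the cut-off approximants $(\lambda_n,F_n)$ via the Lipschitz/Duhamel fixed-point (Lemma \ref{lemcutoff5}, the gain-loss split, and conservation of energy for global existence and positivity), use energy/entropy/$L^1_4$ bounds (Lemmas \ref{lemcutoff1}--\ref{lemcutoff4} and \ref{lemexistence}) together with Arzel\`a--Ascoli for $\lambda_n$ and Dunford--Pettis plus time-equicontinuity for $F_n$, and then pass to the limit by controlling $(B-B_{n_k})F_{n_k}F_{n_k}$ via the $L^1_4$ tightness. The only place worth tightening is the remark that $F_{n_k}(v)F_{n_k}(w)\rightharpoonup F(v)F(w)$ ``is a consequence of weak $L^1$ convergence and uniform integrability of the product'' --- tensor products of weakly convergent $L^1$ sequences need not converge weakly to the tensor product of the limits, and the correct justification (as the paper implicitly uses) is that \pef{existence3} together with the uniform moment bounds gives convergence of the relevant velocity averages of the collision integrand against each fixed test function.
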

\subsection{Uniqueness of weak solutions}
Now that we have shown the existence of weak solutions, we can show that they are unique.
\begin{theorem}\label{infinityuniqueness}
Under the same assumption as in Theorem \ref{infinityexistence1} there is at most one weak solution $ \left(\lambda, F\right) $ for the same initial value problem with $ F\in L^{\infty}\left([0,T], L^1_4\left(\mathbb{R}^3\right)\right) $.

\begin{proof}
Let $ T, (\lambda_0, F_0) $ be as in the assumption. Let us assume that $ (\lambda, F) $ and $ (\mu, G) $ are two weak solutions for the same initial value $ (\lambda_0,F_0) $. Then using the computation in Lemma \ref{lemcutoff5} we see 
\begin{equation*}
\arrowvert \lambda(t)-\mu(t)\arrowvert\leq\left\arrowvert\int_{0}^{t} d\tau\; \left(T_1\left[\lambda, F\right]-T_1\left[\mu, G\right]\right)\right\arrowvert\leq\max(C_1,C_2)\int_{0}^t\left(\arrowvert\lambda-\mu\arrowvert+\Arrowvert F-G\Arrowvert_{L_2^1}\right).
\end{equation*}

We claim that there exists a constant $ C_T>0 $ depending on $ T $ and on $ \sup_{t\leq T}\left(\Arrowvert F\Arrowvert_{L_4^1},\Arrowvert G\Arrowvert_{L_4^1}\right) $ such that 
\begin{equation}\label{uniqueness1}
\int_{\mathbb{R}^3}dv\;(1+\arrowvert v\arrowvert^2)\text{sign}(F-G)\left(\mathbb{K}_1\left[F,\lambda\right]+\mathbb{K}_2\left[F,\lambda\right]-\mathbb{K}_1\left[G,\lambda\right]-\mathbb{K}_2\left[G,\lambda\right]\right)\leq \tilde{C}_T\Arrowvert F-G\Arrowvert_{L_2^1}.
\end{equation}
Indeed, defining $ \phi(v)= \text{sign} (F-G)(v)(1+\arrowvert v\arrowvert^2) $ we can do the following computation, first for the elastic terms and the for the nonelastic one
\begin{equation}\label{uniq1}
\begin{split}
&\int_{\mathbb{R}^3}dv\;(1+\arrowvert v\arrowvert^2)\text{sign}(F-G)\left(\mathbb{K}_1^{elastic}\left[F,\lambda\right]+\mathbb{K}_2^{elastic}\left[F,\lambda\right]-\mathbb{K}_1^{elastic}\left[G,\lambda\right]-\mathbb{K}_2^{elastic}\left[G,\lambda\right]\right)\\
=&\frac{(1+\lambda)^2}{2}\int_{\mathbb{R}^3}dv_1\int_{\mathbb{R}^3}dv_2\int_{\mathbb{S}^2}d\omega\; \arrowvert v_1-v_2\arrowvert \left(F(v_1)F(v_2)-G(v_1)G(v_2)\right)\left(\phi(v_3)+\phi(v_4)-\phi(v_1)-\phi(v_2)\right)\\
\leq&4\int_{\mathbb{R}^3}dv_1\int_{\mathbb{R}^3}dv_2\int_{\mathbb{S}^2}d\omega\;(1+\arrowvert v_1\arrowvert^2)^{1/2}(1+\arrowvert v_2\arrowvert^2)^{1/2} \arrowvert F(v_1)-G(v_1)\arrowvert G(v_2)(1+\arrowvert v_2\arrowvert^2)\\
&+4\int_{\mathbb{R}^3}dv_1\int_{\mathbb{R}^3}dv_2\int_{\mathbb{S}^2}d\omega\;(1+\arrowvert v_1\arrowvert^2)^{1/2}(1+\arrowvert v_2\arrowvert^2)^{1/2}  F(v_1)\arrowvert F(v_2)-G(v_2)\arrowvert(1+\arrowvert v_1\arrowvert^2)\\
\leq& 8  \sup_{t\leq T}\left(\Arrowvert F\Arrowvert_{L_4^1},\Arrowvert G\Arrowvert_{L_4^1}\right)\Arrowvert F-G\Arrowvert_{L_2^1},
\end{split}
\end{equation}
where we used in addition to the estimate estimate for the elastic kernel \pef{usual estimate 3} the relation \pef{equality} we already saw in Lemma \ref{lemcutoff5}. We apply also the triangle inequality, the definition of sign function and the relation between the elastic velocities.
In a very similar way we can bound the nonelastic terms applying this time the estimate for the nonelastic kernels and the relation between the nonelastic velocities. Using the weak formulation we see again
\begin{equation}\label{uniq2}
\begin{split}
&\int_{\mathbb{R}^3}dv\;(1+\arrowvert v\arrowvert^2)\text{sign}(F-G)\left(\mathbb{K}_1^{nonelastic}\left[F,\lambda\right]+\mathbb{K}_2^{nonelastic}\left[F,\lambda\right]-\mathbb{K}_1^{nonelastic}\left[G,\lambda\right]-\mathbb{K}_2^{nonelastic}\left[G,\lambda\right]\right)\\
\leq& C_0(1+2\sqrt{\varepsilon_0})\int_{\mathbb{R}^3}\int_{\mathbb{R}^3}d\ov_1d\ov_2\;\arrowvert F(\ov_1)-G(\ov_1)\arrowvert G(\ov_2)(1+\arrowvert \ov_1\arrowvert^2)^{1/2}(1+\arrowvert \ov_2\arrowvert^2)^{3/2}\\
&+C_0(1+2\sqrt{\varepsilon_0})\int_{\mathbb{R}^3}\int_{\mathbb{R}^3}d\ov_1d\ov_2\;F(\ov_1)\arrowvert F(\ov_2)-G(\ov_2)\arrowvert(1+\arrowvert\ov_1\arrowvert^2)^{3/2}(1+\arrowvert \ov_2\arrowvert^2)^{1/2}\\
&+C_0(1+2\sqrt{\varepsilon_0})(1+\varepsilon_0)\int_{\mathbb{R}^3}\int_{\mathbb{R}^3}d\ov_3d\ov_4\;\arrowvert  F(\ov_3)- G(\ov_3)\arrowvert F(\ov_4)\left(1+\arrowvert\ov_3\arrowvert^2\right)^{1/2}\left(1+\arrowvert\ov_4\arrowvert^2\right)^{3/2}\\
&+C_0(1+2\sqrt{\varepsilon_0})(1+\varepsilon_0)\int_{\mathbb{R}^3}\int_{\mathbb{R}^3}d\ov_3d\ov_4\; G(\ov_3)\arrowvert F(\ov_4)-G(\ov_4)\arrowvert\left(1+\arrowvert\ov_3\arrowvert^2\right)^{3/2}\left(1+\arrowvert\ov_4\arrowvert^2\right)^{1/2}\\
&\leq 4C_0(1+2\sqrt{\varepsilon_0})(1+\varepsilon_0)\Arrowvert F-G\Arrowvert_{L_2^1}\;\sup_{t\leq T}\left(\Arrowvert\mathbb{F}\Arrowvert_{L_4^1},\Arrowvert\mathbb{G}\Arrowvert_{L_4^1}\right).\\\end{split}
\end{equation}
We also see that, since $ \sup_{t\leq T}\left(\Arrowvert F\Arrowvert_{L_4^1},\Arrowvert G\Arrowvert_{L_4^1}\right)<\infty $ in equation \pef{lip2} we can actually find a constant $ \tilde{C}_T $ which depends on the initial energy $ E(0) $ and on $ \sup_{t\leq T}\left(\Arrowvert F\Arrowvert_{L_4^1},\Arrowvert G\Arrowvert_{L_4^1}\right) $ such that 
\begin{equation}\label{uniqueness4}
IV\leq \tilde{C}_T \arrowvert\lambda-\mu\arrowvert.
\end{equation}
This is true because for any collision kernel $ B\in\{B_{el},\; B_{ne}^{12},\;B_{ne}^{34}\} $ we have the estimate \linebreak$ B(v,w)\leq K\left(1+\arrowvert v\arrowvert^2\right)^{1/2}\left(1+\arrowvert w\arrowvert^2\right)^{1/2} $ so that for the key estimates \pef{key1} and \pef{key2} we can use
\begin{equation}
\int_{\mathbb{R}^3}dv\int_{\mathbb{R}^3}dw\int_{\mathbb{S}^2}d\omega\;B(v,w)G(v)G(w)(1+\arrowvert v\arrowvert^2)\leq \Arrowvert G\Arrowvert_{L_2^1}\Arrowvert G\Arrowvert_{L_4^1}\leq E(0)\sup_{t\leq T}\left(\Arrowvert F\Arrowvert_{L_4^1},\Arrowvert G\Arrowvert_{L_4^1}\right).
\end{equation}
Thus, changing the first term of the third estimate $ III $ in equation \pef{lip11} with the above term \pef{uniqueness1} and using also the estimate \pef{uniqueness4} together with an approximating argument for $ \phi $ we see that
\begin{equation}\label{uniqueness2}
\begin{split}
&\partial_t\int_{\mathbb{R}^3}dv\;(1+\arrowvert v\arrowvert^2)\text{sign}(F-G) (F-G)\\
&=\partial_t\int_{\mathbb{R}^3}dv\;(1+\arrowvert v\arrowvert^2)\text{sign}(F-G)\left(T_2\left[\lambda, F\right]-T_2\left[\mu, G\right]\right)\leq\overline{C}_T\left(\arrowvert\lambda-\mu\arrowvert+\Arrowvert F-G\Arrowvert_{L_2^1}\right),
\end{split}
\end{equation}
where $ \overline{C}_T $ depends on the initial energy $E(0) $ and on $ \sup_{t\leq T}\left(\Arrowvert F\Arrowvert_{L_4^1},\Arrowvert G\Arrowvert_{L_4^1}\right) $. These results imply the existence of a constant $ C=\max(C_1,C_2,\overline{C}_T) $ such that
\begin{equation}\label{uniqueness3}
\left\Arrowvert\begin{pmatrix}\lambda\\F\end{pmatrix}-\begin{pmatrix}\mu \\G\end{pmatrix}\right\Arrowvert_X\leq\int_{0}^t d\tau\;C\left\Arrowvert\begin{pmatrix}\lambda \\F\end{pmatrix}-\begin{pmatrix}\mu \\G\end{pmatrix}\right\Arrowvert_X.
\end{equation} 
The integral form of Gr\"onwall's Lemma implies the claim.
\end{proof}
\end{theorem}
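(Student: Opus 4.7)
The plan is to derive a Grönwall inequality for the quantity
$$\Phi(t) := |\lambda(t)-\mu(t)| + \|F(t)-G(t)\|_{L^1_2}$$
and conclude that $\Phi \equiv 0$ from $\Phi(0)=0$. The two solutions $(\lambda,F)$ and $(\mu,G)$ share the initial datum $(\lambda_0,F_0)$, and by hypothesis both satisfy $F, G \in L^\infty([0,T], L^1_4(\mathbb{R}^3))$. The conservation of energy (which holds because of the $L^1_4$ control, via the approximation argument used for the existence theorem) bounds $\lambda, \mu$ away from $1$, and bounds $\|F\|_{L^1_2}, \|G\|_{L^1_2}$ uniformly in $t$.

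First I would estimate $|\lambda(t)-\mu(t)|$. By the integral form of the first equation, this reduces to bounding $|T_1(\lambda,F) - T_1(\mu,G)|$. The structure is exactly as in Lemma \ref{lemcutoff5}, with the only differences being (i) the kernels are no longer cut off, so in place of $\overline{C}_n, \tilde{C}_n$ one uses the hard-sphere bound \pef{usual estimate 3}, which costs a factor $(1+|v|^2)^{1/2}(1+|w|^2)^{1/2}$ and therefore demands an extra moment on one of the factors, and (ii) the terms corresponding to \pef{lip2} and \pef{lip8} need the $L^1_4$ bound to close. Combining those calculations, there is a constant $C_T$, depending only on $E_0$ and $\sup_{t\leq T}(\|F\|_{L^1_4},\|G\|_{L^1_4})$, with
$$|\lambda(t)-\mu(t)| \le C_T \int_0^t \Phi(\tau)\,d\tau.$$

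Next, and this is the main work, I would estimate $\|F(t)-G(t)\|_{L^1_2}$. The natural approach is to test the equation satisfied by $F-G$ against $\phi(v) = \mathrm{sign}(F-G)(1+|v|^2)$, after the usual approximation of the sign function by a smooth bounded truncation. The contribution of the collision operators has the form \pef{uniqueness1}, and the author's hint for the key estimates \pef{uniq1}–\pef{uniq2} is precisely the right one: using the standard identity \pef{equality} to rewrite the difference as terms of the form $(F(v_1)-G(v_1))G(v_2)$ plus $F(v_1)(F(v_2)-G(v_2))$, using the bound $|\phi(v_3)|+|\phi(v_4)|+|\phi(v_1)|+|\phi(v_2)| \le 4(1+|v_1|^2)(1+|v_2|^2)$ on the test–function gain, and using $|v_1-v_2|\le (1+|v_1|^2)^{1/2}(1+|v_2|^2)^{1/2}$ on the kernel. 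The product of weights is at worst $(1+|v_1|^2)^{1/2}(1+|v_2|^2)^{3/2}$ or its symmetric counterpart, which is exactly $L^1_2 \otimes L^1_4$ — and this is why the hypothesis $F,G \in L^1_4$ is essential. An entirely analogous computation handles the nonelastic terms with the kernel bound \pef{usual estimate} and the inequality $(1+|\bar v_3|^2)(1+|\bar v_4|^2) \le (1+\varepsilon_0)(1+|\bar v_1|^2)(1+|\bar v_2|^2)$ coming from energy balance. The nonlinear prefactors $1/(1+\lambda)$ and $(1-\lambda)^2/[(1+\lambda)+(1-\lambda)^2\int F]$ are smooth Lipschitz functions of $(\lambda,F)$ on the relevant range, so their differences generate terms of the form shown in $II$ and $IV$ of \pef{lip6}, bounded by $C_T(|\lambda-\mu|+\|F-G\|_{L^1_2})$. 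Altogether,
$$\|F(t)-G(t)\|_{L^1_2} \le C_T \int_0^t \Phi(\tau)\,d\tau.$$

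Adding the two estimates gives $\Phi(t) \le 2C_T \int_0^t \Phi(\tau)\,d\tau$, and the integral form of Grönwall's lemma yields $\Phi \equiv 0$ on $[0,T]$, proving uniqueness. The main obstacle, and the reason the $L^1_4$ hypothesis cannot be dropped with this method, is closing the Lipschitz estimate in $L^1_2$ without a cutoff: the hard-sphere kernel costs one weight factor of $1/2$ on each argument, the test function $(1+|v|^2)$ costs an additional weight, and the identity \pef{equality} forces one of the two $F,G$ to carry the full combined weight. Hence the uniqueness class is exactly the one in which the existence theorem was proved, which justifies the statement.
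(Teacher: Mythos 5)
Your proposal is correct and follows essentially the same route as the paper: you define the Grönwall functional $|\lambda-\mu|+\|F-G\|_{L^1_2}$, estimate the $\lambda$-difference via the Lipschitz computations of Lemma~\ref{lemcutoff5}, estimate the $F$-difference by testing with $\mathrm{sign}(F-G)(1+|v|^2)$, use the identity \pef{equality} to isolate factors of $F-G$, and exploit the $L^1_4$ bound to absorb the extra weight generated by the hard-sphere kernel and the test function. The observation that the combined weight is at worst $L^1_2\otimes L^1_4$ is exactly the mechanism behind \pef{uniq1}--\pef{uniq2}, and the Lipschitz treatment of the nonlinear prefactors $1/(1+\lambda)$ and $(1-\lambda)^2/[(1+\lambda)+(1-\lambda)^2\int F]$ is what the paper handles via estimates~\pef{lip9}, \pef{lip11}--\pef{lip12}.
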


\section{The rigorous proof of the derivation of the kinetic equation in the fast radiation limit}
In Section 2 we formally developed the equation \pef{newsystem1}, which describes the behavior of the system immediately after the initial time. In Section 3 a well-posedness theory was proven. In this new section we are ready to prove Theorem \ref{thmrigorous}. We want to prove that the solutions $ (\ol,\oF) $ of the formal derived system \pef{newsystem1} are the limit of the solutions of \pef{epsequation} when $ \eps $ goes to zero. The theorem will be stated and proven in Section 4.3.

For the classical Chapman-Enskog expansion of the Boltzmann equation around the equilibrium, i.e. the Maxwellian, a good convergence result was already shown. In this case the Chapman-Enskog expansion leads in the hydrodynamic limit to the Euler equation (or to the Navier-Stokes equation). For example in \cite{caflisch} it is proven that the smooth solution to the $ \eps $-Boltzmann equation converges for small times to the Maxwellian constructed with the smooth solutions to the corresponding Euler equation. Similar is also the result in \cite{saint-raymond}, which deals with the more general notion of renormalized solutions.

In our case we are dealing with a slightly different situation. The manifold we considered for the generalized Chapman-Enskog expansion is infinitely dimensional. Also the derived kinetic system \pef{newsystem1} is different from the Euler Equation. We will proceed therefore in a similar approach as in the literature but with different tools. We will consider an initial data $ \FF_0 $ in a small neighborhood of $ \mathcal{M}_+ $ and the solution $ \FF^\eps $ of \pef{epsequation} to that initial value. The most important step is that in the neighborhood of the manifold of the steady states $ \mathcal{M}_+ $ we can decompose each vector as the sum of an element in $ \mathcal{M}_+ $ and a remainder. So we will prove that $ \FF^\eps $ converges to $ \oFF $, for the latter being given by the solution of the derived system \pef{newsystem1} with the initial value being the projection of $ \FF_0 $ into $ \mathcal{M}_+ $.
\subsection{The decomposition lemma}
We start with the decomposition lemma, to this end we define a new space 
\begin{equation*}
\mathcal{Y}=\left\{ (F,\alpha,\theta,\lambda)\in L^1_2\left(\mathbb{R}^3\right)\times L^1_2\left(\mathbb{R}^3\right)\times L^1\left(\mathbb{S}^2\right)\times \mathbb{R}\right\}.
\end{equation*}

\begin{lemma}\label{parametrization}
Let $ \FF\in X $ with $ \FF>0 $. Let $ \delta>0 $. Assume there exists some $ \FF_0\in\mathcal{M}_+ $ such that $ \Arrowvert \FF-\FF_0\Arrowvert_\mathcal{X}<\eps$ for some  $\eps>0$ small enough. Then there exist $ \oFF\in\mathcal{M} $ and $ W=\begin{pmatrix}\alpha\\-\alpha\\\theta\end{pmatrix}\in\mathcal{X} $ with $\int_{\mathbb{R}^3}\alpha=\int_{\mathbb{S}^2}\theta$ such that $ \FF=\oFF+W $, $ \Arrowvert W\Arrowvert_\mathcal{X}< \delta $ and $ \ol\in(0,1) $. Moreover, $ \oFF\in\mathcal{M}_+ $.
\begin{proof}
This is an application of the Implicit Function Theorem for Banach Spaces. Let \linebreak$ A=L^1_2\left(\mathbb{R}^3\right)\times L^1_2\left(\mathbb{R}^3\right)\times L^1\left(\mathbb{S}^2\right)\times(-\infty,1) \subset\mathcal{Y} $, which is an open subset of $ \mathcal{Y} $.  We denote by $ H: \mathcal{X}\times A\to\mathcal{Y} $ the function given by
\begin{equation}\label{H}
H\begin{pmatrix}
\FF\\F\\\alpha\\\theta\\\lambda
\end{pmatrix}= \begin{pmatrix}F^1-F-\alpha\\F^2-\lambda F+\alpha\\Q-\frac{\lambda}{1-\lambda}-\theta\\\int_{\mathbb{R}^3}\alpha-\int_{\mathbb{S}^2}\theta	\end{pmatrix}.
\end{equation}
$ H $ is well-defined in $ \mathcal{Y} $. Moreover clearly $ H(\FF_0, 0, 0, \lambda_0)=0 $ by the definition of the manifold $ \mathcal{M}_+ $. By a simple computation using the Taylor expansion we calculate for $ h=(\overline{h},h_4,h_5,h_6,h_7)\in\mathcal{X}\times\mathcal{Y} $
\begin{equation}\label{frechet1}
H(\FF_0+\overline{h},F_0+h_4,h_5,h_6,\lambda+h_7)-H(\FF_0,F_0,0,0,\lambda_0)=\begin{pmatrix}\overline{h}_1-h_4-h_5\\\overline{h}_2-\lambda_0h_4+h_5-F_0h_7\\\overline{h}_3-\frac{1}{(1-\lambda_0)^2}\\\int_{\mathbb{R}^3}h_5-\int_{\mathbb{S}^2}h_6\end{pmatrix}+\begin{pmatrix}0\\-h_7h_4\\\mathcal{O}(h_7^2)\\0\end{pmatrix}.
\end{equation}
This implies that $ H $ is Frèchet differentiable in $ (\FF_0.F_0,0,0,\lambda_0) $. Denoting by $ D_\mathcal{Y}H(\FF_0,F_0,0,0,\lambda_0) $ the Frèchet derivative of $ H $ when the first entrance is constant $ \FF_0 $ we see that it is a bounded linear operator from $ \mathcal{Y} $ to $ \mathcal{Y} $ defined by
\begin{equation}\label{frechet2}
D_\mathcal{Y}H(\FF_0,F_0,0,0,\lambda_0)[(f,g,\omega,x)]= \begin{pmatrix} -f-g\\-\lambda_0g+g-F_0x\\-\frac{1}{(1-\lambda_0)^2}x-\omega\\\int_{\mathbb{R}^3}g-\int_{\mathbb{S}^2}\omega\end{pmatrix}
\end{equation}
This is also an invertible operator. The injectivity is easy to see: we assume that for some element in $ \mathcal{Y} $ we have $ \Arrowvert D_\mathcal{Y}H(\FF_0,F_0,0,0,\lambda_0)[(f,g,\omega,x)]\Arrowvert_\mathcal{Y}=0  $. This implies that each row of the vector defined in \pef{frechet2} must be equal $ 0 $ in $\mathcal{Y}$. Therefore we have first $ f=-g $ almost everywhere, which implies also $ f=\frac{F_0}{1+\lambda_0}x $ almost everywhere. From the relation $ \frac{1}{(1-\lambda_0)^2}x=-\omega $ for almost every $ n\in\mathbb{S}^2 $ we conclude, since $ x\in\mathbb{R} $, that also $ \omega $ has to be constant almost everywhere. Hence, putting the new definitions for $ g=-\frac{F_0}{1+\lambda_0}x $ and the one for $ \omega $ into $ \int_{\mathbb{R}^3}g-\int_{\mathbb{S}^2}\omega=0 $ we conclude that $ x=0 $. This is because by assumption $ \frac{\int_{\mathbb{R}^3}F_0}{1+\lambda_0}>0 $ and $ \frac{1}{(1-\lambda_0)^2}>0 $ and because of the computation
\begin{equation}\label{frechet3}
0=\int_{\mathbb{R}^3}g-\int_{\mathbb{S}^2}\omega= -\frac{\int_{\mathbb{R}^3}F_0}{1+\lambda_0}x-\frac{1}{(1-\lambda_0)^2}x.
\end{equation}
Since we have proved that $ x=0 $, this implies $ \omega=0 $ almost everywhere and also $ g=f=0 $ almost everywhere. Thus, the claimed injectivity holds true.

For the surjectivity we have to do a similar computation. Let $ \left(F,\;\alpha,\;\theta,\;\lambda\right)\in\mathcal{Y} $. We want to find a vector $ (f,g,\omega,x)\in\mathcal{Y} $ such that $\left\Arrowvert \begin{pmatrix} -f-g\\-\lambda_0g+g-F_0x\\-\frac{1}{(1-\lambda_0)^2}x-\omega\\\int_{\mathbb{R}^3}g-\int_{\mathbb{S}^2}\omega\end{pmatrix}-\begin{pmatrix}F\\\alpha\\\theta\\\lambda\end{pmatrix}\right\Arrowvert_\mathcal{Y}=0 $. From this equation we have first
\begin{equation}\label{frechet4}
-g=F+f\;\;\;\text{a.e}
\end{equation}
and therefore also 
\begin{equation}\label{frechet5}
f=\frac{-\alpha-F_0x-F}{1+\lambda_0}\;\;\;\text{a.e}.
\end{equation}
From the equation $ -\frac{1}{(1-\lambda_0)^2}x-\omega=\theta $ for almost every $ n\in\mathbb{S}^2 $ we see that $ \omega+\theta= -\frac{1}{(1-\lambda_0)^2}x $ almost everywhere, which implies that $ \omega+\theta $ is constant. So we can conclude with the following computation
\begin{equation}\label{frechet6}
\lambda=\int_{\mathbb{R}^3}g-\int_{\mathbb{S}^2}\omega=\frac{\int_{\mathbb{R}^3}\alpha-\lambda_0F}{1+\lambda_0}-\int_{\mathbb{S}^2}\theta+x\left(\frac{\int_{\mathbb{R}^3}F_0}{1+\lambda_0}+\frac{1}{(1-\lambda_0)^2}\right).
\end{equation}
By the assumption on $ \FF_0 $ the coefficient of $ x $ is positive, which implies the resolvibility of this equation. Having found the value of $ x $ we can find the desired vector $ (f,g,\omega,x)\in\mathcal{Y} $.

We have just proved the assumption for the implicit function theorem. Therefore there exist open neighborhoods $ V\subset\mathcal{X} $ of $ \FF_0 $ and $ U\subseteq A\subset\mathcal{Y} $ of $ (F_0,0,0,\lambda_0) $ and a unique continuous function $ \Phi:V\to U $ such that $ \Phi(\FF_0)=(F_0,0,0,\lambda_0) $ and such that for every $ (\FF,F,\alpha,\theta,\lambda)\in V\times U $ we have $ H(\FF,F,\alpha,\theta,\lambda)=0 $ if and only if $ \Phi(\FF)=(F,\alpha, \theta,\lambda) $. This implies that every $ \FF\in V $ can be uniquely written as \linebreak$ \FF=\oFF+W $ for $ \oFF\in\mathcal{M} $ and $ W=(\alpha,-\alpha,\theta)\in\mathcal{X} $ with $ \int_{\mathbb{R}^3}\alpha=\int_{\mathbb{S}^2}\theta $. Now let us consider \linebreak$ \tilde{U}=U\cap\left(L^1_2(\mathbb{R}^3)\times B^{L^1_2}_{\delta/3}(0)\times B^{L^1}_{\delta/3}(0)\times (0,1)\right)$, which is still an open neighborhood of $ (F_0,0,0,\lambda_0) $. Here we denote by $ B^Z_{r}(z_0) $ the ball of radius $ r $ and center $ z_0\in Z $ with respect to the norm of the Banach space $ Z $. Then taking $ \eps>0 $ small enough such that
$ B^\mathcal{X}_\eps\left(\FF_0\right)\subset \Phi^{-1}\left(\tilde U\right)\subset V $ we can conclude the lemma. By the continuity of $ \Phi $ the preimage of an open set is still an open set, in this case also a neighborhood of $ \FF_0 $.	

Now that we have the decomposition, it is not difficult to see that $ \oFF>0 $. The assumption that $ \FF>0 $ implies that $ \alpha>0 $ on the set where $ F\leq 0 $.  We already have seen that $ \ol\in(0,1) $. Hence, on the set where $ F\leq 0 $ we obtain $ \ol\,\oF-\alpha<0 $. But this is possible only on a set of measure zero. Therefore we conclude the claim.
\end{proof}
\end{lemma}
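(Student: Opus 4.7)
The natural approach is the implicit function theorem in Banach spaces applied to the map encoding the desired decomposition. Introduce $\mathcal{Y}:=L^1_2(\mathbb{R}^3)\times L^1_2(\mathbb{R}^3)\times L^1(\mathbb{S}^2)\times\mathbb{R}$ and define $H:\mathcal{X}\times\bigl(L^1_2(\mathbb{R}^3)\times L^1_2(\mathbb{R}^3)\times L^1(\mathbb{S}^2)\times(-\infty,1)\bigr)\to\mathcal{Y}$ by
\begin{equation*}
H(\FF;F,\alpha,\theta,\lambda)=\begin{pmatrix} F^1-F-\alpha \\ F^2-\lambda F+\alpha \\ Q-\tfrac{\lambda}{1-\lambda}-\theta \\ \int_{\mathbb{R}^3}\alpha-\int_{\mathbb{S}^2}\theta \end{pmatrix}.
\end{equation*}
Zeros of $H(\FF;\cdot)$ correspond precisely to decompositions $\FF=\oFF+W$ with $\oFF\in\mathcal{M}$ parametrized by $(F,\lambda)$ and $W=(\alpha,-\alpha,\theta)$ of the orthogonal form \pef{orthogonalspace}; by construction $H(\FF_0;F_0,0,0,\lambda_0)=0$.

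Next I would check Fréchet differentiability in the parameter variables and compute the partial derivative $L:=D_{(F,\alpha,\theta,\lambda)}H$ at $(\FF_0;F_0,0,0,\lambda_0)$, obtaining
\begin{equation*}
L(f,g,\omega,x)=\begin{pmatrix} -f-g \\ -\lambda_0 f+g-F_0\, x \\ -\omega-\tfrac{x}{(1-\lambda_0)^2} \\ \int_{\mathbb{R}^3}g-\int_{\mathbb{S}^2}\omega \end{pmatrix}.
\end{equation*}
The only nonlinearities in the parameters are $\lambda F$ and $\lambda/(1-\lambda)$, both smooth at $(F_0,\lambda_0)$ since $\lambda_0<1$, so $L$ is a well-defined bounded operator on $\mathcal{Y}$ and the Taylor remainder is negligible in norm. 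The central step is then to show that $L$ is an isomorphism: eliminating $f$, $g$, and $\omega$ from the first three rows in favor of the scalar $x$ and substituting into the last row reduces the inversion to a single scalar equation for $x$ whose coefficient is
\begin{equation*}
\frac{\int_{\mathbb{R}^3}F_0}{1+\lambda_0}+\frac{1}{(1-\lambda_0)^2},
\end{equation*}
which is strictly positive because $F_0\ge 0$ has positive mass and $\lambda_0\in[0,1)$. I expect this scalar nondegeneracy, which encodes the transversality between $\mathcal{M}$ and the space of orthogonal perturbations, to be the main delicate point of the argument.

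With $L$ invertible, the implicit function theorem supplies neighborhoods $V\subset\mathcal{X}$ of $\FF_0$ and $U\subset\mathcal{Y}$ of $(F_0,0,0,\lambda_0)$ together with a continuous solution map $\Phi:V\to U$ such that $H(\FF;\Phi(\FF))=0$. Shrinking $V$ to a ball $B^{\mathcal{X}}_{\eps}(\FF_0)\subset\Phi^{-1}\bigl(U\cap(L^1_2(\mathbb{R}^3)\times B^{L^1_2}_{\delta/3}(0)\times B^{L^1}_{\delta/3}(0)\times(0,1))\bigr)$ ensures $\Arrowvert W\Arrowvert_{\mathcal{X}}<\delta$ and $\ol\in(0,1)$. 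Finally, $\oFF\in\mathcal{M}_+$ follows pointwise: on any measurable set where $\oF(v)<0$, the relation $F^1(v)=\oF(v)+\alpha(v)>0$ forces $\alpha(v)>0$ there, so $F^2(v)=\ol\,\oF(v)-\alpha(v)<0$, contradicting $\FF>0$.
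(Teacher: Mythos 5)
Your proof is correct and follows essentially the same route as the paper: you set up the same map $H$, invoke the implicit function theorem via the same invertibility computation (your formula $-\lambda_0 f+g-F_0x$ for the second component of the partial derivative is in fact the correct one — the paper has a typo writing $-\lambda_0 g+g-F_0 x$ there), and you conclude $\oFF\in\mathcal{M}_+$ by the same pointwise contradiction on the set where $\oF$ would be negative. The identification of the positive scalar $\frac{\int F_0}{1+\lambda_0}+\frac{1}{(1-\lambda_0)^2}$ as the transversality coefficient is exactly the key nondegeneracy in the paper's argument.
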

\bigskip
This Lemma can be generalized also for continuous functions with value in the Banach Space $\mathcal{X}$.
\begin{corollary}\label{parametrization2}
Let $ \FF\in C\left([0,T], \mathcal{X}\right) $ with $ \FF>0 $. Let $ \delta>0 $. Assume there exists some $ \FF_0\in\mathcal{M}_+ $ such that $ \Arrowvert \FF-\FF_0\Arrowvert_\mathcal{X}<\eps$ for all $ t\in[0,T] $ for some  $\eps>0$ small enough. Then there exist $ \oFF\in C\left([0,T], \mathcal{X}\right) $ with $ \oFF(t)\in\mathcal{M} $ and $ W=\begin{pmatrix}\alpha\\-\alpha\\\theta\end{pmatrix}\in C\left([0,T], \mathcal{X}\right) $ with $\int_{\mathbb{R}^3}\alpha=\int_{\mathbb{S}^2}\theta$ such that $ \FF=\oFF+W $, $ \sup_{t\leq T}\Arrowvert W\Arrowvert_\mathcal{X}< \delta $ and $ \ol(t)\in(0,1) $ for all $ t\in[0,T] $. Moreover $ \oFF>0 $.
\begin{proof}
The proof is analogous as the one of Lemma \ref{parametrization}. Note that here $ \FF_0 $ is constant in time. In this case we consider the open set $ A=C\left([0,T],L^1_2\left(\mathbb{R}^3\right)\times L^1_2\left(\mathbb{R}^3\right)\times L^1\left(\mathbb{S}^2\right)\times(-\infty,1) \right)\subset C\left([0,T], \mathcal{Y}\right)  $ and the function $ H:C\left([0,T], \mathcal{X}\right)\times A\to C\left([0,T], \mathcal{Y}\right) $ is defined as before in \pef{H}. It is well-defined and Frèchet differentiable at $ (\FF_0,F_0,0,0,\lambda_0) $ and as in the Lemma \ref{parametrization} its derivative is a bounded invertible operator. By the Implicit Function Theorem we have again the existence of open neighborhoods $ V\subset C\left([0,T], \mathcal{X}\right) $ of $ \FF_0 $ and $ U\subseteq A\subset C\left([0,T], \mathcal{Y}\right) $ of $ (F_0,0,0,\lambda_0) $ and of a unique continuous function $ \Phi:V\to U $ with the same property as before. Defining now  $ \tilde{U}=U\cap\left(C\left([0,T],L^1_2(\mathbb{R}^3)\right)\times B^{C\left([0,T],L^1_2\right)}_{\delta/3}(0)\times B^{C\left([0,T],L^1\right)}_{\delta/3}(0)\times B^{C\left([0,T],(-\infty,1)\right)}_{1/2}(1/2)\right)$ we conclude as in Lemma \ref{parametrization}.
\end{proof}
\end{corollary}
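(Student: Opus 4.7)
The plan is to mimic the proof of Lemma \ref{parametrization} verbatim, but in the Banach space of continuous curves instead of in $\mathcal{X}$. The main point is that everything in the previous proof was linear and continuous in $\FF$, so it lifts pointwise to functions of $t$.

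First I would set the ambient spaces: work with $C([0,T],\mathcal{X})$ and $C([0,T],\mathcal{Y})$, and define the map $H : C([0,T],\mathcal{X}) \times A \to C([0,T],\mathcal{Y})$ by the same formula \pef{H}, applied pointwise in $t$. Here $A = C([0,T], L^1_2(\mathbb{R}^3) \times L^1_2(\mathbb{R}^3) \times L^1(\mathbb{S}^2) \times (-\infty,1))$ is the natural open subset. Since $\FF_0$ and $\lambda_0$ are constant in $t$, and $F_0$ is constant in $t$, the point $(\FF_0, F_0, 0, 0, \lambda_0)$ lies in $C([0,T],\mathcal{X}) \times A$ and $H(\FF_0, F_0, 0, 0, \lambda_0) = 0$ identically in $t$.

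Next I would check Fréchet differentiability at this base point. The expansion \pef{frechet1} holds pointwise in $t$ with the same linear part $D_\mathcal{Y}H$ as in \pef{frechet2} and a remainder of order $\mathcal{O}(\Arrowvert h\Arrowvert^2)$ uniformly in $t$, since the coefficients $\FF_0,F_0,\lambda_0$ are constant. Hence $H$ is Fréchet differentiable as a map between the corresponding function spaces and its partial derivative with respect to the $\mathcal{Y}$-variables is the operator that sends a curve $(f(t),g(t),\omega(t),x(t))$ to the curve defined by the formula \pef{frechet2} applied at each $t$. Invertibility follows from the pointwise invertibility established in Lemma \ref{parametrization}: the inverse operator was constructed explicitly through \pef{frechet4}--\pef{frechet6}, its formula depends only on the fixed quantities $F_0$ and $\lambda_0$, and it is a bounded linear operator $\mathcal{Y} \to \mathcal{Y}$. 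Applying it pointwise gives a bounded linear operator $C([0,T],\mathcal{Y}) \to C([0,T],\mathcal{Y})$ that inverts $D_\mathcal{Y}H$.

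Having verified the hypotheses, the Banach space Implicit Function Theorem produces open neighborhoods $V \subset C([0,T],\mathcal{X})$ of $\FF_0$ and $U \subset A$ of $(F_0,0,0,\lambda_0)$ and a unique continuous function $\Phi : V \to U$ with $\Phi(\FF_0) = (F_0,0,0,\lambda_0)$ and $H(\FF,\Phi(\FF)) = 0$. The last step is to shrink the target neighborhood so that the desired norm bounds hold: set
\begin{equation*}
\tilde U = U \cap \Bigl( C([0,T], L^1_2(\mathbb{R}^3)) \times B^{C([0,T],L^1_2)}_{\delta/3}(0) \times B^{C([0,T],L^1)}_{\delta/3}(0) \times B^{C([0,T],(-\infty,1))}_{1/2}(1/2) \Bigr),
\end{equation*}
which is still a neighborhood of $(F_0,0,0,\lambda_0)$. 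By continuity of $\Phi$, the preimage $\Phi^{-1}(\tilde U)$ is an open neighborhood of $\FF_0$ in $C([0,T],\mathcal{X})$, and one can choose $\eps > 0$ small enough so that the hypothesis $\sup_{t\leq T} \Arrowvert \FF - \FF_0\Arrowvert_\mathcal{X} < \eps$ forces $\FF \in \Phi^{-1}(\tilde U)$. This gives the decomposition $\FF = \oFF + W$ with the required structural and size constraints, $\oFF \in C([0,T],\mathcal{X})$ with $\oFF(t) \in \mathcal{M}$ and $\ol(t) \in (0,1)$, and $\sup_{t\leq T}\Arrowvert W\Arrowvert_\mathcal{X} < \delta$. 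Finally the positivity $\oFF > 0$ follows by the same pointwise-in-$t$ argument as at the end of Lemma \ref{parametrization}: if $\FF > 0$ and $F(t,v) \leq 0$ on a set of positive measure, then $\alpha(t,v) > 0$ there, but then $\ol(t) F(t,v) - \alpha(t,v) < 0$ would contradict the positivity of the second component of $\FF$. The principal, yet routine, obstacle is bookkeeping the uniform-in-$t$ nature of the remainder estimate and the continuity of $\Phi$ — no new analytic difficulty arises beyond what was handled in Lemma \ref{parametrization}.
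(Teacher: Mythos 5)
Your proposal is correct and follows essentially the same route as the paper: lift the map $H$ from \pef{H} pointwise in $t$ to $C([0,T],\mathcal{X})\times A \to C([0,T],\mathcal{Y})$, observe that Fréchet differentiability and invertibility of the partial derivative transfer from Lemma \ref{parametrization} because the coefficients $F_0,\lambda_0$ are $t$-independent (so the quadratic remainder in \pef{frechet1} and the explicit inverse from \pef{frechet4}--\pef{frechet6} are uniform in $t$), then apply the Banach-space Implicit Function Theorem and shrink to $\tilde U$. The paper's proof is just a more compressed statement of exactly this; no divergence in method or in the final positivity argument.
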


Lemma \ref{parametrization} and Corollary \ref{parametrization2} are fundamental for the proof of Theorem \ref{thmrigorous}, which is the goal of this section. Before moving to that we need some useful notation and technical results. 
\subsection{Some technical results}
\begin{definition}
We denote by $ \kappa_0=\int_{\mathbb{R}^3} \left(F_0^1+F_0^2\right) $ the initial mass of the particles. 
\end{definition}
\begin{remark}
	We recall that both the solutions to the kinetic equation \pef{kinsystem} as to the derived kinetic equation\pef{newsystem1} conserve the initial mass. Moreover they conserve also the initial energy $ E_0 $.
\end{remark}
We also recall, that solutions to the derived kinetic equation \pef{newsystem1} for initial values with bounded $ L^1_4 $-norm have also uniformly bounded $ L^1_4 $-norm. Moreover it can be proved, that strong solutions to the equation \pef{kinsystem} for initial values bounded in $ L^1_4 $ have also the fourth moment bounded, by a constant which only depends on $ E_0 $ and $ \kappa_0 $. This can be proven using the ODE's theory in general Banach spaces. We will not prove the whole well-posedness theory here. However, the most important step for the uniform boundedness of the $ L^1_4 $-norm is given by Lemma \ref{kappa_4}, which is proved for the homogeneous Boltzmann Equation in Lemma 6.5 of \cite{Bressan}. Before taking examine that proof carefully, we state an helpful result.
\begin{lemma}\label{Bressan}\textnormal{(\cite{Bressan}, page 17-19)}
	
	For each $ s>2 $ and any $ \lambda\in\left(\frac{4}{s+2},\;1\right)  $, one can find a constant $ \alpha $ large enough so that the following holds. If $ \arrowvert\xi\arrowvert\geq\alpha\left(1+\arrowvert\xi_*\right) $, then
	\begin{equation}\label{6.4}
	\frac{2}{\pi\arrowvert\xi-\xi_*\arrowvert^2}\int_{S_{\xi\xi_*}}\left(1+\arrowvert\xi'\arrowvert^2\right)^{s/2}d\sigma\leq \lambda\left[\left(1+\arrowvert\xi\arrowvert^2\right)^{s/2}+\left(1+\arrowvert\xi_*\arrowvert^2\right)^{s/2}\right],
	\end{equation}
	where $ d\sigma $ denotes the surface area on $ S_{\xi\xi_*} $, which is the sphere with diameter the segment, joining $ \xi $ with $ \xi_* $.	
\end{lemma}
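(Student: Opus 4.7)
The plan is to reduce the surface integral on $S_{\xi\xi_*}$ to an explicit one-variable integral, evaluate it in closed form, and extract the sharp constant $4/(s+2)$ from its asymptotic behaviour in the regime $|\xi|\geq\alpha(1+|\xi_*|)$ with $\alpha$ large.

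First I would parametrize $S_{\xi\xi_*}$, which has centre $(\xi+\xi_*)/2$ and radius $|\xi-\xi_*|/2$, by
\begin{equation*}
\xi' = \frac{\xi+\xi_*}{2} + \frac{|\xi-\xi_*|}{2}\hat n, \qquad \hat n\in\mathbb{S}^2.
\end{equation*}
With $d\sigma = (|\xi-\xi_*|/2)^2 \, d\hat n$, the prefactor $2/(\pi|\xi-\xi_*|^2)$ is tuned so that the left-hand side equals twice the spherical average of $(1+|\xi'|^2)^{s/2}$. The antipodal map $\hat n\mapsto -\hat n$ sends $\xi'$ to its elastic partner $\xi'_*:=\xi+\xi_*-\xi'$ and preserves the conservation identity $|\xi'|^2+|\xi'_*|^2=|\xi|^2+|\xi_*|^2$; using it, the left-hand side becomes the symmetric spherical average of $(1+|\xi'|^2)^{s/2}+(1+|\xi'_*|^2)^{s/2}$. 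Azimuthal symmetry around the axis $\xi+\xi_*$ then reduces the angular integral to the single variable $u=\cos\theta$, with
\begin{equation*}
|\xi'|^2 = a+bu,\quad |\xi'_*|^2 = a-bu,\quad a=\frac{|\xi|^2+|\xi_*|^2}{2},\quad b=\frac{|\xi-\xi_*||\xi+\xi_*|}{2}.
\end{equation*}

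Second, after the substitution $u\mapsto -u$ merges the two terms, the whole object collapses to the closed form
\begin{equation*}
\text{LHS} \;=\; \int_{-1}^{1}(1+a+bu)^{s/2}\,du \;=\; \frac{2}{b(s+2)}\left[(1+a+b)^{(s+2)/2}-(1+a-b)^{(s+2)/2}\right].
\end{equation*}
The key algebraic identity $a^2-b^2=(\xi\cdot\xi_*)^2$, together with $|\xi\cdot\xi_*|\leq|\xi||\xi_*|\leq|\xi|^2/\alpha$, forces the ratio $(1+a-b)/(1+a+b)$ to be $O(\alpha^{-2})$, while $b=\tfrac{1}{2}|\xi|^2(1+O(\alpha^{-2}))$ and $1+a+b=(1+|\xi|^2)(1+O(\alpha^{-1}))$. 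Plugging these into the closed form,
\begin{equation*}
\text{LHS} \;\leq\; \left(\frac{4}{s+2}+O(\alpha^{-1})\right)(1+|\xi|^2)^{s/2}.
\end{equation*}
Since by hypothesis $\lambda>4/(s+2)$, choosing $\alpha$ large enough gives $\text{LHS}\leq\lambda(1+|\xi|^2)^{s/2}$, which is automatically bounded by $\lambda[(1+|\xi|^2)^{s/2}+(1+|\xi_*|^2)^{s/2}]$.

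The step I expect to be the main obstacle is obtaining the \emph{sharp} constant $4/(s+2)$. A direct application of Hermite--Hadamard to the convex integrand $u\mapsto(1+a+bu)^{s/2}$ would only produce the constant $1$ in place of $4/(s+2)$; the improvement is tied to the genuine subtractive cancellation in the closed-form expression, governed by $b/a\to 1$ as $\alpha\to\infty$. Quantifying this cancellation with error terms uniform in the admissible range of $|\xi_*|$ is the delicate point, and is precisely what forces the strict inequality $\lambda>4/(s+2)$.
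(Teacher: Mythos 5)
The paper does not prove this lemma; it simply cites it as Lemma~6.4 of \cite{Bressan}, so there is no in-paper argument to compare against. Your proof is correct and self-contained, and it is essentially the standard argument behind Bressan's statement: parametrize $S_{\xi\xi_*}$ by $\xi'=\frac{\xi+\xi_*}{2}+\frac{|\xi-\xi_*|}{2}\hat n$, normalize so the left-hand side is twice the spherical average, reduce to a one-dimensional integral in $u=\cos\theta$ by axial symmetry, integrate exactly, and then estimate $b$ from below and $1+a+b$ from above using $|\xi_*|\leq|\xi|/\alpha$. All the intermediate identities check out: $|\xi'|^2=a+bu$ with $a=\frac{|\xi|^2+|\xi_*|^2}{2}$, $b=\frac{|\xi-\xi_*||\xi+\xi_*|}{2}$; $a^2-b^2=(\xi\cdot\xi_*)^2$; the closed form $\int_{-1}^1(1+a+bu)^{s/2}\,du=\frac{2}{b(s+2)}\bigl[(1+a+b)^{(s+2)/2}-(1+a-b)^{(s+2)/2}\bigr]$; and, since $|\xi-\xi_*|^2|\xi+\xi_*|^2=(|\xi|^2+|\xi_*|^2)^2-4(\xi\cdot\xi_*)^2$, indeed $b=\tfrac12|\xi|^2(1+O(\alpha^{-2}))$ even though $|\xi\pm\xi_*|$ individually only deviate from $|\xi|$ by $O(\alpha^{-1})$. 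Dropping the nonnegative subtracted term and using $|\xi|\geq\alpha$ then gives $\mathrm{LHS}\leq\bigl(\tfrac{4}{s+2}+O(\alpha^{-1})\bigr)(1+|\xi|^2)^{s/2}$, from which the claimed inequality follows for $\alpha$ large since $\lambda>\tfrac{4}{s+2}$. One small remark: your digression about Hermite--Hadamard, while a correct observation about why a coarse convexity bound loses the factor $\tfrac{4}{s+2}$, is not needed for the proof and could be omitted; the explicit antiderivative already delivers the sharp constant without any competing estimate.
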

Now we prove the desired result for the uniform $ L^1_4 $-boundedness of the sequence $ \FF^\eps $ of strong solution to the equation \pef{epsequation}.
\begin{lemma}\label{kappa_4}
Let $ \FF $ be a solution to the kinetic system \pef{kinsystem}. Assume $ \Arrowvert F\Arrowvert_{L_2^1}\leq \kappa_2 $, $\Arrowvert F\Arrowvert_{L^1}=\kappa_0 $ and that $ \Arrowvert F\Arrowvert_{L_4^1} $ is bounded. Then there exists a constant $ \overline{\kappa} $ depending on $ \kappa_2 $ and on $ \kappa_0 $ such that 
\begin{equation}\label{6.5}
\frac{d}{dt}\Arrowvert F\Arrowvert_{L_4^1}\leq 0\;\;\;\;\;\;\;\;\; whenever\;\;\; \Arrowvert F\Arrowvert_{L_4^1}\geq\overline{\kappa}. 
\end{equation} 
\begin{proof}
We follow almost one-to-one the proof of Lemma 6.5 of \cite{Bressan}. The only difference is that we are working with a system and also with some nonelastic interactions. Using equations \pef{mischler17} and \pef{mischler18} together with the relation of the nonelastic velocities as in \pef{nonelasticrel1} and estimating the kernel $ B_{ne}^{ij}(\ov,\ow)\leq C_{\eps_0}\left( \arrowvert\ov\arrowvert+\arrowvert\ow\arrowvert\right) $ we see that there exists some constant $ \tilde{C} $ such that defining $ \varphi^i(v)=\left(1+\arrowvert v\arrowvert^2\right)^2 $ the following is true that for every such $ \FF $
\begin{equation}\label{6.5bis}
\begin{split}
&\int_{\mathbb{R}^{12}} d\ov_1 d\ov_2 d\ov_3 d\ov_4 \delta(\ov_1+\ov_2-\ov_3-\ov_4)\delta(\arrowvert\ov_1\arrowvert^2+\arrowvert\ov_2\arrowvert^2-\arrowvert\ov_3\arrowvert^2-\arrowvert\ov_4\arrowvert^2-2\epsilon_0)W_{ne}(\ov_1,\ov_2;\ov_3,\ov_4)\\
&\left[\oF_1^1\oF_2^1\left(\varphi^2(\ov_3)+\varphi^1(\ov_4)-\varphi^1(\ov_1)-\varphi^1(\ov_2)\right)+\oF_3^2\oF_4^1\left(\varphi^1(\ov_1)+\varphi^1(\ov_2)-\varphi^2(\ov_3)-\varphi^1(\ov_4)\right)	\right]\\
&\leq \tilde{C} \Arrowvert F\Arrowvert_{L_2^1}\Arrowvert F\Arrowvert_{L_4^1}.
\end{split}
\end{equation}
Let us take $ \lambda $ and $ \alpha $ according to Lemma 6.4 of \cite{Bressan} as written in Lemma \ref{Bressan}. Without loss of generality we assume $ \alpha>1 $. Let $ \rho=\sqrt{\frac{2\kappa_2}{\kappa_0}} $. By the conservation of mass we obtain that
\begin{equation}\label{bressan2}
\int_{\arrowvert v\arrowvert\leq \rho}dv\;\left[F^1(v)+F^2(v)\right]>\frac{\kappa_0}{2}.
\end{equation}
Moreover let us define $ \Gamma':=\left\{(v_1,v_2):\arrowvert v_2\arrowvert\leq \rho,\;\arrowvert v\arrowvert\geq\alpha(1+\rho)\right\}\subset \mathbb{R}^3\times\mathbb{R}^3 $. We denote also $ \Gamma:=\mathbb{R}^3\times\mathbb{R}^3\setminus\Gamma' $. It is important to notice that on $ \Gamma' $ the following holds 
\begin{equation}\label{bressan4}
\arrowvert v_1-v_2\arrowvert\geq \arrowvert v_1\arrowvert-\arrowvert v_2\arrowvert=\left(\arrowvert v_1\arrowvert+1\right)-\left(\arrowvert v_2\arrowvert+1\right)\geq \left(1+\arrowvert v_1\arrowvert^2\right)^{1/2}-\left(\arrowvert v_2\arrowvert+1\right).
\end{equation}In this way we do the following computation following the same calculation as in \cite{Bressan}
\begin{equation}\label{6.5tris}
\begin{split}
\frac{d}{dt}\bigg\Arrowvert \frac{F}{\kappa_0}&\bigg\Arrowvert_{L_4^1}\leq \frac{1}{2\kappa_0}\sum_{i=1}^2\sum_{j=1}^2\int_{\mathbb{R}^3}dv_1\int_{\Gamma\cup\Gamma'}dv_2\int_{\mathbb{S}^2}d\omega\; \arrowvert v_1-v_2\arrowvert F_1^iF_2^j\left(\varphi^i(v_3)+\varphi^i(v_4)-\varphi^i(v_1)-\varphi^i(v_2)\right)\\
&+\frac{1}{\kappa_0}\int_{\mathbb{R}^{12}} d\ov_1 d\ov_2 d\ov_3 d\ov_4 \delta(\ov_1+\ov_2-\ov_3-\ov_4)\delta(\arrowvert\ov_1\arrowvert^2+\arrowvert\ov_2\arrowvert^2-\arrowvert\ov_3\arrowvert^2-\arrowvert\ov_4\arrowvert^2-2\epsilon_0)W_{ne}(\ov_1,\ov_2;\ov_3,\ov_4)\\
&\;\;\;\;\;\;\;\;\;\;\left[\oF_1^1\oF_2^1\left(\varphi^2(\ov_3)+\varphi^1(\ov_4)-\varphi^1(\ov_1)-\varphi^1(\ov_2)\right)+\oF_3^2\oF_4^1\left(\varphi^1(\ov_1)+\varphi^1(\ov_2)-\varphi^2(\ov_3)-\varphi^1(\ov_4)\right)	\right]\\
\leq& \frac{C}{\kappa_0} \Arrowvert F\Arrowvert_{L_2^1}\Arrowvert F\Arrowvert_{L_4^1}-\left(1-\lambda\right)\frac{1}{2}\sum_{i=1}^2\sum_{j=1}^2\int\int_{\Gamma'}dv_1dv_2 \;\arrowvert v_1-v_2\arrowvert \varphi (v_1)F_1^iF_2^j\\
\leq &\frac{C}{\kappa_0} \Arrowvert F\Arrowvert_{L_2^1}\Arrowvert F\Arrowvert_{L_4^1}-\frac{\left(1-\lambda\right)}{2\kappa_0}\sum_{i=1}^2\sum_{j=1}^2\int\int_{\Gamma'}dv_1dv_2 \left(1+\arrowvert v_1\arrowvert^2\right)^{1/2}\left(1+\arrowvert v_1\arrowvert^2\right)^2\;F_1^iF_2^j\\
&+\frac{\left(1-\lambda\right)}{2\kappa_0}\sum_{i=1}^2\sum_{j=1}^2\int\int_{\Gamma'}dv_1dv_2 \left(\arrowvert v_2\arrowvert+1\right)\left(1+\arrowvert v_1\arrowvert^2\right)^2\;F_1^iF_2^j\\
\leq&\frac{C_1}{\kappa_0}\Arrowvert F\Arrowvert_{L_2^1}\Arrowvert F\Arrowvert_{L_4^1}-\frac{\left(1-\lambda\right)}{2\kappa_0}\sum_{i=1}^2\sum_{j=1}^2\int\int_{\Gamma'}dv_1dv_2 \left(1+\arrowvert v_1\arrowvert^2\right)^{1/2}\left(1+\arrowvert v_1\arrowvert^2\right)^2\;F_1^iF_2^j\\
\leq&\frac{C_1}{\kappa_0}\Arrowvert F\Arrowvert_{L_2^1}\Arrowvert F\Arrowvert_{L_4^1}-\frac{\left(1-\lambda\right)}{4}\int_{\arrowvert v_1\arrowvert\geq \alpha(1+\rho)}dv_1 \left(1+\arrowvert v_1\arrowvert^2\right)^{5/2}\;\left(F_1^1+F_1^2\right).\\
\end{split}
\end{equation}
We used here also the weak formulation with $ \varphi^i(v)=\left(1+\arrowvert v\arrowvert^2\right)^2 $, the estimate \pef{6.5bis} for the nonelastic part and the Lemma \ref{Bressan} together with the fact, that the measure of $ S_{\xi\xi_*} $ is equal to $ \pi \arrowvert \xi-\xi_*\arrowvert^2 $. To conclude the Lemma we apply Jensen's inequality for the convex function $ h(x)=x^{5/4} $ and the probability measure $ \frac{\left(F^1(v)+F^2(v)\right)}{\kappa_0}dv $. Hence we see
\begin{equation}\label{bressan5}
\begin{split}
\kappa_0^{-1/4}\left(\int_{\mathbb{R}^3}dv\;\left(1+\arrowvert v\arrowvert^2\right)^{2}\left(F^1(v)+F^2(v)\right)\right)^{5/4}\leq&\int_{\mathbb{R}^3}dv\;\left(1+\arrowvert v\arrowvert^2\right)^{5/2}\left(F^1(v)+F^2(v)\right)\\
\leq&\int_{\arrowvert v\arrowvert\geq \alpha(1+\rho)}dv\;\left(1+\arrowvert v\arrowvert^2\right)^{5/2}\left(F^1(v)+F^2(v)\right)\\
&+\kappa_0(1+\alpha^2(1+\rho)^2)^{5/2}.
\end{split}
\end{equation}
Therefore putting all estimates together we obtain
\begin{equation}\label{bressan6}
\frac{d}{dt}\Arrowvert F\Arrowvert_{L_4^1}\leq C_1\Arrowvert F\Arrowvert_{L_2^1}\Arrowvert F\Arrowvert_{L_4^1}-\frac{\left(1-\lambda\right)\kappa_0^{3/4}}{4} \Arrowvert F\Arrowvert_{L_4^1}^{5/4}+\frac{\left(1-\lambda\right)\kappa_0^2}{4}(1+\alpha^2(1+\rho)^2)^{5/2}.
\end{equation}
The right hand side of this estimate is non-positive provided $ \Arrowvert F\Arrowvert_{L_4^1} $ is large enough. This conclude the Lemma.		
\end{proof}
\end{lemma}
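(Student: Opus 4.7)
\textbf{Proof plan for Lemma \ref{kappa_4}.} The plan is to differentiate $\Arrowvert F\Arrowvert_{L^1_4}$ in time by testing the weak formulation of \pef{kinsystem} against $\varphi^i(v)=(1+\arrowvert v\arrowvert^2)^2$, then to absorb a suitable top-order term via Jensen's inequality to obtain a differential inequality of the form $\frac{d}{dt}\Arrowvert F\Arrowvert_{L^1_4} \le C\Arrowvert F\Arrowvert_{L^1_4} - c\,\Arrowvert F\Arrowvert_{L^1_4}^{5/4} + C_0$ whose right-hand side is negative for $\Arrowvert F\Arrowvert_{L^1_4}$ large.

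First, I would treat the radiative contribution and the nonelastic collision contribution as error terms of order $\Arrowvert F\Arrowvert_{L^1_2}\Arrowvert F\Arrowvert_{L^1_4}$. Indeed, by \pef{nonelasticrel1} we have the bounds $\arrowvert \ov_3\arrowvert^4+\arrowvert \ov_4\arrowvert^4-\arrowvert \ov_1\arrowvert^4-\arrowvert \ov_2\arrowvert^4 \le 2\arrowvert \ov_1\arrowvert^2\arrowvert \ov_2\arrowvert^2$ and the symmetric inequality with an $\varepsilon_0$-dependent constant, used in \pef{mischler17}--\pef{mischler18}; together with the kernel estimate $B_{ne}^{ij}\lesssim \arrowvert\ov\arrowvert+\arrowvert\ow\arrowvert$, the whole nonelastic block in the weak formulation is controlled by $\tilde C\,\Arrowvert F\Arrowvert_{L^1_2}\Arrowvert F\Arrowvert_{L^1_4}$. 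The radiative terms $A_0 F^2 + B_0 Q(F^2-F^1)$, after testing against $\varphi^i$ and integrating in $n$ (with $Q$ finite from the energy budget), produce only an $L^1_4$-linear contribution and can be absorbed similarly.

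The decisive input is the elastic block, where I would exploit the sharp Povzner-type estimate of Lemma \ref{Bressan}. Fix $s=4$, choose $\lambda\in(2/3,1)$ and the corresponding $\alpha>1$ from that lemma; let $\rho:=\sqrt{2\kappa_2/\kappa_0}$ so that by Chebyshev and mass conservation $\int_{\arrowvert v\arrowvert\le\rho}(F^1+F^2)\,dv\ge \kappa_0/2$. Split the double integral in $(v_1,v_2)$ into $\Gamma':=\{\arrowvert v_2\arrowvert\le\rho,\ \arrowvert v_1\arrowvert\ge\alpha(1+\rho)\}$ and its complement. On the complement, the Povzner identity $\int_{\mathbb{S}^2}(\varphi(v_3)+\varphi(v_4)-\varphi(v_1)-\varphi(v_2))d\omega \le C\,(\arrowvert v_1\arrowvert^3\arrowvert v_2\arrowvert+\arrowvert v_1\arrowvert\arrowvert v_2\arrowvert^3)$ gives again a contribution bounded by $C\Arrowvert F\Arrowvert_{L^1_2}\Arrowvert F\Arrowvert_{L^1_4}$. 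On $\Gamma'$ the sharpened Povzner bound \pef{6.4} yields a gain: using $\arrowvert v_1-v_2\arrowvert \ge (1+\arrowvert v_1\arrowvert^2)^{1/2}-(1+\arrowvert v_2\arrowvert)$ on $\Gamma'$, after integrating in $v_2$ against the bulk of mass of size $\kappa_0/2$, one gets a strictly negative term of the form $-c(1-\lambda)\int_{\arrowvert v_1\arrowvert\ge \alpha(1+\rho)}(1+\arrowvert v_1\arrowvert^2)^{5/2}(F^1+F^2)\,dv_1$ modulo an $\Arrowvert F\Arrowvert_{L^1_4}$-linear error.

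Finally, to turn this $(5/2)$-moment control into control of $\Arrowvert F\Arrowvert_{L^1_4}$, I would apply Jensen's inequality with the convex function $h(x)=x^{5/4}$ against the probability measure $(F^1+F^2)dv/\kappa_0$, exactly as in \pef{bressan5}, which gives $\int(1+\arrowvert v\arrowvert^2)^{5/2}(F^1+F^2)\,dv \ge \kappa_0^{-1/4}\Arrowvert F\Arrowvert_{L^1_4}^{5/4} - \kappa_0(1+\alpha^2(1+\rho)^2)^{5/2}$ after splitting the region $\arrowvert v\arrowvert<\alpha(1+\rho)$ off. Inserting all pieces produces
\begin{equation*}
\frac{d}{dt}\Arrowvert F\Arrowvert_{L^1_4} \le C_1\kappa_2\,\Arrowvert F\Arrowvert_{L^1_4} - \tfrac{(1-\lambda)\kappa_0^{3/4}}{4}\,\Arrowvert F\Arrowvert_{L^1_4}^{5/4} + C_2(\kappa_0,\kappa_2),
\end{equation*}
and since the super-linear term dominates, there is $\bar\kappa=\bar\kappa(\kappa_0,\kappa_2)$ beyond which the right-hand side is $\le 0$. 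The main obstacle is the Povzner gain: one must carefully arrange the cutoff $\Gamma'$ and the choice of $\lambda<1$ in Lemma \ref{Bressan} so that the negative $|v_1|^4$-term in the sphere average survives after multiplication by $\arrowvert v_1-v_2\arrowvert$ (which degenerates when $v_1\approx v_2$); using that on $\Gamma'$ the relative velocity is comparable to $|v_1|$ is the crucial geometric input, and carefully tracking constants so they depend only on $\kappa_0,\kappa_2$ (not on $\Arrowvert F\Arrowvert_{L^1_4}$) is the bookkeeping step that needs attention.
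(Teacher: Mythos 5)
Your proposal follows essentially the same route as the paper: control the nonelastic block via the kernel bound and \pef{mischler17}--\pef{mischler18}, exploit the sharpened Povzner estimate of Lemma~\ref{Bressan} on the region $\Gamma'=\{\arrowvert v_2\arrowvert\le\rho,\ \arrowvert v_1\arrowvert\ge\alpha(1+\rho)\}$ to extract a negative $5/2$-moment, and convert it to a super-linear $\Arrowvert F\Arrowvert_{L^1_4}^{5/4}$ term via Jensen. The one small simplification you missed is that the radiative term cancels identically when you test both equations against the same $\varphi^i=(1+\arrowvert v\arrowvert^2)^2$ and sum $F^1+F^2$, so there is no need to absorb it as a linear error (and no need for a bound on $Q$); otherwise your argument matches the paper's proof.
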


With Lemma \ref{kappa_4} we can conclude that for a constant $ \kappa_4>\overline{\kappa} $ we have that for any $ t\geq 0 $ the $ L^1_4 $-norm of the solution is uniformly bounded $ \Arrowvert F\Arrowvert_{L_4^1}\leq \kappa_4 $. Note that this computation never used the radiative term of the equation \pef{compactequation}. This means that for an initial value with fourth moment bounded by $ \kappa_4 $ for every $ \eps>0 $ any strong solution $ \FF^\eps $ has the same bound $ \Arrowvert F^\eps\Arrowvert_{L_4^1}\leq \kappa_4 $. It is very important to notice that this constant does not depend on $ \eps $.
\subsection{The convergence theorem}
We are ready now for the rigorous prove of the derivation of the kinetic equation \pef{newsystem1}.
\begin{theorem}\label{thmrigorous1}
Let $ \eps>0 $. Let $ \FF_0\in\mathcal{M}_+ $, $ \FF_0>0 $ satisfying assumption \pef{Assumption} with mass $ \kappa_0>0 $, energy $ E_0:=E\left(\FF_0\right)\leq \frac{1}{2} $ and $ F_0\in L^1_4 $.

Let $ \delta_0:= \frac{2\eps_0\kappa_0}{64(2\eps_0+E_0)}>0 $ and let $ \delta<\delta_0 $ small enough. We define $ \tFF_0=\FF_0+W_0\in X $ such that $ \FF_0^\eps>0 $, $ W_0=\begin{pmatrix}\alpha_0\\-\alpha_0\\\theta_0\end{pmatrix}\in\mathcal{X} $ with $ \int_{\mathbb{R}^3}\alpha_0=\int_{\mathbb{S}^2}\theta_0 $ and $ \Arrowvert W_0\Arrowvert_\mathcal{X}<\delta $. 

Let $ \FF^\eps\in C^1\left([0,\infty),\mathcal{X}\right) $ be the solution to the equation \pef{epsequation} for the initial value $ \tFF_0 $. 

Let $ \oFF=\begin{pmatrix}\oF\\\ol\;\oF\\\frac{\ol}{1-\ol}\end{pmatrix}\in\mathcal{M}_+ $ with $ (\ol,\oF)\in C\left([0,\infty), [0,1)\times L^1_2\left(\mathbb{R}^3\right)\right) $ being the solution of the kinetic equation \pef{newsystem1} for the initial value $ \FF_0 $.

Then there exists some $ t_0>0 $ such that for all $ s>0 $ we have
\begin{equation}\label{thmrigorous2}
\lim\limits_{\eps\to 0}\sup_{s\leq t\leq t_0}\Arrowvert \FF^\eps-\oFF\Arrowvert_\mathcal{X}= 0.
\end{equation}
\begin{proof}
We proceed in several steps. We first decompose $ \FF^\eps $ according to Lemma \ref{parametrization}. Then using the equations for the functions in the manifold and for the rest term we estimate the norm of $ \FF^\eps-\oFF $.\\

\hspace{-15pt}\textbf{STEP 1: Decomposition}\\
By the continuity of $ \Arrowvert \FF^\eps-\FF_0\Arrowvert_\mathcal{X} $ and by the assumption there exists some $ t_1>0 $ such that 
\begin{equation}\label{deco1}
\Arrowvert \FF^\eps-\FF_0\Arrowvert_\mathcal{X}<\delta_1<\delta_0 \;\;\;\;\;\;\text{ for all }t\leq t_1
\end{equation}
for some $ \delta_1>\delta $ small enough. $ \delta_1 $ is chosen so that by Lemma \ref{parametrization} and its Corollary \ref{parametrization2} there exist unique vectors $ \oFF^\eps, W_\eps\in C\left([0,t_1], \mathcal{X}\right) $ such that $ \FF^\eps=\oFF^\eps+W_\eps $ and $ \oFF^\eps\in\mathcal{M} $ and $ W_\eps=(\alpha_\eps,-\alpha_\eps,\theta_\eps) $ with $ \int_{\mathbb{R}^3} \alpha_\eps=\int_{\mathbb{S}^2}\theta_\eps $. Moreover they satisfy 
\begin{equation}\label{deco2}
\Arrowvert W_\eps\Arrowvert_\mathcal{X}\leq \delta_0 \;\;\;\;\;\;\text{ and }\;\;\;\;\;\lambda_\eps\in(0,1).
\end{equation}
We can also take $ \delta_1 $ small enough so that by the uniqueness of the decomposition we have that \linebreak$ \oFF^\eps(0)=\oFF(0)=\FF_0 $. Therefore there exists by continuity also some $ 0<t_0\leq t_1 $ such that 
\begin{equation}\label{deco3}
\Arrowvert \oFF^\eps-\oFF\Arrowvert_{\mathcal{X}}<\frac{1}{16}\frac{2\eps_0}{2\eps_0+E_0}.
\end{equation}
\\
\textbf{Some important remarks.}\\
We recall that since both initial data $ \tFF_0 $ and $ \FF_0 $ are strictly positive, then also $ \FF^\eps $ and $ \oF $ are strictly positive for every $ t\geq0 $ and every $ \eps $. This implies that the mass of $ \FF^\eps $ is actually the mass of $ \oFF^\eps $
\begin{equation}\label{remthm}
\kappa_0=\int_{\mathbb{R}^3} F_\eps^1(v)+F^2_\eps(v)\;dv=\int_{\mathbb{R}^3} \oF^\eps(v)+\alpha_\eps(v)+\ol_\eps\oF^\eps(v)-\alpha_\eps(v)\;dv=(1+\ol_\eps)\int_{\mathbb{R}^3}\oF^\eps(v)\;dv,
\end{equation}
which by the positivity of $ \ol_\eps $ implies $ \int_{\mathbb{R}^3}\oF_\eps(v)dv\in\left(\frac{\kappa_0}{2},\kappa_0\right) $. In the same way we can see that the energy of $ \oFF^\eps $ is the same as the energy of $ \FF^\eps $ which is constant $ E_0 $. Moreover the $ L^1_2 $-norm of both $ \oFF^\eps $ and $ W_\eps $ is bounded
\begin{equation}\label{remthm2}
\begin{split}
\int_{\mathbb{R}^3} dv \arrowvert \oF^\eps\arrowvert(1+\arrowvert v\arrowvert^2)\leq& (1+\ol_\eps)\int_{\mathbb{R}^3} dv \arrowvert \oF^\eps\arrowvert(1+\arrowvert v\arrowvert^2)=\int_{\mathbb{R}^3} dv \arrowvert \oF^\eps+\alpha_\eps+\ol_\eps\oF^\eps-\alpha_\eps\arrowvert(1+\arrowvert v\arrowvert^2)\\
\leq&\int_{\mathbb{R}^3} dv \arrowvert \oF^\eps+\alpha_\eps\arrowvert+\arrowvert\ol_\eps\oF^\eps-\alpha_\eps\arrowvert(1+\arrowvert v\arrowvert^2)=\Arrowvert F^\eps\Arrowvert_{L_2^1}\leq E_0.
\end{split}
\end{equation}
Similarly we compute $ \int_{\mathbb{R}^3}dv \arrowvert\alpha_\eps\arrowvert(1+\arrowvert v\arrowvert^2)\leq\int_{\mathbb{R}^3}dv \arrowvert\oF^\eps+\alpha_\eps\arrowvert+\arrowvert \oF^\eps\arrowvert(1+\arrowvert v\arrowvert^2)\leq 2E_0  $.
For simplicity we will call $ \kappa_4 $ the maximum between the uniform bound of the $ L^1_4 $-norm of $ \FF^\eps $ and $ \oFF $. We know already that this is a constant independent of $ \eps $. With the very same calculation as in the derivation of equation \pef{remthm2} we see that $ \int_{\mathbb{R}^3} dv \arrowvert \oF^\eps\arrowvert(1+\arrowvert v\arrowvert^2)^2\leq \kappa_4 $ and therefore also $ \int_{\mathbb{R}^3}dv \arrowvert\alpha_\eps\arrowvert(1+\arrowvert v\arrowvert^2)^2\leq 2\kappa_4 $.\\

\hspace{-15pt}\textbf{STEP 2: The equation for the decomposition.}
\\
In order to estimate the norm of $ \FF^\eps-\oFF $ we start looking for the equation that $ \oFF^\eps $ and $ W_\eps $ satisfy. We define two operators. Let $ \FF\in\mathcal{M} $, then we define for $ W\in\mathcal{X} $
\begin{equation}\label{LF}
L_{\FF}(W)=\begin{pmatrix}\int_{\mathbb{S}^2} dn\; \left[W^2(v)+\frac{\lambda}{1-\lambda}\left(W^2(v)-W^1(v)\right)+W^3(n)(\lambda-1)F(v)\right]\\-\int_{\mathbb{S}^2} dn\; \left[W^2(v)+\frac{\lambda}{1-\lambda}\left(W^2(v)-W^1(v)\right)+W^3(n)(\lambda-1)F(v)\right]\\\int_{\mathbb{R}^3} dv\; \left[W^2(v)+\frac{\lambda}{1-\lambda}\left(W^2(v)-W^1(v)\right)+W^3(n)(\lambda-1)F(v)\right]\end{pmatrix}
\end{equation}
and 
\begin{equation}\label{L}
\mathcal{L}(W)=\begin{pmatrix}\int_{\mathbb{S}^2} dn\; \left[W^3(n)\left(W^2(v)-W^1(v)\right)\right]\\-\int_{\mathbb{S}^2} dn\; \left[W^3(n)\left(W^2(v)-W^1(v)\right)\right]\\\int_{\mathbb{R}^3} dv\; \left[W^3(n)\left(W^2(v)-W^1(v)\right)\right]\end{pmatrix}.
\end{equation}
Now, since $ \mathbb{R}\left[\oFF^\eps,\oFF^\eps\right]=0$ using the definition of the decomposition and the just new defined operators we see that for the radiation part we have $ \mathcal{R}[\FF^\eps,\FF^\eps]=L_{\oFF^\eps}(W_\eps)+\mathcal{L}(W_\eps) $ for all $ t\in[0,t_0] $. We therefore work with the following equation
\begin{equation}\label{step2}
\partial_t\oFF^\eps+\partial_tW_\eps=\frac{1}{\eps}\left(L_{\oFF^\eps}(W_\eps)+\mathcal{L}(W_\eps)\right)+\KK\left[\oFF^\eps+W_\eps,\oFF^\eps+W_\eps\right].
\end{equation}
Our aim is now to find the equation which $ \oFF^\eps $ satisfies, and then subtracting it from equation \pef{step2} we obtain the equation satisfied by $ W_\eps $. Let therefore $ \varphi\in C^1\left([0,t_0], C_c^\infty\left(\mathbb{R}^3\right)\right) $ and let $ \eta\in C\left([0,t_0], \mathbb{R}\right) $ be both arbitrary. We can define a linear bounded operator on $ \mathcal{X} $ by
\begin{equation}\label{step22}
A_{\varphi,\eta}(f,g,h)=\int_{\mathbb{R}^3} \varphi f \;dv+\int_{\mathbb{R}^3} \varphi g+(1-\ol_\eps)^2\eta g \;dv+\int_{\mathbb{S}^2} (1-\ol_\eps)^2\eta h\;dn.
\end{equation}
For every arbitrary pair $ \varphi, \eta $ we have $ A_{\varphi,\eta}\left[L_{\oFF^\eps}(W)\right]=0 $ and $ A_{\varphi,\eta}\left[\mathcal{L}(W)\right]=0 $. Indeed it is easy to compute
\begin{equation}\label{step23}
\begin{split}A_{\varphi,\eta}\left[L_{\oFF^\eps}(W)\right]&=\int_{\mathbb{R}^3}\int_{\mathbb{S}^2} \varphi \left[W^2+\frac{\ol_\eps}{1-\ol_\eps}\left(W^2-W^1\right)+W^3(\ol_\eps-1)\oF^\eps\right]\;dn \;dv\\&-\int_{\mathbb{R}^3}\int_{\mathbb{S}^2} \varphi \left[W^2+\frac{\ol_\eps}{1-\ol_\eps}\left(W^2-W^1\right)+W^3(\ol_\eps-1)\oF^\eps\right]\;dn \;dv\\
&-\int_{\mathbb{R}^3}\int_{\mathbb{S}^2}(1-\ol_\eps)^2\eta \left[W^2+\frac{\ol_\eps}{1-\ol_\eps}\left(W^2-W^1\right)+W^3(\ol_\eps-1)\oF^\eps\right]\;dn \;dv\\& +\int_{\mathbb{R}^3}\int_{\mathbb{S}^2}(1-\ol_\eps)^2\eta \left[W^2+\frac{\ol_\eps}{1-\ol_\eps}\left(W^2-W^1\right)+W^3(\ol_\eps-1)\oF^\eps\right]\;dn \;dv.
\end{split}
\end{equation}
and similarly also
\begin{equation}\label{step24}
\begin{split}A_{\varphi,\eta}\left[\mathcal{L}(W)\right]=&\int_{\mathbb{R}^3}\int_{\mathbb{S}^2} \varphi \left[W^3(n)\left(W^2(v)-W^1(v)\right)\right]\;dn \;dv-\int_{\mathbb{R}^3}\int_{\mathbb{S}^2} \varphi \left[W^3(n)\left(W^2(v)-W^1(v)\right)\right]\;dn \;dv\\
-\int_{\mathbb{R}^3}\int_{\mathbb{S}^2}(1-\ol_\eps&)^2\eta \left[W^3(n)\left(W^2(v)-W^1(v)\right)\right]\;dn \;dv +\int_{\mathbb{R}^3}\int_{\mathbb{S}^2}(1-\ol_\eps)^2\eta \left[W^3(n)\left(W^2(v)-W^1(v)\right)\right]\;dn \;dv.
\end{split}
\end{equation}		
Now, using the definition of $ W_\eps $ we also see easily that 
\begin{equation}\label{step25}
\begin{split}A_{\varphi,\eta}\left[W_\eps\right]=&\int_{\mathbb{R}^3}\int_{\mathbb{S}^2} \varphi \alpha_\eps\;dn \;dv-\int_{\mathbb{R}^3}\int_{\mathbb{S}^2} dn\;\varphi \alpha_\eps\;dn \;dv\\
&-\int_{\mathbb{R}^3}\int_{\mathbb{S}^2}(1-\ol_\eps)^2\eta \alpha_\eps\;dn \;dv +\int_{\mathbb{R}^3}\int_{\mathbb{S}^2}(1-\ol_\eps)^2\eta\theta_\eps\;dn \;dv\\
=&(1-\ol_\eps)^2\eta\left(-\int_{\mathbb{R}^3} \alpha_\eps\;dv+\int_{\mathbb{S}^2}\theta_\eps\;dn\right)=0.
\end{split}
\end{equation}	
%
Applying this operator $ A_{\varphi,\eta} $ on both sides of equation \pef{step2} and using partial integration we see that we can simplify that relation as
\begin{equation}\label{step27}
A_{\varphi,\eta}\left[\partial_t\oFF^\eps\right]+A_{\varphi,\eta}\left[\partial_tW_\eps\right]=A_{\varphi,\eta}\left[\KK[\oFF^\eps+W_\eps,\oFF^\eps+W_\eps]\right].
\end{equation}
Since this result holds for every arbitrary $ \varphi,\eta $ we can conclude choosing first $ \eta=0 $ and $ \varphi $ arbitrary and then taking $ \varphi=0 $ and $ \eta=\frac{1}{(1-\ol_\eps)^2} $ that the following must be satisfied
\begin{equation}\label{step28}
\partial_t (1+\ol_\eps)\oF^\eps= \KK_1[\oFF^\eps+W_\eps,\oFF^\eps+W_\eps]+\KK_2[\oFF^\eps+W_\eps,\oFF^\eps+W_\eps]
\end{equation}
and 
\begin{equation}\label{step29}
\partial_t \left(\int_{\mathbb{R}^3} dv\; \ol_\eps\; \oF^\eps\right)+\partial_t\left(\frac{\ol_\eps}{1-\ol_\eps}\right)=\int_{\mathbb{R}^3} dv\;\KK_2[\oFF^\eps+W_\eps,\oFF^\eps+W_\eps].
\end{equation}
We can see that this is true by the following computations
\begin{equation}\label{step213}
A_{\varphi,0}\left[\partial_tW_\eps\right]=\int_{\mathbb{R}^3}dv\, \varphi \left(\partial_t\alpha_\eps-\partial_t\alpha_\eps\right)=0
\end{equation}
and using the structure of $ W_\eps $
\begin{equation}\label{step214}
A_{0,(1-\ol_\eps)^{-2}}\left[\partial_tW_\eps\right]=-\int_{\mathbb{R}^3}dv\,\partial_t\alpha_\eps+\int_{\mathbb{S}^2}dn\,\partial_t\theta_\eps=\partial_t\left(\int_{\mathbb{S}^2}dn\,\theta_\eps-\int_{\mathbb{R}^3}dv\,\alpha_\eps\right)=0.
\end{equation}
Equations \pef{step28} and \pef{step29} are very similar to the one we had in Section 2.2 in equations \pef{expansion8} and \pef{expansion9}. Therefore in the same way we conclude that $ \oFF^\eps $ must satisfy the following equation
\begin{equation}\label{step210}
\begin{split}
\partial_t \oF^\eps=& \frac{1}{1+\ol_\eps}\left(\mathbb{K}_1\left[\oFF^\eps+W_\eps,\oFF^\eps+W_\eps\right]+\mathbb{K}_2\left[\oFF^\eps+W_\eps,\oFF^\eps+W_\eps\right]\right)\\
&+\frac{(1-\ol_\eps)^2\oF^\eps}{(1+\ol_\eps)+(1-\ol_\eps)^2\int_{\mathbb{R}^3}\oF^\eps}\int_{\mathbb{R}^3} \mathbb{K}_1\left[\oFF^\eps+W_\eps,\oFF^\eps+W_\eps\right]dv\\
\partial_t \ol_\eps\;\oF^\eps=& \frac{\ol_\eps}{1+\ol_\eps}\left(\mathbb{K}_1\left[\oFF^\eps+W_\eps,\oFF^\eps+W_\eps\right]+\mathbb{K}_2\left[\oFF^\eps+W_\eps,\oFF^\eps+W_\eps\right]\right)\\
&-\frac{(1-\ol_\eps)^2\ol_\eps\;\oF^\eps}{(1+\ol_\eps)+(1-\ol_\eps)^2\int_{\mathbb{R}^3}\oF^\eps}\int_{\mathbb{R}^3} \mathbb{K}_1\left[\oFF^\eps+W_\eps,\oFF^\eps+W_\eps\right]dv\\
\partial_t\left(\frac{\ol_\eps}{1-\ol_\eps}\right) =& -\frac{(1+\ol_\eps)}{(1+\ol_\eps)+(1-\ol_\eps)^2\int_{\mathbb{R}^3}\oF^\eps}\int_{\mathbb{R}^3} \mathbb{K}_1\left[\oFF^\eps+W_\eps,\oFF^\eps+W_\eps\right]dv\\
\end{split}
\end{equation}
In order to simplify the notation we will say $ \partial_t\oFF^{\eps}=\mathbb{P}_{T_{\oFF^\eps}\mathcal{M}}\left(\KK\left[\oFF^\eps+W_\eps,\oFF^\eps+W_\eps\right]\right) $. With a similar notation, since we already know that $ \oFF $ satisfies
\begin{equation}\label{step211}
\begin{split}
\partial_t \oF=& \frac{1}{1+\ol}\left(\mathbb{K}_1\left[\oFF,\oF\right]+\mathbb{K}_2\left[\oFF,\oFF\right]\right)+\frac{(1-\ol)^2\oF}{(1+\ol)+(1-\ol)^2\int_{\mathbb{R}^3}\oF}\int_{\mathbb{R}^3} \mathbb{K}_1\left[\oFF,\oFF\right]dv\\
\partial_t \ol\;\oF=& \frac{\ol}{1+\ol}\left(\mathbb{K}_1\left[\oFF,\oFF\right]+\mathbb{K}_2\left[\oFF,\oFF\right]\right)-\frac{(1-\ol)^2\ol\;\oF}{(1+\ol)+(1-\ol)^2\int_{\mathbb{R}^3}\oF}\int_{\mathbb{R}^3} \mathbb{K}_1\left[\oFF,\oFF\right]dv\\
\partial_t\left(\frac{\ol}{1-\ol}\right) =& -\frac{(1+\ol)}{(1+\ol)+(1-\ol)^2\int_{\mathbb{R}^3}\oF}\int_{\mathbb{R}^3} \mathbb{K}_1\left[\oFF,\oFF\right]dv\\
\end{split}
\end{equation}
We will say $  \partial_t\oFF=\mathbb{P}_{T_{\oFF}\mathcal{M}}\left(\KK\left[\oFF,\oFF\right]\right) $.
We can now conclude this STEP 2 with the equation for the remainder $ W_\eps $ as follows
\begin{equation}\label{step212}
\partial_tW_\eps=\frac{1}{\eps}\left(L_{\oFF^\eps}(W_\eps)+\mathcal{L}(W_\eps)\right)+\KK\left[\oFF^\eps+W_\eps,\oFF^\eps+W_\eps\right]-\mathbb{P}_{T_{\oFF^\eps}\mathcal{M}}\left(\KK\left[\oFF^\eps+W_\eps,\oFF^\eps+W_\eps\right]\right).
\end{equation}
\textbf{STEP 3: The estimate for $ W_\eps $.}\\
In this step we want to estimate the norm of the remainder $ W_\eps $ and see that it will converge to zero for every positive time $ t\in(0,t_0] $. We aim to use Gr\"onwall's Lemma, therefore we proceed testing the right hand side of equation \pef{step212} with the vector $ (\phi(v),-\phi(v), \varphi(n)) $, where $ \phi(v)=\text{sign}(\alpha_\eps)\left(1+\arrowvert v\arrowvert^2\right) $ and $ \varphi(n)=\text{sign}(\theta_\eps) $. 
Using the estimates for the elastic and non elastic kernels \pef{usual estimate 3} and \pef{usual estimate 2} and the assumption on $ \ol_\eps $ it is not difficult to see that 
\begin{equation}\label{step3}
\int_{\mathbb{R}^3}dv\;\left\arrowvert\KK_1[\FF^\eps,\FF^\eps]+\KK_2[\FF^\eps,\FF^\eps]\right\arrowvert\left(1+\arrowvert v\arrowvert^2\right)\leq C(C_0,\eps_0)E_0\kappa_4
\end{equation} 
and similarly
\begin{equation}\label{step31}
\int_{\mathbb{R}^3}dv\;\left\arrowvert\KK_1[\FF^\eps,\FF^\eps]+\KK_2[\FF^\eps,\FF^\eps]\right\arrowvert\leq C(C_0,\eps_0)E_0^2.
\end{equation} 
For this reason there exists a constant $ C_1>0 $ which depends on the elastic and non elastic kernels, on the initial energy $ E_0 $ and on $ \kappa_4 $ but independent of $ \eps $ such that for every $ t\in[0,t_0] $ the following holds true
\begin{equation}\label{step32}\hspace*{-10pt}
\begin{split}
&\int_{\mathbb{R}^3}dv\;\phi(v) \left(\KK_1[\FF^\eps,\FF^\eps]+\KK_2[\FF^\eps,\FF^\eps]-\mathbb{P}^1_{T_{\oFF^\eps}\mathcal{M}}\left(\KK[\FF^\eps,\FF^\eps]\right)-\mathbb{P}^2_{T_{\oFF^\eps}\mathcal{M}}\left(\KK[\FF^\eps,\FF^\eps]\right)\right)-\int_{\mathbb{S}^2} dn\;\varphi(n)\mathbb{P}^3_{T_{\oFF^\eps}\mathcal{M}}\left(\KK[\FF^\eps,\FF^\eps]\right)\\&\leq C_1.
\end{split}
\end{equation}
It is also easy to compute the following estimate for $ \mathcal{L} $
\begin{equation}\label{step33}
\begin{split}
&\int_{\mathbb{R}^3}dv\;\phi(v)\mathcal{L}^1(W_\eps)-\int_{\mathbb{R}^3}dv\;\phi(v)\mathcal{L}^2(W_\eps)+\int_{\mathbb{S}^2} dn\;\varphi(n)\mathcal{L}^3(W_\eps)\\
=&2\int_{\mathbb{R}^3}dv\;\phi(v)\mathcal{L}^1(W_\eps)+\int_{\mathbb{S}^2} dn\;\varphi(n)\mathcal{L}^3(W_\eps)\\
=&-4\int_{\mathbb{R}^3} dv\; \arrowvert\alpha_\eps\arrowvert\left(1+\arrowvert v\arrowvert^2\right)\left(\int_{\mathbb{S}^2}dn\;\theta_\eps\right)-2\int_{\mathbb{R}^3} dv\; \alpha_\eps\left(\int_{\mathbb{S}^2}dn\;\arrowvert\theta_\eps\arrowvert\right)\\
\leq&2\Arrowvert W_\eps\Arrowvert_{\mathcal{X}}^2.
\end{split}
\end{equation} 
We can come to the conclusion for the estimate of the remainder looking at the behavior of the operator $ L_{\oFF^\eps}(W_\eps) $. Here we really want to bound that expression by a negative constant multiplied to the norm of $ W_\eps $. This would guarantee us the good decay for the remainder. Again we shall use also the properties of $ W_\eps $. We conclude
\begin{equation}\label{step34}
\begin{split}
&\int_{\mathbb{R}^3}dv\;\phi(v)L_{\oFF^\eps}^1(W_\eps)-\int_{\mathbb{R}^3}dv\;\phi(v)L_{\oFF^\eps}^2(W_\eps)+\int_{\mathbb{S}^2} dn\;\varphi(n)L_{\oFF^\eps}^3(W_\eps)\\
=&2\int_{\mathbb{R}^3}dv\;\phi(v)L_{\oFF^\eps}^1(W_\eps)+\int_{\mathbb{S}^2} dn\;\varphi(n)\mathcal{L}^3(W_\eps)\\
=&-2\frac{1+\ol_\eps}{1-\ol_\eps}\int_{\mathbb{R}^3} dv\; \arrowvert\alpha_\eps\arrowvert\left(1+\arrowvert v\arrowvert^2\right)-\left(\int_{\mathbb{S}^2}dn\;\text{sign}(\theta_\eps)\right)\frac{1+\ol_\eps}{1-\ol_\eps}\int_{\mathbb{R}^3} dv\; \alpha_\eps\\
&-2(1-\ol\eps)\left(\int_{\mathbb{R}^3}dv\;\oF^\eps\text{sign}(\alpha_\eps)\left(1+\arrowvert v\arrowvert^2\right)\right)\left(\int_{\mathbb{S}^2}dn\;\theta_\eps\right)-(1-\ol\eps)\left(\int_{\mathbb{R}^3}dv\;\oF^\eps\right)\left(\int_{\mathbb{S}^2}dn\;\arrowvert\theta_\eps\arrowvert\right)\\
=&I+II.
\end{split}
\end{equation}
Now we should estimate these terms. The first one is a consequence of taking the absolute value
\begin{equation}\label{step35}
I\leq -\frac{1+\ol_\eps}{1-\ol_\eps}\int_{\mathbb{R}^3} dv\; \arrowvert\alpha_\eps\arrowvert\left(1+\arrowvert v\arrowvert^2\right)\leq -\int_{\mathbb{R}^3} dv\; \arrowvert\alpha_\eps\arrowvert\left(1+\arrowvert v\arrowvert^2\right).
\end{equation}
For the second one using that the initial energy $ E_0 $ is bounded by $ \frac{1}{2} $ and that as we saw after the first step $ \int_{\mathbb{R}^3} dv \arrowvert \oF^\eps\arrowvert(1+\arrowvert v\arrowvert^2)\leq E_0 $. Since also $ \ol\in[0,\frac{E_0}{2\eps_0+E_0}) $ and $ \int_{\mathbb{R}^3}\oF_\eps(v)dv\in\left(\frac{\kappa_0}{2},\kappa_0\right) $ we compute
\begin{equation}\label{step37}
\begin{split}
II&\leq -2(1-\ol_\eps)\left(\int_{\mathbb{R}^3}dv\;\oF^\eps\left(1+\arrowvert v\arrowvert^2\right)\right)\left(\left(1-\frac{\kappa_0}{4}\right)\int_{\mathbb{R}^3}dv\;\alpha_\eps+\frac{\kappa_0}{4}\int_{\mathbb{S}^2}dn\;\theta_\eps\right)\\&\;\;\;\;-(1-\ol\eps)\left(\int_{\mathbb{R}^3}dv\;\oF^\eps\right)\left(\int_{\mathbb{S}^2}dn\;\arrowvert\theta_\eps\arrowvert\right)\\
\leq& \left(1-\frac{\kappa_0}{4}\right)\int_{\mathbb{R}^3}dv\;\arrowvert\alpha_\eps\arrowvert -2(1-\ol)\frac{\kappa_0}{4}\left(\int_{\mathbb{R}^3}dv\;\oF^\eps\left(1+\arrowvert v\arrowvert^2\right)\right)\int_{\mathbb{S}^2}dn\;\theta_\eps-(1-\ol)\left(\int_{\mathbb{R}^3}dv\;\oF^\eps\right)\left(\int_{\mathbb{S}^2}dn\;\arrowvert\theta_\eps\arrowvert\right)\\
&-(\ol-\ol_\eps)\frac{\kappa_0}{4}\left(\int_{\mathbb{R}^3}dv\;\oF^\eps\left(1+\arrowvert v\arrowvert^2\right)\right)\int_{\mathbb{S}^2}dn\;\theta_\eps-(\ol-\ol_\eps)\left(\int_{\mathbb{R}^3}dv\;\oF^\eps\right)\left(\int_{\mathbb{S}^2}dn\;\arrowvert\theta_\eps\arrowvert\right)\\
\leq&\left(1-\frac{\kappa_0}{4}\right)\int_{\mathbb{R}^3}dv\;\arrowvert\alpha_\eps\arrowvert - \frac{\kappa_0}{4}\frac{2\eps_0}{2\eps_0+E_0}\left(\int_{\mathbb{S}^2}dn\;\arrowvert\theta_\eps\arrowvert\right)\\&+\arrowvert\ol_\eps-\ol\arrowvert\frac{\kappa_0}{8}\left(2\int_{\mathbb{R}^3}dv\;\arrowvert\alpha_\eps\arrowvert\right)+\arrowvert\ol_\eps-\ol\arrowvert\kappa_0\left(\int_{\mathbb{S}^2}dn\;\arrowvert\theta_\eps\arrowvert\right).
\end{split}
\end{equation}
Combining these estimates and equation \pef{deco3} in STEP 1 we see that for every $ t\in[0,t_0] $ we have
\begin{equation}\label{step38}
I+II\leq -\frac{\kappa_0}{8}\frac{2\eps_0}{2\eps_0+E_0}\Arrowvert W_\eps\Arrowvert_{\mathcal{X}}+\kappa_0\Arrowvert \oFF^\eps-\oFF\Arrowvert_{\mathcal{X}}\Arrowvert W_\eps\Arrowvert_{\mathcal{X}}\leq -\frac{\kappa_0}{16}\frac{2\eps_0}{2\eps_0+E_0}\Arrowvert W_\eps\Arrowvert_{\mathcal{X}}.
\end{equation}
Putting together equations \pef{step32}, \pef{step33} and \pef{step38} and using the result of STEP 1 that \linebreak$ \Arrowvert W_\eps\Arrowvert_{\mathcal{X}}\leq\delta_0~:=~ \frac{2\eps_0\kappa_0}{64(2\eps_0+E_0)}  $, we conclude
\begin{equation}\label{step39}
\partial_t \Arrowvert W_\eps\Arrowvert_{\mathcal{X}}\leq -\frac{C_2}{\eps}\Arrowvert W_\eps\Arrowvert_{\mathcal{X}}+C_1 \;\;\;\;\;\;\;\text{ for all }t\in[0,t_0],
\end{equation}
where $ C_1=\frac{\kappa_0}{32}\frac{2\eps_0}{2\eps_0+E_0} $ is independent of $ \eps $ and $ t $.

Using that $ \Arrowvert W_0\Arrowvert_{\mathcal{X}}<\delta<\delta_0 $ Gr\"onwall's Lemma implies the good decay of the remainder as $ \eps\to 0 $
\begin{equation}\label{step310}
\Arrowvert W_\eps\Arrowvert \leq \Arrowvert W_0\Arrowvert_{\mathcal{X}} e^{-\frac{C_2}{\eps}t}+\frac{C_1}{C_2}\eps\left(1-e^{-\frac{C_2}{\eps}t}\right)\leq\delta e^{-\frac{C_2}{\eps}t}+\frac{C_1}{C_2}\eps.
\end{equation}
Therefore we conclude also for all $ s>0 $
\begin{equation}\label{step311}
\lim\limits_{\eps\to 0}\sup_{s\leq t\leq t_0}\Arrowvert W_\eps\Arrowvert =\lim\limits_{\eps\to 0} \delta e^{-\frac{C_2}{\eps}s}+\frac{C_1}{C_2}\eps=0.
\end{equation}
\textbf{STEP 4: The estimate for $ \oFF^\eps-\oFF $.}\\
We proceed estimating the norm of $ \oFF^\eps-\oFF $ using also in this case the Gr\"onwall's inequality. This step will then imply the theorem, as we will see in STEP 5.

We start recalling that as we have seen in STEP 1, namely $ \ol_\eps\in(0,1) $. Now, recalling that $ \oF>0 $ and $ \FF^\eps>0 $ almost everywhere, since the initial data is positive. It is important to notice, that by Lemma \ref{parametrization} also $ \oF^\eps>0 $.
We want first to estimate the time derivative of $ \Arrowvert \oFF^\eps-\oFF\Arrowvert_{\mathcal{X}} $. We start with some preliminary estimates. We define two functions $ \phi(v)=\text{sign}\left(\oF^\eps-\oF\right)\left(\arrowvert v\arrowvert^2\right) $ and $ \psi(v)=\text{sign}\left(\ol_\eps\oF^\eps-\ol\;\oF\right)\left(\arrowvert v\arrowvert^2\right) $. Then since both functions $ \oF^\eps $ and $ \oF $ are non-negative and their $ L^1_4 $-norm is bounded uniformly by $ \kappa_4 $, estimates \pef{uniq1} and \pef{uniq2} imply
\begin{equation}\label{step45}
\begin{split}
\int_{\mathbb{R}^3} dv \;\phi(v)\left[\KK_1[\oF^\eps,\ol_\eps]+\KK_2[\oF^\eps,\ol_\eps]-\KK_1[\oF,\ol_\eps]-\KK_2[\oF,\ol_\eps]\right]
\leq C(\eps_0, C_0) \kappa_4 \Arrowvert \oF^\eps-\oF\Arrowvert_{L_2^1}.
\end{split}
\end{equation}
where the constant is independent of the time and of $ \eps $ but depends only on the kernels. Using now that 
\begin{equation}\label{step410}
\begin{split}
&\ol_\eps\oF^\eps(v)\oF^\eps(w)-\ol\,\oF(v)\oF(w)\\=&\left(\ol_\eps\oF^\eps(v)-\ol\,\oF(v)\right)\oF^\eps(w)+\oF(v)\left(\ol_\eps\oF^\eps(w)-\ol\,\oF(w)\right)+\left(\ol_\eps-\ol\right)\oF(v)\oF^\eps(w),
\end{split}
\end{equation}
we can compute completely analogously as in \pef{uniq1} and \pef{uniq2} that there exists some constant \linebreak$ C=C(\eps_0,C_0,\kappa_4) $ independent of time and $ \eps $ such that
\begin{equation}\label{step411}
\begin{split}
&\int_{\mathbb{R}^3} dv \;\psi(v)\left[\ol_\eps\KK_1[\oF^\eps,\ol_\eps]+\ol_\eps\KK_2[\oF^\eps,\ol_\eps]-\ol\KK_1[\oF,\ol_\eps]-\ol\KK_2[\oF,\ol_\eps]\right]\\\leq &C\left( \Arrowvert \oF^\eps-\oF\Arrowvert_{L_2^1}+\Arrowvert \ol_\eps\oF^\eps-\ol\,\oF\Arrowvert_{L_2^1}+\left\arrowvert\frac{\ol_\eps}{1-\ol_\eps}-\frac{\ol}{1-\ol}\right\arrowvert\right).
\end{split}
\end{equation}
Now we are ready for looking at the last estimates. First of all we can write down the equation for $ \partial_t\left(\oF^\eps-\oF\right) $ as follows
\begin{equation}\label{step412}
\begin{split}
\partial_t\left(\oF^\eps-\oF\right)=&\frac{1}{1+\ol_\eps}\left(\KK_1+\KK_2\right)\left[\oF^\eps,\ol_\eps\right]-\frac{1}{1+\ol}\left(\KK_1+\KK_2\right)\left[\oF,\ol\right]\\
&+\frac{(1-\ol_\eps)^2\oF^\eps}{(1+\ol_\eps)+(1-\ol_\eps)^2\int_{\mathbb{R}^3}\oF^\eps}\int_{\mathbb{R}^3} \mathbb{K}_1\left[\oF^\eps,\ol_\eps\right]dv\\
&-\frac{(1-\ol)^2\oF}{(1+\ol)+(1-\ol)^2\int_{\mathbb{R}^3}\oF}\int_{\mathbb{R}^3} \mathbb{K}_1\left[\oF,\ol\right]dv\\
&+\frac{1}{1+\ol_\eps}\left(\KK_1+\KK_2\right)\left(\left[\oFF^\eps,W_\eps\right]+\left[W_\eps,\oFF^\eps\right]+\left[W_\eps,W_\eps\right]\right)\\
&+\frac{(1-\ol_\eps)^2\oF^\eps}{(1+\ol_\eps)+(1-\ol_\eps)^2\int_{\mathbb{R}^3}\oF^\eps}\int_{\mathbb{R}^3} \mathbb{K}_1\left(\left[\oFF^\eps,W_\eps\right]+\left[W_\eps,\oFF^\eps\right]+\left[W_\eps,W_\eps\right]\right)dv.
\end{split}
\end{equation}
And therefore it is easy to compute, knowing the result \pef{step45} and that $ \arrowvert\ol_\eps-\ol\arrowvert\leq \left\arrowvert\frac{\ol_\eps}{1-\ol_\eps}-\frac{\ol}{1-\ol}\right\arrowvert $, the following estimate
\begin{equation}\label{step413}
\begin{split}
\partial_t\Arrowvert \oF^\eps&-\oF\Arrowvert_{L_2^1}\leq\frac{1}{1+\ol_\eps}\int_{\mathbb{R}^3} dv \;\phi(v)\left[\KK_1\left[\oF^\eps,\ol_\eps\right]+\KK_2\left[\oF^\eps,\ol_\eps\right]-\KK_1\left[\oF,\ol_\eps\right]-\KK_2\left[\oF,\ol_\eps\right]\right]\\
&+\frac{1}{1+\ol_\eps}\int_{\mathbb{R}^3} dv \;\phi(v)\left[\KK_1\left[\oF,\ol_\eps\right]+\KK_2\left[\oF,\ol_\eps\right]-\KK_1\left[\oF,\ol\right]-\KK_2\left[\oF,\ol\right]\right]\\
&+\left(\frac{1}{1+\ol_\eps}-\frac{1}{1+\ol}\right)\int_{\mathbb{R}^3} dv \;\phi(v)\left[\KK_1\left[\oF,\ol\right]+\KK_2\left[\oF,\ol\right]\right]\\
&+\frac{(1-\ol_\eps)^2\int_{\mathbb{R}^3} \mathbb{K}_1\left[\oF^\eps,\ol_\eps\right]dv}{(1+\ol_\eps)+(1-\ol_\eps)^2\int_{\mathbb{R}^3}\oF^\eps}\int_{\mathbb{R}^3}dv\left(\oF^\eps-\oF\right)\phi(v)\\
&+\frac{(1-\ol_\eps)^2\int_{\mathbb{R}^3}dv\,\oF\phi(v)}{(1+\ol_\eps)+(1-\ol_\eps)^2\int_{\mathbb{R}^3}\oF^\eps}\left(\int_{\mathbb{R}^3} \mathbb{K}_1\left[\oF^\eps,\ol_\eps\right]-\mathbb{K}_1\left[\oF,\ol_\eps\right]dv+\int_{\mathbb{R}^3} \mathbb{K}_1\left[\oF,\ol_\eps\right]-\mathbb{K}_1\left[\oF,\ol\right]dv\right)\\
&+(1-\ol_\eps)^2\int_{\mathbb{R}^3}dv\,\oF\phi(v)\int_{\mathbb{R}^3} \mathbb{K}_1\left[\oF,\ol\right]dv\left(\frac{1}{(1+\ol_\eps)+(1-\ol_\eps)^2\int_{\mathbb{R}^3}\oF^\eps}-\frac{1}{(1+\ol)+(1-\ol)^2\int_{\mathbb{R}^3}\oF}\right)\\
&+\frac{1}{1+\ol_\eps}\int_{\mathbb{R}^3}dv\,\phi(v)\left(\KK_1+\KK_2\right)\left(\left[\oFF^\eps,W_\eps\right]+\left[W_\eps,\oFF^\eps\right]+\left[W_\eps,W_\eps\right]\right)\\
&+\frac{(1-\ol_\eps)^2\int_{\mathbb{R}^3}dv\,\phi(v)\oF^\eps}{(1+\ol_\eps)+(1-\ol_\eps)^2\int_{\mathbb{R}^3}\oF^\eps}\int_{\mathbb{R}^3} \mathbb{K}_1\left(\left[\oFF^\eps,W_\eps\right]+\left[W_\eps,\oFF^\eps\right]+\left[W_\eps,W_\eps\right]\right)dv\\
\leq& C(\eps_0,C_0,\kappa_4,\kappa_2)\left(\Arrowvert \oFF^\eps-\oFF\Arrowvert_{\mathcal{X}}+\Arrowvert W_\eps\Arrowvert_{\mathcal{X}}^{1/2}\right).
\end{split}
\end{equation}
This estimate holds for several reasons. The first term was estimated in \pef{step45}, the second and the third hold by the boundedness of the $ L^1_4 $-norm and by the equations \pef{uniq1} and \pef{uniq2}, the fourth uses the non-negativity of the integral of $ \oF^\eps $ and equation \pef{lip3}. For the fifth term we can use the estimate in \pef{lip2} together with the remark about the norm of $ \oF^\eps $ and estimate \pef{lip5}, while the sixth terms is an easy calculation. For the last two terms we use the estimates on the kernels as in \pef{usual estimate 3} and \pef{usual estimate} to see that 
\begin{equation}\label{step416}
\begin{split}
&\int_{\mathbb{R}^3}dv\,\left[\phi(v)\left(\KK_1+\KK_2\right)\left(\left[\oFF^\eps,W_\eps\right]+\left[W_\eps,\oFF^\eps\right]+\left[W_\eps,W_\eps\right]\right)+ \mathbb{K}_1\left(\left[\oFF^\eps,W_\eps\right]+\left[W_\eps,\oFF^\eps\right]+\left[W_\eps,W_\eps\right]\right)\right]dv\\
\leq & C(\eps_0, C_0)\left(\Arrowvert \oF^\eps\Arrowvert_{L_3^1}\Arrowvert W_\eps\Arrowvert_{L_3^1}+\Arrowvert W_\eps\Arrowvert_{L_3^1}^2+\Arrowvert \oF^\eps\Arrowvert_{L_2^1}\Arrowvert W_\eps\Arrowvert_{L_2^1}+\Arrowvert W_\eps\Arrowvert_{L_2^1}^2\right)\leq C(\eps_0, C_0)\kappa_4^{3/2}\Arrowvert W_\eps\Arrowvert_{L_2^1}^{1/2}
\end{split}
\end{equation}
where in the end we used the interpolation formula for the $ L^1_3 $-norm.\\

In a completely analogous way we can estimate also the estimate for $ \partial_t \Arrowvert \ol_\eps\oF^\eps-\ol\,\oF\Arrowvert_{L_2^1} $ and \linebreak$ \partial_t\left\arrowvert \frac{\ol_\eps}{1-\ol_\eps}-\frac{\ol}{1-\ol}\right\arrowvert $. In the first case, equation \pef{step411} will substitute the first term in \pef{step413} and we substitute $ \phi $ with $ \psi $ and multiply with $ \ol $ or $ \ol_\eps $ were needed. Then following exactly the same calculation we did in \pef{step413} we estimate 
\begin{equation}\label{step414}
\partial_t \Arrowvert \ol_\eps\oF^\eps-\ol\,\oF\Arrowvert_{L_2^1}\leq C(\eps_0,C_0,\kappa_4,\kappa_2)\left(\Arrowvert \oFF^\eps-\oFF\Arrowvert_{\mathcal{X}}+\Arrowvert W_\eps\Arrowvert_{\mathcal{X}}^{1/2}\right).
\end{equation}
We conclude with the estimate for the photon number density. Here it is even easier since we estimate everything by means of the absolute value following the estimates for the lines from five to eight and line ten of equation \pef{step413}. We conclude therefore that there exists some constant $ C=C(\eps_0,C_0,\kappa_4)$ independent of $ t $ and $ \eps_0 $ such that for all $ \eps>0 $ and $ t\in[0,t_0] $ the following holds true
\begin{equation}\label{step415}
\Arrowvert \oFF^\eps-\oFF\Arrowvert_{\mathcal{X}}\leq \int_{0}^t C \left(\Arrowvert \oFF^\eps-\oFF\Arrowvert_{\mathcal{X}}+\Arrowvert W_\eps\Arrowvert_{\mathcal{X}}^{1/2}\right).
\end{equation}
\newpage
\hspace{-15pt}\textbf{STEP 5: Conclusion.}\\
Now we are ready to conclude the claim of the theorem. Given equation \pef{step310} we see that equation \pef{step415} can be estimated by
\begin{equation}\label{step5}
\begin{split}
\Arrowvert \oFF^\eps-\oFF\Arrowvert_{\mathcal{X}}\leq& \int_{0}^t C \left(\Arrowvert \oFF^\eps-\oFF\Arrowvert_{\mathcal{X}}+\sqrt{\delta} e^{-\frac{C_2}{2\eps}t}+\sqrt{\frac{C_1}{C_2}}\sqrt{\eps}\right)dt\\
=& \frac{C\sqrt{2\delta\eps}}{C_2}\left(1-e^{-\frac{C_2}{2\eps}t}\right)+C\sqrt{\frac{C_1}{C_2}}\sqrt{\eps} t+ \int_{0}^t C \Arrowvert \oFF^\eps-\oFF\Arrowvert_{\mathcal{X}}dt\\
\leq & C\left(\frac{C\sqrt{2\delta\eps}}{C_2}+\sqrt{\frac{C_1}{C_2}}\sqrt{\eps}t\right)+\int_{0}^t C \Arrowvert \oFF^\eps-\oFF\Arrowvert_{\mathcal{X}}dt.
\end{split}
\end{equation}
Hence, the Gr\"onwall's Inequality implies 
\begin{equation}\label{step51}
\Arrowvert \oFF^\eps-\oFF\Arrowvert_{\mathcal{X}}\leq C\left(\frac{C\sqrt{2\delta\eps}}{C_2}+\sqrt{\frac{C_1}{C_2}}\sqrt{\eps}t\right)e^{Ct},
\end{equation}
which converges to zero uniformly in $ [0,t_0] $ as $ \eps\to0 $. Since $ \Arrowvert \FF^\eps-\oFF\Arrowvert_{\mathcal{X}}\leq \Arrowvert \oFF^\eps-\oFF\Arrowvert_{\mathcal{X}}+\Arrowvert W_\eps\Arrowvert_{\mathcal{X}} $ equations \pef{step311} and \pef{step51} imply the theorem.
\end{proof}
\end{theorem}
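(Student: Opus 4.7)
The natural approach is to exploit the slow-manifold structure set up in Section 2 and the decomposition Lemma \ref{parametrization}: write $\FF^\eps = \oFF^\eps + W_\eps$ with $\oFF^\eps \in \mathcal{M}_+$ parametrized by $(\ol_\eps, \oF^\eps)$ and $W_\eps = (\alpha_\eps, -\alpha_\eps, \theta_\eps)$ in the orthogonal direction \pef{orthogonalspace}. I will then show that $W_\eps$ is damped at rate $1/\eps$ by the linearization of $\mathcal{R}$ at $\oFF^\eps$, while $\oFF^\eps$ satisfies a perturbed form of \pef{newsystem} that converges to $\oFF$ as the perturbation vanishes. The decomposition is available on a small interval $[0,t_0]$ chosen by continuity so that $\FF^\eps$ stays in the neighborhood of $\mathcal{M}_+$ where Corollary \ref{parametrization2} applies, and uniqueness of the decomposition ensures $\oFF^\eps(0) = \FF_0$.

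\textbf{Evolution equations.} Plugging the decomposition into \pef{epsequation} and using that the linearization of $\mathcal{R}$ at $\oFF^\eps$ vanishes on the tangent space $T_{\oFF^\eps}\mathcal{M}$ characterized by \pef{tangentspace}, I will project the equation onto the tangent space by testing with vectors of the form $(\varphi, \varphi + (1-\ol_\eps)\eta, \eta)$. This eliminates the $1/\eps$ terms and yields a closed equation for $(\ol_\eps, \oF^\eps)$ of the same form as \pef{newsystem}, but with collision terms acting on $\oFF^\eps + W_\eps$ instead of $\oFF^\eps$. Subtracting this tangential equation from the full evolution gives
\begin{equation*}
\partial_t W_\eps = \frac{1}{\eps}\bigl(L_{\oFF^\eps}(W_\eps) + \mathcal{L}(W_\eps)\bigr) + (I - \mathbb{P}_{T_{\oFF^\eps}\mathcal{M}})\,\KK[\oFF^\eps + W_\eps, \oFF^\eps + W_\eps],
\end{equation*}
where $L_{\oFF^\eps}$ is the linear part of $\mathcal{R}$ at $\oFF^\eps$ and $\mathcal{L}(W_\eps)$ collects the quadratic contributions in $W_\eps$.

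\textbf{Damping of the transverse part.} I will pair the $W_\eps$-equation with the sign-weighted test vector $(\mathrm{sign}(\alpha_\eps)(1+|v|^2), -\mathrm{sign}(\alpha_\eps)(1+|v|^2), \mathrm{sign}(\theta_\eps))$ in order to differentiate $\|W_\eps\|_\mathcal{X}$. The heart of the argument is to establish the coercivity estimate
\begin{equation*}
\bigl\langle (\phi, -\phi, \varphi),\, L_{\oFF^\eps}(W_\eps) \bigr\rangle \leq -c\,\|W_\eps\|_\mathcal{X}
\end{equation*}
with $c > 0$ depending only on $\kappa_0, \eps_0, E_0$: this uses crucially the orthogonality constraint $\int \alpha_\eps = \int \theta_\eps$ together with the uniform lower bound $\int \oF^\eps \geq \kappa_0/2$ coming from conservation of mass, plus the hypothesis $E_0 \leq 1/2$ to extract a quantitative constant. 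The quadratic term $\mathcal{L}(W_\eps)$ contributes $O(\|W_\eps\|^2)$ (controlled by the initial smallness $\delta$), while the collision term is $O(1)$ uniformly in $\eps$ thanks to the $\eps$-independent $L^1_4$-bound supplied by Lemma \ref{kappa_4}. Grönwall's inequality then delivers $\|W_\eps(t)\|_\mathcal{X} \leq \delta e^{-ct/\eps} + C\eps/c$, so $\sup_{s \leq t \leq t_0}\|W_\eps\|_\mathcal{X} \to 0$ for every $s > 0$ as $\eps \to 0$.

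\textbf{Convergence on the manifold and closure.} For $\oFF^\eps - \oFF$ I will compare the right-hand sides by writing each expression for $(\ol_\eps, \oF^\eps)$ as the analogous expression for $(\ol, \oF)$ plus Lipschitz corrections in $\oFF^\eps - \oFF$ and additional terms linear or quadratic in $W_\eps$. Combining the Lipschitz estimates developed in the proof of Lemma \ref{lemcutoff5} with the uniform $L^1_4$-control and the interpolation of Lemma \ref{lem0}, one should obtain
\begin{equation*}
\partial_t \|\oFF^\eps - \oFF\|_\mathcal{X} \leq C\,\|\oFF^\eps - \oFF\|_\mathcal{X} + C\,\|W_\eps\|_\mathcal{X}^{1/2}.
\end{equation*}
Since $\|W_\eps\|_\mathcal{X}^{1/2}$ integrates to $O(\sqrt{\eps})$, a final Grönwall yields uniform convergence on $[0, t_0]$, and the triangle inequality closes the short-time argument. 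To reach arbitrary $T > 0$ I will iterate, using a finite cover of $[0, T]$ by overlapping intervals on which the decomposition Lemma remains applicable, together with uniqueness (Theorem \ref{infinityuniqueness}) to identify the new manifold solution with $\oFF$ at each step. The main obstacle is the coercivity estimate for $L_{\oFF^\eps}$: because its kernel is exactly the tangent space $T_{\oFF^\eps}\mathcal{M}$, the negative constant must be extracted quantitatively from the orthogonality constraint $\int \alpha_\eps = \int \theta_\eps$ and the uniform positivity $\int \oF^\eps \geq \kappa_0/2$; without this spectral gap the fast relaxation to the slow manifold cannot be captured, and the rest of the argument collapses.
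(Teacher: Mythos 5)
Your proposal follows the same approach as the paper's proof: decompose $\FF^\eps = \oFF^\eps + W_\eps$ via Lemma \ref{parametrization}, derive the equation for $\oFF^\eps$ by testing along the tangent space (annihilating the $1/\eps$ linear term), obtain the $W_\eps$-equation by subtraction, prove the quantitative coercivity of $L_{\oFF^\eps}$ on the orthogonal direction using $\int\alpha_\eps=\int\theta_\eps$, $\int\oF^\eps\geq\kappa_0/2$ and $E_0\leq 1/2$, and close with two Grönwall arguments where the $\|W_\eps\|^{1/2}$ interpolation estimate feeds an $O(\sqrt\eps)$ error into the slow-manifold comparison. This is essentially identical to the paper's five-step argument, including the identification of the coercivity gap as the crucial ingredient.
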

We can extend the result of the theorem for larger time. We can argue using the fact that for some positive time $ t_0 $ the solution $ \FF^\eps $ is in a small neighborhood of $ \mathcal{M}_+ $ and so we can again apply Theorem \ref{thmrigorous1}. We repeat then the same procedure.
\begin{corollary}
Let $ T>0 $. Under the assumption of Theorem \ref{thmrigorous1} the following holds for all $ s>0 $
\begin{equation}\label{corollary}
\lim\limits_{\eps\to 0}\sup_{s\leq t\leq T}\Arrowvert \FF^\eps-\oFF\Arrowvert_\mathcal{X}=0.
\end{equation}
\begin{proof}
We only need to check Step 1 in Theorem \ref{thmrigorous1} at time $ t_0 $. There exists some $ \eps_1>0 $ such that for all $ \eps\leq \eps_1 $ we have on one hand that 
\begin{equation}\label{corollary2}
\Arrowvert \FF^\eps (t_0)-\oFF^\eps(t_0)\Arrowvert_{\mathcal{X}}=\Arrowvert W_\eps(t_0)\Arrowvert_{\mathcal{X}}<\delta<\delta_0
\end{equation}
and on the other hand also 
\begin{equation}\label{corollary3}
\Arrowvert \oFF^\eps (t_0)-\oFF(t_0)\Arrowvert_{\mathcal{X}}<\tilde{\delta}.
\end{equation}
This holds for some $ \tilde{\delta}>0 $ small enough such that there exists some $ t_2=t_0+\nu>t_0 $ so that for every $ t\in[t_0-\nu,t_2]  $ we have
\begin{equation}\label{cor4}
\Arrowvert \FF^\eps (t)-\oFF^\eps(t_0)\Arrowvert_{\mathcal{X}}<\delta_1<\delta_0
\end{equation}
for some $ \delta_1 $ small enough. $ \delta_1 $ is chosen so that we have again a decomposition in $ \FF^\eps=\oFF^\eps+W_\eps $ as in the Theorem \ref{thmrigorous1}. The time $ t_2 $ is also chosen so that for all $ t\in[0,t_2] $ the following holds
\begin{equation}\label{cor5}
\Arrowvert \oFF^\eps (t)-\oFF(t)\Arrowvert_{\mathcal{X}}<\frac{1}{16}\frac{2\eps_0}{2\eps_0+E_0}.
\end{equation}
Then the corollary follows from all the steps in Theorem \ref{thmrigorous1} and from the uniform continuity of the $ \mathcal{X} $-norm in $ [0,T] $, i.e. we can repeat a finite number of time the above argument.
\end{proof}
\end{corollary}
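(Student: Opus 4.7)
The plan is to extend Theorem \ref{thmrigorous1} from the short-time interval $[s,t_0]$ to $[s,T]$ by iterating it finitely many times with a restart procedure. The key observation is that the crucial constant $\delta_0 = \frac{2\eps_0\kappa_0}{64(2\eps_0+E_0)}$, together with the exponential decay rate $C_2/\eps$ and the source constant $C_1$ appearing in Step 3 of the proof of Theorem \ref{thmrigorous1}, depend only on the conserved quantities mass $\kappa_0$ and energy $E_0$ (both preserved by \pef{epsequation} and by \pef{newsystem1}), the collision parameters $\eps_0,C_0$, and the uniform-in-time $L^1_4$ bound $\kappa_4$ granted by Lemma \ref{kappa_4}. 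None of these deteriorate along the flow, so each iteration will have access to the same horizon $t_0^\ast>0$ as the first one.

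For the restart at time $t_0^\ast$, I would argue as follows. Theorem \ref{thmrigorous1} gives convergence $\FF^\eps(t_0^\ast)\to\oFF(t_0^\ast)$ in $\mathcal{X}$, so for every $\delta<\delta_0$ there is $\eps_1>0$ with $\Arrowvert\FF^\eps(t_0^\ast)-\oFF(t_0^\ast)\Arrowvert_\mathcal{X}<\delta$ whenever $\eps\leq\eps_1$. The target $\oFF(t_0^\ast)\in\mathcal{M}_+$ (positivity is propagated by \pef{newsystem1} via the lower bound \pef{Assumption5}), has the same mass $\kappa_0$ and energy $E_0\leq 1/2$, inherits the Gaussian lower bound and the $L^1_4$ control from the preceding analysis; moreover, $\FF^\eps(t_0^\ast)>0$. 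Thus the hypotheses of Theorem \ref{thmrigorous1} are satisfied with $\oFF(t_0^\ast)$ playing the role of $\FF_0$ and $\FF^\eps(t_0^\ast)-\oFF(t_0^\ast)$ playing the role of $W_0$. Applying it yields $\lim_{\eps\to 0}\sup_{t_0^\ast+s'\leq t\leq 2t_0^\ast}\Arrowvert\FF^\eps-\oFF\Arrowvert_\mathcal{X}=0$ for every $s'>0$; combined with the original theorem one obtains convergence on $[s,2t_0^\ast]$. Iterating $\lceil T/t_0^\ast\rceil$ times then covers $[s,T]$.

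The main obstacle is showing that the horizon $t_0^\ast$ can indeed be taken uniform across iterations. In the proof of Theorem \ref{thmrigorous1} the time $t_0$ is selected through two continuity requirements: (i) $\Arrowvert\FF^\eps(t)-\FF_0\Arrowvert_\mathcal{X}<\delta_1<\delta_0$ on $[0,t_1]$, and (ii) $\Arrowvert\oFF^\eps(t)-\oFF(t)\Arrowvert_\mathcal{X}<\frac{1}{16}\frac{2\eps_0}{2\eps_0+E_0}$ on $[0,t_0]\subseteq[0,t_1]$. For (ii), I would use that the formulas \pef{newsystem1} yield $\partial_t\oF$ bounded in $L^1_2$ and $\partial_t\ol$ bounded in $\mathbb{R}$ by constants depending only on $\kappa_0,E_0,\kappa_4,\eps_0,C_0$, so the modulus of continuity of $\oFF$ is time-independent; the same holds for $\oFF^\eps$ via the analogous system \pef{step210} and the a priori bounds derived in Step 4. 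For (i), the apparent obstacle is the singular factor $1/\eps$ in \pef{epsequation}; however, at the restart $\FF^\eps(t_0^\ast)$ is already $O(\eps)$-close to $\mathcal{M}_+$ in the decomposition $\FF^\eps=\oFF^\eps+W_\eps$, and on $\mathcal{M}_+$ the operator $\mathcal{R}$ vanishes, so $\frac{1}{\eps}L_{\oFF^\eps}(W_\eps)$ is of order $1$ uniformly in $\eps$. This renders $\partial_t\FF^\eps$ uniformly bounded in $\mathcal{X}$ for small $\eps$, giving a modulus of continuity independent of $\eps$ and of the iteration index. Consequently $t_0^\ast$ is a purely geometric constant depending only on $\kappa_0,E_0,\kappa_4,\eps_0$, and the finite iteration closes the argument.
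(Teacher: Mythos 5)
Your overall plan — iterate the convergence argument finitely many times over a uniform horizon — is the right idea and is what the paper does as well, and your observations about the uniform $L^1_4$ bound, the $\eps$-independent modulus of continuity, and hence a uniform restart horizon $t_0^\ast$ are correct. The gap is in the restart mechanism itself. You propose to re-invoke Theorem \ref{thmrigorous1} at $t_0^\ast$ with $\FF_0\mapsto\oFF(t_0^\ast)$ and $W_0\mapsto\FF^\eps(t_0^\ast)-\oFF(t_0^\ast)$, but the theorem requires $W_0$ to have the orthogonal-space form $W_0=(\alpha_0,-\alpha_0,\theta_0)^\top$ with $\int_{\mathbb{R}^3}\alpha_0=\int_{\mathbb{S}^2}\theta_0$; this structural condition is precisely what guarantees in STEP 1 that the unique decomposition of the perturbed datum returns $\oFF^\eps(0)=\FF_0$, so that $\Arrowvert\oFF^\eps(t)-\oFF(t)\Arrowvert_{\mathcal{X}}$ starts at zero and stays under the ceiling $\tfrac{1}{16}\tfrac{2\eps_0}{2\eps_0+E_0}$ used in STEP 3. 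Your restarted perturbation decomposes as $\FF^\eps(t_0^\ast)-\oFF(t_0^\ast)=W_\eps(t_0^\ast)+\bigl(\oFF^\eps(t_0^\ast)-\oFF(t_0^\ast)\bigr)$, and the second summand is a difference of two manifold elements, not an orthogonal-space vector. When Lemma \ref{parametrization} decomposes the restarted datum, it therefore returns the base point $\oFF^\eps(t_0^\ast)$, not $\oFF(t_0^\ast)$, so the restarted manifold trajectory solves \pef{newsystem1} from the $\eps$-dependent initial datum $\oFF^\eps(t_0^\ast)$ rather than from $\oFF(t_0^\ast)$, and an additional continuous-dependence estimate for \pef{newsystem1} would be required to compare it with the fixed $\oFF$. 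The paper avoids this entirely: it does not invoke the theorem afresh but extends the existing decomposition $\FF^\eps=\oFF^\eps+W_\eps$ past $t_0$ (centering it on $\oFF^\eps(t_0)$, which is where $\FF^\eps$ actually sits), keeps $\oFF$ fixed throughout, and only checks that the STEP 1 smallness conditions survive the extension to $[t_0-\nu,t_0+\nu]$ — which they do, by STEP 5's bound on $\Arrowvert\oFF^\eps-\oFF\Arrowvert_{\mathcal{X}}$ and continuity. A minor secondary point: restarting exactly at $t_0^\ast$ leaves a small uncovered window $(t_0^\ast,t_0^\ast+s')$; the paper sidesteps this by restarting from $t_0-\nu$ so that the new and old intervals overlap.
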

\section{Summary and final comments}
In this paper we considered a simplified model describing the interaction of a gas with monochromatic radiation. We focused on the fast radiation limit, i.e. on the situation when the radiative processes are much more frequent than the collisions between gas molecules. Starting from a scaled version of the initial kinetic system (cf. equation \pef{epsequation}), we formally derived an effective kinetic equation which describes the behavior of the gas-photon system immediately after the initial time. To this end we analyzed the dynamic of the system near the slow manifold of steady states $ \mathcal{M}_+ $. We performed a perturbative expansion, which we called generalized Chapman-Enskog expansion due to its reminiscence to the classical Chapman-Enskog expansion of the Boltzmann equation, and we derived equation \pef{newsystem1}. After having developed a well-posedness theory for the resulting equation (cf. Theorem \ref{infinityexistence1} and Theorem \ref{infinityuniqueness}) we proved rigorously that he solutions $ \FF^\eps $ of \pef{epsequation} converge to the solutions of \pef{newsystem1} as $ \eps\to\infty $. The main technical idea for this is that every vector in a small neighborhood of the manifold of steady states can be written as the sum of a vector in $ \mathcal{M}_+ $ (the projection) and an orthogonal vector (cf. Lemma \ref{parametrization}). Decomposing in this way the sequence $ \FF^\eps $ we concluded the proof of Theorem \ref{thmrigorous1} observing that the sequence of orthogonal vectors converges to zero uniformly for every positive time, while the sequence of projections converges to the solution of the formal derived equation \pef{newsystem1}. 

An interesting question we did not consider in this paper is the long-time behavior of the solutions of \pef{newsystem1}. In the case, when the collision terms and the radiative terms are of the same order, it is possible to show that strong solutions of \pef{kinsystem} converge to the equilibrium whenever $ t\to\infty $. We refer to \cite{tesi}. The crucial result one needs for the proof of this claim, is that the entropy of strong solutions is bounded from above and from below and it is non-increasing, the so called H-Theorem. Also for the kinetic system \pef{newsystem1} we can define an equilibrium. As in the classical Boltzmann equation, at the equilibrium the gas densities is given by a Maxwellian.
\begin{prop}\label{equilibrium}
	The equilibrium solution to the system \pef{newsystem1} is given by the pair $ \begin{pmatrix}e^{-2k\varepsilon_0}\\Ae^{-k\arrowvert v-u\arrowvert^2}\end{pmatrix} $, where $ A,k>0 $ and $ u\in\mathbb{R}^3 $.
\end{prop}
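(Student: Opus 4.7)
The plan is to use the H-theorem (Proposition \ref{prop1}), which provides the entropy dissipation formula from \pef{3.11}. At an equilibrium solution $(\lambda,F)$ one has $\partial_t \lambda = 0$ and $\partial_t F = 0$, hence $\partial_t \mathcal{H}(t) = 0$. Since the entropy dissipation is a sum of two manifestly non-positive integrands (coming from the elastic and nonelastic collision terms), each of them must vanish pointwise almost everywhere. I will exploit the two conditions separately.

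First I would treat the elastic contribution. Vanishing of the elastic integrand means
\begin{equation*}
F(v_3)F(v_4) = F(v_1)F(v_2) \quad \text{whenever } v_1+v_2 = v_3+v_4,\ |v_1|^2+|v_2|^2 = |v_3|^2+|v_4|^2.
\end{equation*}
This is the classical characterization of collision invariants for the Boltzmann equation: $\log F$ must be a linear combination of $1$, $v_i$, $|v|^2$, so $F(v) = A e^{-k|v-u|^2}$ for some $A,k > 0$ and $u \in \mathbb{R}^3$ (positivity of $k$ comes from $F \in L^1_2(\mathbb{R}^3)$). I would invoke this directly, citing the standard argument in, e.g., \cite{cercignani}.

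Next I would insert this Maxwellian into the vanishing of the nonelastic integrand, which reads
\begin{equation*}
F(\ov_1)F(\ov_2) = \lambda F(\ov_3)F(\ov_4) \quad \text{whenever } \ov_1+\ov_2 = \ov_3+\ov_4,\ |\ov_1|^2+|\ov_2|^2 = |\ov_3|^2+|\ov_4|^2 + 2\varepsilon_0.
\end{equation*}
A short computation expanding $|\ov_i - u|^2$ and using conservation of momentum shows that
\begin{equation*}
|\ov_1-u|^2 + |\ov_2-u|^2 - |\ov_3-u|^2 - |\ov_4-u|^2 = |\ov_1|^2+|\ov_2|^2 - |\ov_3|^2 - |\ov_4|^2 = 2\varepsilon_0,
\end{equation*}
so that $F(\ov_1)F(\ov_2) = e^{-2k\varepsilon_0} F(\ov_3)F(\ov_4)$ identically on the nonelastic constraint surface. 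Comparing with the previous identity forces $\lambda = e^{-2k\varepsilon_0}$.

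Finally I would verify the converse: if $F = A e^{-k|v-u|^2}$ and $\lambda = e^{-2k\varepsilon_0}$, then the integrands $F(v_3)F(v_4) - F(v_1)F(v_2)$ and $\lambda F(\ov_3)F(\ov_4) - F(\ov_1)F(\ov_2)$ vanish pointwise on their respective collision manifolds, so $\mathbb{K}_1[F,\lambda](v) = \mathbb{K}_2[F,\lambda](v) = 0$ for every $v$. Substituting this into the operator $T$ of Definition \ref{defL} gives $T_1(\lambda,F) = T_2(\lambda,F) = 0$, confirming that $(\lambda,F)$ is indeed an equilibrium. The only subtle step is the appeal to the classical characterization of collision invariants; the rest is algebraic manipulation using momentum conservation and the energy defect $2\varepsilon_0$ of the nonelastic process.
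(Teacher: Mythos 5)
The paper states Proposition~\ref{equilibrium} without proof, so there is no argument in the paper to compare against; your proposal supplies a proof where the paper gives none. Your approach is correct and is the natural one: at an equilibrium $\partial_t\lambda=\partial_t F=0$ forces $\partial_t\mathcal{H}=0$, and the entropy dissipation formula \pef{3.11} then makes both of its non-positive integrands vanish almost everywhere. Vanishing of the elastic part gives, by the classical collision-invariant characterization, $F(v)=Ae^{-k\arrowvert v-u\arrowvert^2}$; vanishing of the nonelastic part reads $F(\ov_1)F(\ov_2)=\lambda F(\ov_3)F(\ov_4)$ on the nonelastic manifold $\ov_1+\ov_2=\ov_3+\ov_4$, $\arrowvert\ov_1\arrowvert^2+\arrowvert\ov_2\arrowvert^2=\arrowvert\ov_3\arrowvert^2+\arrowvert\ov_4\arrowvert^2+2\eps_0$, and since momentum conservation kills the $u\cdot(\cdot)$ cross terms one gets $\arrowvert\ov_1-u\arrowvert^2+\arrowvert\ov_2-u\arrowvert^2-\arrowvert\ov_3-u\arrowvert^2-\arrowvert\ov_4-u\arrowvert^2=2\eps_0$, hence $\lambda=e^{-2k\eps_0}$. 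The converse check — that a Maxwellian with this $\lambda$ makes both $\mathbb{K}_1[F,\lambda]$ and $\mathbb{K}_2[F,\lambda]$ vanish pointwise, so $T_1=T_2=0$ — is correct and needed, since mere stationarity of \pef{newsystem1} only gives $\mathbb{K}_1+\mathbb{K}_2=0$ and $\int\mathbb{K}_1=0$ rather than $\mathbb{K}_1=\mathbb{K}_2=0$ separately; it is precisely the entropy argument that bridges this gap. The only mild caveat worth mentioning is that you implicitly work in the regime where Proposition~\ref{prop1} applies (bounded fourth moment and the Gaussian lower bound \pef{Assumption}), which is appropriate since the proposition is stated in that setting.
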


Moreover, in the fast radiation limit it is still possible to show that the lower semi-continuity of the entropy yields the upper and lower uniform bound of the entropy for the solution to \pef{newsystem1} as in Definition \ref{solution}. 
\begin{prop}
	Under the assumptions of Theorem \ref{infinityexistence1} the entropy $ \mathcal{H}(t)=\mathcal{H}\left[\lambda, F\right](t) $ of the weak solution $ \left(\lambda, F\right)^\top\in C\left([0,\infty), X\right) $ satisfies
	\begin{equation*}
	K\leq \mathcal{H}(t)\leq \mathcal{H}(0)
	\end{equation*}
	for all $ t\geq 0 $ and for some $ K\in\mathbb{R} $.
\end{prop}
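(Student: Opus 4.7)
The plan is to obtain both bounds by passing to the limit in the corresponding estimates for the cut-off approximants $(\lambda_n,F_n)$ constructed in the proof of Theorem \ref{infinityexistence1}. Recall from that proof that, along a subsequence, $\lambda_{n_k}\to\lambda$ uniformly on $[0,T]$ with $\lambda\in[0,E_0/(2\varepsilon_0+E_0))$ and $F_{n_k}\rightharpoonup F$ weakly in $L^1(\mathbb{R}^3)$ for every $t$, and that $\int F_n\log^+F_n\le C$ uniformly in $n$ and $t$ (cf.\ Lemma \ref{lemcutoff4}). Since the initial datum is the same for each $n$, we also have $\mathcal{H}_n(0)=\mathcal{H}(0)$.

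\textbf{Upper bound.} By Lemma \ref{lemcutoff2} the entropy $\mathcal{H}_n$ of the cut-off solution is non-increasing in $t$, so $\mathcal{H}_n(t)\le\mathcal{H}(0)$ for all $n$ and $t$. I would then pass to the limit term-by-term in the definition of $\mathcal{H}_n(t)$. The finite-dimensional pieces $\frac{\lambda_n}{1-\lambda_n}\log(\lambda_n)+\log(1-\lambda_n)$ converge to the corresponding expression in $\lambda$ by the uniform convergence of $\lambda_n$ (the singularity at $\lambda=1$ is avoided thanks to energy conservation, which keeps $\lambda_n$ bounded away from $1$). The term $\int \lambda_n F_n\log(\lambda_n)\,dv=\lambda_n\log(\lambda_n)\int F_n$ passes to the limit because $\lambda_n\to\lambda$ uniformly and $\int F_n\to\int F$ (mass conservation, or weak $L^1$ convergence tested against $\varphi\equiv 1$). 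For the remaining term $\int(1+\lambda_n)F_n\log(F_n)\,dv$, I would split as
\begin{equation*}
(1+\lambda_n)F_n\log(F_n)=(1+\lambda)F_n\log(F_n)+(\lambda_n-\lambda)F_n\log(F_n),
\end{equation*}
control the second summand by $\|\lambda_n-\lambda\|_\infty\cdot(C+C_E)\to 0$ using the uniform $L^1$ bounds on $F_n\log^\pm F_n$ from Lemma \ref{lemcutoff4}, and then invoke weak lower semicontinuity of the convex functional $G\mapsto \int G\log G\,dv$ on the set of functions with uniformly integrable $G\log^+G$ (this is a classical de la Vallée–Poussin/Dunford–Pettis application, the hypotheses being provided exactly by Lemma \ref{lemcutoff4}). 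Collecting the limits yields $\mathcal{H}(t)\le \liminf_n \mathcal{H}_n(t)\le\mathcal{H}(0)$.

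\textbf{Lower bound.} Here I would mimic the argument of Lemma \ref{lemcutoff3} directly on $(\lambda,F)$, since that proof only uses the conservation of energy, $\lambda\in[0,1)$, and the elementary inequality $x\log x\ge -y+x\log y$ with $y=e^{-|v|^2}$. This gives
\begin{equation*}
\int_{\mathbb{R}^3}F\log F\,dv\ge -E_0-\int_{\mathbb{R}^3}e^{-|v|^2}dv=:-C_E,
\end{equation*}
and the bounds $\lambda\log\lambda\ge -1/e$, $\log(1-\lambda)\ge\log(2\varepsilon_0/(2\varepsilon_0+E_0))$, $\frac{\lambda}{1-\lambda}\log(\lambda)\ge -1$ carry over verbatim. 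Summing the four contributions produces a constant $K$ depending only on $E_0$ and $\varepsilon_0$.

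\textbf{Main obstacle.} The delicate step is the weak lower semicontinuity used for the upper bound: a priori weak $L^1$ convergence does not preserve $\int F\log F$, and one needs the uniform integrability of $F_n\log^+F_n$ (supplied by Lemma \ref{lemcutoff4}) together with a careful handling of the $(1+\lambda_n)$ prefactor through the uniform convergence of $\lambda_n$. Once this is set up, the remaining terms are continuous functions of $(\lambda_n,\int F_n)$ and pose no difficulty.
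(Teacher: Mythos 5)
Your proof is correct and matches the paper's intended approach: the paper itself gives no detailed proof of this Proposition, only the remark preceding it that ``lower semi-continuity of the entropy yields the upper and lower uniform bound,'' and your argument fills in exactly these details — passing the monotone entropy bound $\mathcal{H}_n(t)\leq\mathcal{H}(0)$ of the cut-off approximants through the weak $L^1$ limit for the upper bound, and repeating the pointwise estimates of Lemma \ref{lemcutoff3} for the lower bound using energy conservation of the weak solution (which is available since $F\in L^\infty([0,T],L^1_4)$). One minor imprecision worth tightening: uniform integrability of $F_n\log^+F_n$ by itself is not what makes $G\mapsto\int G\log G$ weakly lower semicontinuous. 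The clean route is to apply weak $L^1$ lower semicontinuity to the non-negative convex integrand $G\log G+G\arrowvert v\arrowvert^2+e^{-\arrowvert v\arrowvert^2}$, and then subtract off the linear contributions $\int F_n\arrowvert v\arrowvert^2\to\int F\arrowvert v\arrowvert^2$ (available since $\varphi=\arrowvert v\arrowvert^2$ is admissible in \pef{existence3}) and the constant $\int e^{-\arrowvert v\arrowvert^2}$; this is the mechanism your reference to de la Vallée--Poussin is implicitly invoking, and it is worth making explicit because $\int G\log G$ alone is not bounded below by an integrable function on $\mathbb{R}^3$.
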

However, in order to have also for the weak solutions a version of the H-theorem as in Lemma \ref{lemcutoff1}, we shall pass to the limit in the equation for the dissipation of the cut-off approximating solutions. Unfortunately, the weak $ L^1 $-convergence is not strong enough for that.

Nonetheless, for strong solutions to special bounded kernels the long-time convergence to the equilibrium can be proved. Indeed, we can use the theory developed for the cut-off system in Section 3.
For given positive constants $ \kappa_0 $, $ \kappa_2 $ and $ \kappa_4 $ we can consider the set 
\begin{equation*}
\Omega:=\left\{ \left(\lambda, F\right)\in X: \lambda\in[0,1),\; F\geq 0,\; \Arrowvert F\Arrowvert_{L^1}\leq \kappa_0,\;E\left(\lambda,F\right)\leq \kappa_2\text{ and }\Arrowvert F\Arrowvert_{L_4^1}\leq\kappa_4\right\}.
\end{equation*}
We define $ M $ to be the $ M\in\mathbb{N} $ as in Lemma \ref{lemexistence} such that the solution $ F_M $ of the $ M $-cut-off kinetic system for the initial data $ (\lambda_0,F_0)\in\Omega $ satisfies $ \Arrowvert F_M\Arrowvert_{L_4^1}\leq C $ for all $ t\geq 0 $ (C depens only on  initial energy, mass and $ L^1_4$-norm). 
We can consider the kinetic equation \pef{newsystem1} with the bounded cut-off kernels as in \pef{cutoff1}, \pef{cutoff2} and \pef{cutoff3} for $ n=M $. As a consequence of Lemma \ref{lemcutoff5} and Theorem \ref{infinityexistence1} and adapting the proof of Theorem 3.6 in \cite{arkerydII} we can show that the solution $ \left(\lambda, F\right) $ converge to the unique equilibrium as $ t\to\infty $.
\begin{theorem}
	Let $ (\lambda_0, F_0)\in\Omega $, $ F_0\log(F_0)\in L^1\left(\mathbb{R}^3\right) $. Assume $ F_0 $ satisfies assumption \pef{Assumption}. Let $ (\lambda, F)\in C^1\left([0,\infty), X\right) $ be the unique strong solution of this kinetic equation with bounded kernels. Then $ (\lambda, F) $ converges to the unique equilibrium solution $ \begin{pmatrix}e^{-2k\varepsilon_0}\\Ae^{-k\arrowvert v-u\arrowvert^2}\end{pmatrix} $ which conserves the initial mass, momentum and energy. The convergence is pointwise for $ \lambda $ and weak in $ L^1\left(\mathbb{R}^3\right) $ for $ F $.
\end{theorem}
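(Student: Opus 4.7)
The plan is to adapt the classical Arkeryd strategy from \cite{arkerydI,arkerydII} on the homogeneous Boltzmann equation to the coupled pair $(\lambda,F)$. The structural ingredients needed are already available in Section 3 of the paper: the entropy dissipation identity of Lemma \ref{lemcutoff2}, the uniform lower bound on the entropy from Lemma \ref{lemcutoff3}, the uniform $L^1_4$-bound coming from Lemma \ref{lemexistence} applied at $n=M$, and the classification of equilibria from Proposition \ref{equilibrium}. The argument consists of an $\omega$-limit analysis: extract subsequential limits in a suitable weak topology, use the dissipation to show each is an equilibrium, and use the conservation laws (Lemma \ref{lemcutoff1}) together with Proposition \ref{equilibrium} to conclude that the limit is unique, whence subsequential convergence upgrades to full convergence.

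First I would establish weak sequential compactness of the orbit. Conservation of mass, momentum and energy gives $\sup_{t\geq 0}\Arrowvert F(t)\Arrowvert_{L^1_2}\leq \kappa_2$ and $\lambda(t)\in[0,\tfrac{\kappa_2}{2\varepsilon_0+\kappa_2}]$. Combined with $\sup_{t\geq 0}\Arrowvert F(t)\Arrowvert_{L^1_4}\leq C$ (from Lemma \ref{lemexistence} applied at the chosen $M$), this provides tightness of $\{F(t)\}_{t\geq 0}$ in $L^1(\mathbb{R}^3)$, while the uniform bound $\int F\log F\leq C$ from Lemma \ref{lemcutoff4} gives uniform integrability. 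By the Dunford--Pettis theorem, any sequence $t_k\to\infty$ admits a subsequence along which $\lambda(t_k)\to\lambda_\infty\in[0,1)$ and $F(t_k)\rightharpoonup F_\infty$ weakly in $L^1(\mathbb{R}^3)$.

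Next I would use the entropy to identify $(\lambda_\infty,F_\infty)$ as an equilibrium. Since $\mathcal{H}$ is non-increasing (Lemma \ref{lemcutoff2}) and bounded below (Lemma \ref{lemcutoff3}), it has a finite limit, so the nonnegative dissipation $D(t):=-\partial_t\mathcal{H}(t)$ is integrable on $[0,\infty)$. Hence there is a sequence $t_k\to\infty$ with $D(t_k)\to 0$. Since the cut-off kernels are bounded and the integrands in $D(t)$ have the sign-definite form $(ab-cd)\log(ab/cd)\geq 0$, one passes to the weak-$L^1$ limit via lower semicontinuity of convex functionals in the spirit of Arkeryd's proof of the H-theorem: this yields $F_\infty(v_3)F_\infty(v_4)=F_\infty(v_1)F_\infty(v_2)$ a.e.\ on the elastic manifold and $\lambda_\infty F_\infty(\ov_3)F_\infty(\ov_4)=F_\infty(\ov_1)F_\infty(\ov_2)$ a.e.\ on the nonelastic manifold. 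The first identity gives the Maxwellian form $F_\infty=A\,e^{-k|v-u|^2}$; substituting into the second identity and using the relation $|\ov_1|^2+|\ov_2|^2=|\ov_3|^2+|\ov_4|^2+2\varepsilon_0$ forces $\lambda_\infty=e^{-2k\varepsilon_0}$, matching Proposition \ref{equilibrium}.

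The conservation laws pass to the weak limit thanks to the uniform $L^1_4$-bound, which fixes $(A,k,u)$ uniquely in terms of the initial mass, momentum and energy. Therefore $(\lambda_\infty,F_\infty)$ is the same for every subsequence, which upgrades subsequential convergence to full convergence as $t\to\infty$. For $\lambda$, the bound $|T_{1,M}(\lambda,F)|\leq C$ (as in \pef{existence1}) gives uniform Lipschitz continuity of $t\mapsto\lambda(t)$, which together with the uniqueness of subsequential limits yields pointwise convergence. The main technical obstacle is justifying the weak-limit passage in the dissipation functional, in particular the nonelastic piece, whose integration carries the degenerate $\delta$-measure supported on the collision manifold $\Omega_M$: here one exploits both the boundedness of the cut-off kernels on $\Omega_M$ and the uniform $L\log L$ control of $F(t)$ to justify a truncation-plus-lower-semicontinuity argument, following the framework of \cite{arkerydI,arkerydII}, suitably extended to handle the coupling through $\lambda$.
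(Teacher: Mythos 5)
Your strategy is the same one the paper intends: the text around the theorem points explicitly to ``adapting the proof of Theorem 3.6 in \cite{arkerydII}'', and your outline -- weak compactness of the orbit from the uniform $L^1_4$ and $L\log L$ bounds, vanishing of entropy dissipation, classification of equilibria, rigidity from the conservation laws, then uniqueness of subsequential limits -- is precisely the Arkeryd scheme applied to the coupled pair $(\lambda,F)$ with cut-off kernels. The verification that substituting the Maxwellian into the nonelastic rigidity relation forces $\lambda_\infty=e^{-2k\varepsilon_0}$ is correct, and so is the Lipschitz argument upgrading subsequential convergence of $\lambda$ to pointwise convergence.

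The one place where your sketch is imprecise is the identification step. You choose $t_k$ with $D(t_k)\to 0$ and then appeal to ``lower semicontinuity of convex functionals'' to conclude $D[\lambda_\infty,F_\infty]=0$. The dissipation integrand $g(a,b)=(a-b)(\log a-\log b)$ is indeed jointly convex, but lower semicontinuity of $\int g\bigl(F(v_1)F(v_2),F(v_3)F(v_4)\bigr)$ under weak $L^1$ convergence of $F(t_k)$ requires the tensor product $F(t_k)\otimes F(t_k)\rightharpoonup F_\infty\otimes F_\infty$ weakly in $L^1(\mathbb{R}^6)$, which does \emph{not} follow automatically from $F(t_k)\rightharpoonup F_\infty$ in $L^1(\mathbb{R}^3)$ -- tensor products of weakly converging sequences need not converge to the tensor product of the limits. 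Arkeryd's actual mechanism avoids this: one translates the solution, $F_k(s)=F(t_k+s)$, extracts a limit in $L^1\bigl((0,1)\times\mathbb{R}^3\bigr)$, shows the limit is itself a solution (using the bounded kernels and the equicontinuity in $t$ that you already have from the Lipschitz estimate on $T_n$), and then uses $\int_{t_k}^{t_k+1}D\,dt\to 0$ together with the constancy of $\lim\mathcal{H}(t_k+s)$ in $s$ to deduce that the limit has zero dissipation almost everywhere. You flag this as ``the main technical obstacle'', which is fair, but the fix is not a truncation of the nonelastic kernel -- the cut-off kernels are already bounded on $\Omega_M$ -- it is the passage to time-translated solutions that makes the weak limit of the quadratic terms tractable. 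Apart from that one step, your proposal matches the paper's intended route.
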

\nocite{*}
\bibliographystyle{siam}
\bibliography{bibliography}

\bigskip

\hspace{-18,5pt}
\textsc{Elena Demattè}\\
University of Bonn, Institute for Applied Mathematics\\
Endenicher Allee 60\\
D-53115 Bonn\\
GERMANY\\
\textit{E-mail address}: dematte@iam.uni-bonn.de
\end{document}